\newcommand{\eq}{\begin{equation}}
\newcommand{\en}{\end{equation}}
\newcommand{\norm}[1]{\left\lVert #1 \right\rVert}
\newcommand{\abs}[1]{\left\lvert #1 \right\rvert}
\newcommand{\iprod}[1]{\left\langle #1 \right\rangle }
  \newcommand{\eps}{\epsilon}
  \newcommand{\ea}{\begin{eqnarray*}}
  \newcommand{\ee}{\end{eqnarray*}}
  \newcommand{\nea}{\begin{eqnarray}}
  \newcommand{\nee}{\end{eqnarray}}
  \newcommand{\Aa}{\mathcal{A}}
  \newcommand{\Gg}{\mathcal{G}}
  \newcommand{\Ww}{\mathcal{W}}
  \newcommand{\Cc}{\mathcal{C}}
  \newcommand{\C}{\mathbf{C}}
  \newcommand{\Y}{\mathbf{Y}}
  \newcommand{\RR}{\mathbb{R}}
  \newcommand{\NN}{\mathbb{N}}
  \newcommand{\PP}{\mathbb{P}}
  \newcommand{\ZZ}{\mathbb{Z}}
  \newcommand{\limn}{\lim_{n\to\infty}}
  \newcommand{\var}{\mathop{\mathrm{var}}\nolimits}
  \renewcommand{\subset}{\subseteq}
  \newcommand{\E}{\mathbf{E}}
  \newcommand{\Poi}{\mathrm{Poi}}
  \newcommand{\eqd}{\,{\buildrel \mathcal{L} \over =}\,} 
  \newcommand{\toL}{\,{\buildrel \mathcal{L} \over \longrightarrow}\,} 
  \newcommand{\toPr}{\,{\buildrel pr \over \longrightarrow}\,} 
  \newcommand{\dtv}{d_{TV}}
  \renewcommand{\P}{\mathbf{P}}
  \renewcommand{\var}{\mathbf{Var}}
\newcommand{\weight}{\underline{\omega}}
\newcommand{\ltwo}{\mathbf{L}^2}
\newcommand{\Ent}{\text{Ent}}
\newcommand{\expect}{\mathbf{E}}
\newcommand{\expec}{\mathbf{E}}
\newcommand{\R}{\mathbb{R}}
\newcommand{\prob}{\mathbf{P}}
\newcommand{\one}{\mathbf{1}}
\newcommand{\oneL}[2]{L_{[#1,#2]}}
\begin{document}

\theoremstyle{plain}
\newtheorem{thm}{Theorem}
\newtheorem{lemma}[thm]{Lemma}
\newtheorem{prop}[thm]{Proposition}
\newtheorem{cor}[thm]{Corollary}

\theoremstyle{definition}
%%%%% 20110916a Toby made definitions be numbered with theorems
\newtheorem{defn}[thm]{Definition}
\newtheorem{asmp}{Assumption}
\newtheorem{notn}{Notation}
\newtheorem{prb}{Problem}

\theoremstyle{remark}
\newtheorem{rmk}[thm]{Remark}
\newtheorem{exm}{Example}
\newtheorem{clm}{Claim}

  \newcommand{\NBW}[2][\infty]{\mathrm{NBW}_{#2}^{(#1)}}
  \newcommand{\BadW}[2][\infty]{B_{#2}^{(#1)}}
  \newcommand{\GoodW}[2][\infty]{G_{#2}^{(#1)}}
  \newcommand{\N}[2][\infty]{N_{#2}^{(#1)}}
  \newcommand{\Nc}[2][\infty]{\widetilde{N}_{#2}^{(#1)}}
  \newcommand{\Cy}[2][\infty]{C_{#2}^{(#1)}}
  \newcommand{\CNBW}[2][\infty]{\mathrm{CNBW}_{#2}^{(#1)}}
  \newcommand{\Er}[2][\infty]{E_{#2}^{(#1)}}
  \newcommand{\sCNBW}[2][\infty]{\mathrm{\widetilde{CNBW}}_{#2}^{(#1)}}
  \newcommand{\X}[2][\infty]{X_{#2}^{(#1)}}

\title[Limit theorems]{Functional limit theorems for random regular graphs}

%%% 20110917 Toby spent too much time reading the documentation
%%% for amsart and followed its instructions to the letter
\author{Ioana Dumitriu}
\address{Department of Mathematics\\ University of Washington\\ Seattle, WA 98195}
\email{dumitriu@math.washington.edu}
\author{Tobias Johnson}
\address{Department of Mathematics\\ University of Washington\\ Seattle, WA 98195}
\email{toby@math.washington.edu}
\author{Soumik Pal}
\address{Department of Mathematics\\ University of Washington\\ Seattle, WA 98195}
\email{soumikpal@gmail.com}
\author{Elliot Paquette}
\address{Department of Mathematics\\ University of Washington\\ Seattle, WA 98195}
\email{paquette@math.washington.edu}

\keywords{Random regular graphs, sparse random matrices, Poisson approximation, linear eigenvalue statistics, infinitely divisible distributions}

\subjclass[2000]{60B20, 05C80}

\thanks{Soumik's research is partially supported by NSF grant DMS-1007563.
Ioana, Elliot, and Toby acknowledge support from the NSF by means of
CAREER Award DMS-0847661.  Toby's research is also
supported by the ARCS Foundation.}

\date{\today}

\begin{abstract} Consider $d$ uniformly random permutation matrices on
  $n$ labels. Consider the sum of these matrices along with their
  transposes. The total can be interpreted as the adjacency matrix of
  a random regular graph of degree $2d$ on $n$ vertices. We consider
  limit theorems for various combinatorial and analytical properties
  of this graph (or the matrix) as $n$ grows to infinity, either when
  $d$ is kept fixed or grows slowly with $n$. In a suitable weak
  convergence framework, we prove that the (finite but growing in
  length) sequences of the number of short cycles and of cyclically
  non-backtracking walks converge to distributional limits. 
%% Soumik removed sharp.  Toby changed cyclical to cyclically.
 We estimate the total variation distance from the limit using Stein's
 method. As an application of these results we derive limits of linear
  functionals of the eigenvalues of the adjacency matrix. 
  %%% 20110917 Toby corrected accent on Szemer\'edi
  A key step in this latter derivation is an extension of the Kahn-Szemer\'edi argument for estimating the second largest eigenvalue for all values of $d$ and $n$.  
  %Soumik changed the sentences a bit so that the paragraph does not end with a single word on a new line.
\end{abstract}

\maketitle

\section{Introduction}
 We consider several asymptotic enumeration and analytic problems for sparse random regular graphs and their adjacency matrices.
 %Toby changed its to their
%Ioana introduced lots of commas and semicolons galore to improve the
%reading flow, and occasionally changed "a" to "the" =)
  A graph is called regular if every vertex has the same degree; a
  sparse regular graph is typically one for which the degree $d$ is
  either constant or of a far smaller order than the number of
  vertices $n$.  A classical model is the uniform distribution over
  all $d$-regular graphs on $n$ labeled vertices; a thorough survey on
  properties of the uniform model can be found in  \cite{W}. 

Our model of choice is the more recent permutation model: Consider $d$
many iid uniformly random permutations $\{ \pi_1, \ldots, \pi_d \}$ on
$n$ vertices labeled $\{1,2,\ldots,n\}$. A graph can be constructed by
adding one edge between each pair $(i, \pi_j(i))$; thus
every vertex
%%%20120516 Toby changed ``has two edges \pi_j(i) and ...'' to ``has edges to''
%     on the grounds that \pi_j(i) is not an edge   
%     Also, added an explanation of how to deal with self-loops
$i$ has edges to $\pi_j(i)$ and $\pi^{-1}_j(i)$ for every permutation
$\pi_j$, for a total degree of $2d$.
 As the reader will note, this
allows multiples edges and self-loops, with each self-loop contributing
two to the degree of its vertex.  However, one can still ask the
usual enumeration questions about this graph, e.g., the distribution of the number of cycles.   
 
%%%20120516 Toby changed ``to consider'' to ``by''
%% and clarified how to deal with self-loops
Another way to represent this graph is by its adjacency
matrix, which is an $n\times n$ matrix whose $(i,j)$th entry is the
number of edges between $i$ and $j$, with self-loops counted twice. 
This random matrix can be now
studied in it own right; for example, one can ask about the
%%% 20110916a Toby changed the en-dash to an em-dash
distribution of its eigenvalues. Note that---trivially---the top
eigenvalue is $2d$; the distribution of the rest of the eigenvalues is an interesting question. For the uniform model of random regular graphs (or Erd\H{o}s-R\'enyi  graphs) such questions have been studied since the pioneering work \cite{mckay81}. Among the more recent articles, see  \cite{FeigeOfek}, \cite{TVW}, and \cite{DP}. We refer the reader to \cite{DP} for a more exhaustive review of the vast related literature. 

Our results touch on both aspects. We consider two separate scenarios,
either when $d$ is independent of $n$, or when $d$ grows
\textit{slowly} with $n$.  We will assume throughout that $d\ge 2$;
the reason for this is that the $d=1$ case has been dealt with (in a
larger context) by \cite{BAD}. %added by Ioana
 
The paper is divided into three thematically separate but mathematically dependent parts. 
\bigskip

%%% 20120507 Toby changed this section to respond to the associate editor's
%%%    criticisms
\noindent(i) Section \ref{sec:poisson}: \textbf{Joint asymptotic
  distribution of a growing sequence of short cycles.} It is
%%%20120516 Toby inserted ``of random regular graphs'' into next line
%%% and also removed a hyphen
  well known in the classical models of random regular graphs
that the number of cycles of length
$k$, where $k$ is \textit{small} (typically logarithmic in $n$), is
approximately Poisson. See
\cite{bollobasbook} or \cite{W} for an account of older results, or 
\cite{mww04} for the best result in this direction.
In Theorem~\ref{thm:multipoiquant}, we prove this fact for the permutation
model, using
Stein's method along with ideas from \cite{LP} to estimate the total
variation distance between a vector of the number of 
cycles of lengths $1$ to $r$ and a corresponding 
vector of independent Poisson random variables.  This theorem
holds for nearly 
the same regime of $r$, $d$, and $n$ as in \cite[Theorem~1]{mww04},
and unlike that theorem gives an explicit error bound on the approximation.  
This bound
is essential to our analysis of eigenvalue statistics
in Section~\ref{sec:linear}.

 The mean number of cycles
is somewhat interesting. When $d$ is fixed, for the uniform model of random 
%%%20120516 Toby changed regular to 2d-regular
$2d$-regular graphs, the limiting mean of the number of short cycles of length $k$ is $(2d-1)^k/2k$. For the permutation model, the limiting mean is the slightly different quantity $a(d,k)/2k$, where
\[
a(d,k)=\begin{cases}
(2d-1)^{k} - 1 + 2d,& \text{when $k$ is even},\\
(2d-1)^{k} +1,& \text{when $k$ is odd}.
\end{cases}
\] 
%20110927 Toby added a reference to McKay's paper
%Ioana added a few prepositions, 09/28
See also \cite[Theorem~4.1]{LMMW}, in which the authors consider a
different model of random regular graph and find that the limiting mean
number of cycles of length $k$ differs slightly from both of these.

Next we consider the number of short non-backtracking walks on the
graph; a non-backtracking walk is a closed walk that never follows an
edge and immediately retraces that same edge backwards. We actually
%%%20120516 Toby added a comma in the next line
consider cyclically non-backtracking walks (CNBWs), whose definition
will be given in  Subsection~\ref{sec:nbw}. Non-backtracking walks are
important in both theory and practice as can be seen from the articles
\cite{friedmanalon} and \cite{ABLS}. We consider the entire vector
of cyclically non-backtracking walks of lengths $1$ to $r_n$, where
$r_n$ is the ``boundary length'' of short walks/cycles, and is growing to infinity with $n$. In Theorem \ref{thm:dfixedtight}, we assume that $d$ is independent  of $n$. We prove that the vector of CNBWs, as a random sequence in a weighted $\ell^2$ space, converges weakly to a limiting random sequence whose finite-dimensional distributions are linear sums of independent Poisson random variables.

%%%20120516 Toby removed reference to the regime of MWW04
When $d$ grows slowly with $n$ (slower than any fixed power of $n$,
which is the same regime studied in \cite{DP}), a
corresponding result is proved in Theorem \ref{thm:dgrowstight}. Here,
we \textit{center} the vector of CNBW for each $n$. The resulting
random sequence converges weakly to an infinite sequence of independent, centered normal random variables with unequal ($\sigma_k^2=2k$) variances. 

\bigskip

%Ioana moved some stuff around
%%% 20120427 Elliot attempted to address the editor's remarks.  Thoughts??
%%% 20120507 Toby edited the first sentence, the last sentence, and one
%%%    in the middle about contiguity.  Also, he changed Part (iii) to
%%%    Section 5, on the grounds that it's a little bit less ambiguous.
\noindent(ii) Section \ref{sec:eval2}: \textbf{An estimate of
  $C\sqrt{2d-1}$ for the second largest (in absolute value) eigenvalue
  for  any $(d,n)$.} The spectral gap of the permutation model, for
fixed $d$, has been intensely studied recently in \cite{friedmanalon}
for the resolution of the Alon conjecture. This conjecture states that
the second largest eigenvalue of `most random regular graphs' of
degree $2d$ is less than $2\sqrt{2d-1} + \epsilon$; the assumption is
that $d$ is kept fixed while $n$ grows to infinity. This important conjecture implies that `most' sparse random
regular graphs are nearly Ramanujan (see \cite{LPS}). Friedman's work
%%%20120516 Toby removed the word by from following line
%%% and made order chronological, and added a reference to contiguity
%%% results
builds on earlier work \cite{furedikomlos}, \cite{broder_shamir}, and 
\cite{friedmane2}. Although \cite{friedmanalon} and related works consider the permutation model, for fixed $d$, their results also apply to other models due to various contiguity results; see \cite[Section~4]{W} and \cite{GJKW}.

To develop the precise second eigenvalue control that we require in 
Section~\ref{sec:linear},
we have followed a line of reasoning that originates with Kahn and Szemer\'edi \cite{FKSz}.
This approach has been used recently to great effect by \cite{BFSU}, \cite{FeigeOfek}, and
\cite{LSV}, to name a few.  With this technique we are able to show 
that the second largest eigenvalue is bounded by $40000\sqrt{2d-1}$
with a probability at least $1- Cn^{-1}$ for some universal constant $C$ 
(see Theorem \ref{thm:eigbound}).  We have not attempted to find an optimal constant, and
instead we focus on extricating the $d$ and $n$ dependence in the bound.

%%% 20120512 -- Elliot -- Added two sentences to cover a little more of BFSU and LSV. 
Both
\cite{BFSU} and \cite{LSV} provide examples of how the Kahn-Szemer\'edi argument can
be used to control the second eigenvalue when $d$ grows with $n$.  In \cite{BFSU},
the authors work in the configuration model to obtain the $O(\sqrt{d})$ bound for $d = O(\sqrt{n}),$ essentially the largest $d$ for which the configuration model represents the uniform $d$-regular graph well enough to prove eigenvalue concentration.
In \cite{LSV}, the authors study the spectra of random covers.  The permutation model is an
example of such a cover, where the base graph is a single point with $d$ self loops.  Using the
Kahn-Szemer\'edi machinery, they are able to show an $O(\sqrt{d} \log d)$ bound with $d(n) = \text{poly}(n)$ growth.  The adaptations to the original Kahn-Szemer\'edi argument made in \cite{LSV}, especially the usage of Freedman's martingale inequality, are similar to the ones made here.  However, as we do not need to consider the geometry of the base graph, we are able to push this argument to prove a non-asymptotic bound of the correct order.

%%% ALTERNATIVE VERSION
%We originally learned from Joel Friedman
%and Van Vu that it might be possible to extend the original Kahn-Szemer\'edi
%\cite{FKSz} argument for the permutation model to arbitrary $d$ and $n$.  Further, both
%\cite{BFSU} and \cite{LSV} provided strong evidence that this should be the case.  In \cite{BFSU},
%the authors work in the uniform model to obtain the $O(\sqrt{d})$ bound for $d = O(\sqrt{n}).$
%In \cite{LSV}, the authors study the spectra of random covers.  The permutation model is an
%example of such a cover, where the base graph is a single point with $d$ self loops.  Using the
%Kahn-Szemer\'edi machinery with similar modifications, they are able to show $O(\sqrt{d} \log d)$
%with $d(n) = \text{poly}(n)$ growth.

%%Ioana changed intro to reflect what we've been told more fully
%%% 20110916a Toby uncontracted the contraction
%%Unfortunately, as far as we have been able to determine, the literature
%%does not contain tight estimates on the second eigenvalue that hold
%%for arbitrary $d$ and $n$. 
% Soumik broke the long sentence 09/14
%%We learned from Joel Friedman
%20110927 Toby added a reference to Lubetzky's paper
%%and Van Vu that it might be possible to extend the Kahn-Szemer\'edi
%%\cite{FKSz} argument for all $d$ and $n$. Since Part (iii) of our
%%article required such an estimate (which is thus of independent
%%interest), we have obtained it in Theorem \ref{thm:eigbound}.   A similar argument was used recently in \cite{LSV} to bound
%%the second eigenvalue of a random lift of a graph.
%Soumik removed most for aesthetic reasons

\bigskip

\noindent(iii) Section \ref{sec:linear}: \textbf{Limiting distribution of linear eigenvalue statistics of the rescaled adjacency matrix.} Our final section is in the spirit of Random Matrix Theory (RMT). Let $A_n$ denote the adjacency matrix of a random regular graph on $n$ vertices. 
By linear statistics of the spectrum we mean random variables of the type $\sum_{i=1}^n f(\lambda_i)$, where $\lambda_1 \ge \ldots \ge \lambda_n$ are the $n$ eigenvalues of the symmetric matrix $(2d-1)^{-1/2}A_n$. We do this rescaling of $A_n$ irrespective of whether $d$ is fixed or growing so as to keep all but the first eigenvalue bounded with high probability.  

%Ioana changed this paragraph
The limiting distribution of linear eigenvalue statistics for various
RMT models such as the classical invariant ensembles or the
%%% 20110916a Toby changed have to has
Wigner/Wishart matrices has been (and continues to be) widely
studied. For the sake of space, we give here only a brief (and
therefore incomplete) list of methods and papers which study
the subject. For a more in-depth review, we refer the reader to \cite{AGZ}. 

The first, and still one of the most widely used methods of approach
is the method of moments, introduced in \cite{wigner55a}, used in
\cite{jonsson82a} and perfected in \cite{sosh_sinai} for Wigner
%%%% 20110916a Toby inserted the word ``it''
matrices (it also works for Wishart);  this method
is also used here in conjunction with other tools. Explicit moment calculations
alongside Toeplitz determinants have also been used in determining the
linear statistics of circular ensembles \cite{szego52}, \cite{DE}, \cite{johansson88}.

Other methods include the Stieltjes transform method (also known as the method of resolvents),
which was employed with much success in a series of papers of which we
mention \cite{bai_silverstein04}  and \cite{LP09}; the (quite analytical)
method of potentials, which works on  a different class of
random matrices including the Gaussian Wigner ones
%%%20120516 Toby changed a comma to a semicolon in following line
\cite{johansson_clt}; stochastic calculus \cite{cabanal_duvillard01};
and free probability \cite{KMS07}. Finally, a completely different set of techniques were explored in \cite{Cha09}. 

Recently and notably, for a single permutation matrix,  such a study
has been approached in \cite{W00} and completed in \cite{BAD}; our results share
several features with the latter paper.

A noteworthy aspect in all these is that when the function $f$ is smooth enough
(usually analytic),
the variance of the random
variables $\sum_{i=1}^n f(\lambda_i)$ typically remains bounded. This is
%%% 20120515 Toby added a citation and the word ``typically'' 
%%% to address reviewer's comment
attributed to eigenvalue \textit{repulsion}; see \cite[Section~21.2.2]{BAG} for
further discussion. Even more interestingly,
there is no process convergence of the cumulative distribution
function. This can be guessed from the fact that when the function $f$
is rough (e.g., the characteristic function of an interval), the
variance of the linear statistics grows slowly with $n$
(as seen for example in \cite{CL} and \cite{soshnikov00}). 
One major difference our models have with the classical ensembles is
that our matrices are sparse; their sparsity affects the behavior of the limit.       

% following para modified by Soumik on 20110830
% and again by Ioana on Sept 7
In Theorems \ref{thm:dfixedlinear} and \ref{thm:dgrowslinear} we prove
limiting distributions of linear eigenvalue statistics. For fixed $d$,
the functions we cover are those that are analytically continuable to
a large enough ellipse containing a compact interval of spectral
support. When $d$ grows we need functions that are slightly more
smooth. Let $(T_k)_{k\in \NN}$ be the Chebyshev polynomials of the
first kind on a certain compact interval; since they constitute a basis for $\ltwo$ functions,
any such function admits a decomposition in a Fourier-like series
expressed in terms of the Chebyshev polynomials. The required smoothness is characterized in terms of how quickly
the truncated series converges in the supremum norm to the actual
function on the given interval.

In Theorem \ref{thm:dfixedlinear}, we consider $d$ to be fixed. The
limiting distribution of the linear eigenvalue statistics is a
non-Gaussian infinitely divisible distribution. This is consistent
with the results in \cite{BAD}. Theorem \ref{thm:dgrowslinear} proves
a Gaussian limit in the case of a slowly growing $d$ after we have
appropriately centered the random variables. This transition is
expected. In \cite{DP} the authors consider the uniform model of
random regular graphs and show that when $d$ is growing slowly, the
spectrum of the adjacency matrix starts resembling that of a real
symmetric Wigner matrix. Similar techniques, coupled with estimates
proved in this paper, could be used to extend such results to the present model.  

The proofs in this section follow easily from the results in parts (i) and (ii) above. As in \cite{DP}, the proofs display interesting combinatorial interpretations of analytic quantities common in RMT.

%%% 20120512 -- Elliot -- Acknowledge the editors
\section*{Acknowledgment} The authors gratefully acknowledge several
useful discussions with G\'erard Ben Arous, Kim Dang,  Joel Friedman, and Van Vu. In particular, they thank G\'erard for sharing the article \cite{BAD}. Joel and Van have generously pointed us toward the techniques in \cite{FKSz} that we use in Section~\ref{sec:eval2}.  Additionally, we would like thank the referees and the associate editor for many insightful comments.

%Rewritten by Soumik

\section{A weak convergence set-up}\label{sec:weakconvergence} The
following weak convergence set-up will be used to prove the limit
theorems in the later text. Let $\weight:=(\omega_m)_{m\in \NN}$ be a
sequence of positive \textit{weights} that decay to zero at a suitable
rate as $m$ tends to infinity. Let $\ltwo(\weight)$ denote the space
of sequences $(x_m)_{m \in \mathbb{N}}$ that are square-integrable
%%%20120516 Toby changed w in following line to \omega
with respect to $\weight$, i.e., $\sum_{m=1}^{\infty} x_m^2 \omega_m <
\infty$. Our underlying complete separable metric space will be
$X=(\ltwo(\weight), \norm{\cdot})$, where $\norm{\cdot}$ denotes the
usual norm. 

\begin{rmk} Although we have chosen to work with $\ltwo$ for
  simplicity, any $\mathbf{L}^p$ space would have worked as well. 
\end{rmk}

Let us denote the space of probability measures on the Borel
$\sigma$-algebra of $X$ by $\PP(X)$. We will skip mentioning the Borel
$\sigma$-algebra and refer to a member of $\PP(X)$ as a probability
measure on $X$. We equip $\PP(X)$ with the Prokhorov metric for weak
convergence; for the standard results on weak convergence we use below, please consult Chapter 3 in \cite{EK}. Let $\rho$ denote the Prokhorov metric on $\PP(X) \times \PP(X)$ as given in \cite[eqn. (1.1) on page 96]{EK}. 

\begin{lemma}
The metric space $(\PP(X), \rho)$ is a complete separable metric space. 
\end{lemma}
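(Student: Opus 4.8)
The plan is to reduce the statement to the classical fact that the space of Borel probability measures on a Polish space, metrized by the Prokhorov metric, is itself a Polish space; this is precisely one of the ``standard results on weak convergence'' collected in \cite[Chapter~3]{EK} (the relevant theorem sits alongside the definition of $\rho$ on page~96 cited above). Thus the one thing that genuinely needs to be checked is that the underlying space $X=(\ltwo(\weight),\norm{\cdot})$ is itself a complete separable metric space.

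To see that, note that since every weight $\omega_m$ is strictly positive, the coordinatewise rescaling $\Phi\colon(x_m)_{m\in\NN}\mapsto(\sqrt{\omega_m}\,x_m)_{m\in\NN}$ is a linear bijection of $\ltwo(\weight)$ onto the usual sequence space $\ell^2$, and it is an isometry because $\norm{(x_m)}^2=\sum_{m=1}^\infty x_m^2\omega_m=\sum_{m=1}^\infty(\sqrt{\omega_m}\,x_m)^2$. Hence $X$ is isometrically isomorphic to $\ell^2$, which is complete (being a Hilbert space) and separable (the finitely supported sequences with rational entries form a countable dense subset). Transporting these two properties back along $\Phi^{-1}$ shows that $X$ is a complete separable metric space; equivalently, one can argue directly that the finitely supported rational sequences are dense in $X$ and that a $\norm{\cdot}$-Cauchy sequence is coordinatewise Cauchy with uniformly small tails, hence convergent.

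With $X$ now known to be Polish, \cite[Chapter~3]{EK} gives both required conclusions: separability of $(\PP(X),\rho)$, a countable dense set being furnished by the finitely supported convex combinations with rational weights of Dirac masses at the points of a countable dense subset of $X$; and completeness of $(\PP(X),\rho)$, whose proof rests on Prokhorov's theorem identifying $\rho$-relative compactness with tightness, so that any $\rho$-Cauchy sequence of measures is tight, admits a weakly convergent subsequence, and therefore converges. I do not expect a real obstacle here: the substantive content lies entirely in the cited theorem, and once Polishness of $X$ is recorded the proof is a one-line invocation of it. The only point at which a self-contained treatment would demand work is the completeness half, which ultimately needs Prokhorov's theorem (a tightness/diagonalization argument); but since the paper has already elected to cite \cite{EK} for exactly this circle of ideas, the cleanest route is to invoke the corresponding statement there.
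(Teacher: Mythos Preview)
Your proposal is correct and follows essentially the same approach as the paper: the paper's proof is a one-line invocation of \cite[Thm.~1.7, p.~101]{EK}, taking for granted that $X$ is a complete separable metric space. You have simply added the verification that $X=\ltwo(\weight)$ is Polish (via the isometry with $\ell^2$), which the paper leaves implicit.
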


\begin{proof}  %%% 20110916a Toby addded some ~'s
The claim follows from \cite[Thm.~1.7, p.~101]{EK} since $X$ is a complete separable metric space. 
\end{proof}

\bigskip

To prove tightness of subsets of $\PP(X)$ we will use the following class of compact subsets of $\ltwo(\weight)$.

%%% 20110916a Toby corrected a Latex issue on the following line
\begin{lemma}[The infinite cube]\label{lemma:hilbertcube} Let $(a_m)_{m\in \NN} \in \ltwo(\weight)$ be such that $a_m \ge 0$ for every $m$. Then the set
\[
\left\{  (b_m)_{m\in \mathbb{N}}\in \ltwo(\weight):\quad 0\le \abs{b_m} \le a_m\quad \text{for all}\quad m \in \NN  \right\}
\]
is compact in $(\ltwo(\weight), \norm{\cdot})$.
\end{lemma}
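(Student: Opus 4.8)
The plan is to show that the given set $K$ is closed and totally bounded (equivalently, sequentially compact), which suffices since $\ltwo(\weight)$ is complete. First I would verify that $K$ is bounded: for any $(b_m) \in K$ we have $\sum_m b_m^2 \omega_m \le \sum_m a_m^2 \omega_m < \infty$, so $K$ sits inside a ball of radius $\norm{(a_m)}$. Next I would show $K$ is closed: if $(b^{(j)}) \to b$ in $\ltwo(\weight)$, then a subsequence converges coordinatewise (since convergence in the weighted norm forces $b^{(j)}_m \to b_m$ for each fixed $m$, as $\omega_m > 0$), and the coordinatewise bounds $\abs{b_m} \le a_m$ pass to the limit, so $b \in K$.

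The main work is establishing sequential compactness. Given a sequence $(b^{(j)})_{j \in \NN}$ in $K$, I would extract a coordinatewise convergent subsequence by a diagonal argument: for each $m$ the scalars $b^{(j)}_m$ lie in the compact interval $[-a_m, a_m]$, so one can successively pass to subsequences along which the first coordinate converges, then the second, and so on, and finally take the diagonal subsequence, which converges in every coordinate to some limit $b = (b_m)$ with $\abs{b_m} \le a_m$, hence $b \in K$. It then remains to upgrade coordinatewise convergence to norm convergence. This is the step where the tail decay encoded by $(a_m) \in \ltwo(\weight)$ is essential. Fix $\eps > 0$ and choose $M$ so that $\sum_{m > M} a_m^2 \omega_m < \eps$. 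Then for the diagonal subsequence, indexed say by $j_k$,
\[
\norm{b^{(j_k)} - b}^2 = \sum_{m \le M} (b^{(j_k)}_m - b_m)^2 \omega_m + \sum_{m > M} (b^{(j_k)}_m - b_m)^2 \omega_m.
\]
The first (finite) sum tends to $0$ as $k \to \infty$ by coordinatewise convergence, while the second is bounded by $\sum_{m > M} (2a_m)^2 \omega_m \le 4\eps$ uniformly in $k$, using $\abs{b^{(j_k)}_m}, \abs{b_m} \le a_m$. Hence $\limsup_k \norm{b^{(j_k)} - b}^2 \le 4\eps$, and since $\eps$ was arbitrary, $b^{(j_k)} \to b$ in $\ltwo(\weight)$.

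The only genuine obstacle is the uniform control of the tails, and that is precisely what the hypothesis $(a_m) \in \ltwo(\weight)$ provides; everything else (boundedness, closedness, the diagonal extraction) is routine. I expect the write-up to be short, with the tail estimate above as its core.
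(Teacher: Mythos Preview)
Your argument is correct and is essentially the same as the paper's, just unpacked: the paper's two-line proof invokes Tychonoff's theorem to obtain compactness of the cube in the product topology (your diagonal extraction) and then the Dominated Convergence Theorem with dominating sequence $(2a_m)^2\omega_m$ to upgrade coordinatewise convergence to norm convergence (your tail estimate). Your version makes the mechanism explicit, while the paper's version is more compressed; the content is the same.
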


\begin{proof}
First observe that the cube is compact in the product topology by Tychonoff's theorem. Norm convergence to the limit points now follows by the Dominated Convergence Theorem.   
\end{proof}

We now explore some consequences of relative compactness. 

\begin{lemma}\label{lem:contimap} Suppose $\{X_n\}$ and $X$ are random sequences taking values in $\ltwo(\weight)$ such that $X_n$ converges in law to $X$. Then, for any $b \in \ltwo(\weight)$, the random variables $\iprod{b, X_n}$ converges in law to $\iprod{b,X}$.  
\end{lemma}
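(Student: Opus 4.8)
The plan is to show that the linear functional $x \mapsto \iprod{b,x}$ is continuous on $\ltwo(\weight)$, and then invoke the continuous mapping theorem. The continuity is immediate from the Cauchy--Schwarz inequality in $\ltwo(\weight)$: for $x, y \in \ltwo(\weight)$ we have $\abs{\iprod{b,x} - \iprod{b,y}} = \abs{\iprod{b, x-y}} \le \norm{b}\,\norm{x-y}$, so the map is in fact Lipschitz, hence continuous. Here I am using that $\iprod{b,x} = \sum_m b_m x_m \omega_m$ is a well-defined finite sum for $b, x \in \ltwo(\weight)$, again by Cauchy--Schwarz, so that the functional makes sense in the first place.

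Once continuity is established, the conclusion follows from the continuous mapping theorem for weak convergence (see, e.g., \cite[Chapter~3]{EK}): if $X_n \toL X$ in $\ltwo(\weight)$ and $g: \ltwo(\weight) \to \RR$ is continuous, then $g(X_n) \toL g(X)$. Applying this with $g = \iprod{b,\cdot}$ gives $\iprod{b,X_n} \toL \iprod{b,X}$, which is exactly the claim. Strictly speaking the continuous mapping theorem only requires continuity of $g$ at $P_X$-almost every point, but here we have continuity everywhere, so no measure-theoretic subtlety arises; one should also note in passing that $\iprod{b,\cdot}$ is Borel measurable, which is automatic from continuity.

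There is essentially no obstacle here: the lemma is a routine consequence of the fact that bounded linear functionals are continuous together with the continuous mapping theorem. The only point that deserves a line of care is verifying that $\iprod{b,X_n}$ and $\iprod{b,X}$ are genuine real-valued random variables — i.e., that the pairing is finite and measurable — but this too is handled by the Cauchy--Schwarz bound $\abs{\iprod{b,x}} \le \norm{b}\,\norm{x} < \infty$ for every $x$ in the space, combined with continuity. I would therefore present the proof in two short sentences: first the Lipschitz bound exhibiting continuity of the functional, then the appeal to the continuous mapping theorem.
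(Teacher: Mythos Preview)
Your proof is correct and follows the same approach as the paper, which simply states that the lemma is a corollary of the continuous mapping theorem. You have merely made explicit the continuity of the linear functional via Cauchy--Schwarz, which the paper leaves implicit.
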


\begin{proof} This is a corollary of the usual Continuous Mapping Theorem. 
\end{proof}

Our final lemma shows that \textit{finite-dimensional} distributions characterize a probability measure on the Borel $\sigma$-algebra on $X$.

\begin{lemma}\label{lem:fd-characterize}
%%%20120516 Toby rephrased this lemma
Let $x$ be a typical element in $X$. Let $P$ and $Q$ be two probability 
measures on $X$. Suppose for any finite collection of indices 
$(i_1, \ldots, i_k)$, the law of the random vector 
$(x_{i_1}, \ldots, x_{ i_k})$ is the same under both $P$ and $Q$. 
Then $P=Q$ on the entire Borel $\sigma$-algebra.
%%%% 20110830 TOBY made a small change in grammar
\end{lemma}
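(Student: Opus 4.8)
The plan is to show that the finite-dimensional distributions determine a probability measure on $X = \ltwo(\weight)$ by a standard $\pi$--$\lambda$ (Dynkin) argument. First I would let $\mathcal{C}$ denote the collection of all \emph{finite-dimensional cylinder sets}, that is, sets of the form
\[
C = \left\{ x \in X : (x_{i_1}, \ldots, x_{i_k}) \in B \right\}
\]
for some finite index tuple $(i_1, \ldots, i_k)$ and some Borel set $B \subseteq \RR^k$. The hypothesis says precisely that $P$ and $Q$ agree on every member of $\mathcal{C}$. The collection $\mathcal{C}$ is closed under finite intersections (given two cylinders, enlarge both to the union of their index sets and intersect the corresponding base sets), so $\mathcal{C}$ is a $\pi$-system; it also contains $X$ itself. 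By Dynkin's $\pi$--$\lambda$ theorem, $P$ and $Q$ then agree on $\sigma(\mathcal{C})$, the $\sigma$-algebra generated by the cylinder sets.

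The key step, and the one that requires a small argument rather than pure formalism, is to verify that $\sigma(\mathcal{C})$ is in fact the \emph{entire} Borel $\sigma$-algebra $\mathcal{B}(X)$. One inclusion is immediate: each coordinate map $x \mapsto x_m$ is continuous on $X$ (since $\abs{x_m - y_m} \le \omega_m^{-1/2}\norm{x-y}$), hence Borel measurable, so every cylinder set is Borel and $\sigma(\mathcal{C}) \subseteq \mathcal{B}(X)$. For the reverse inclusion I would use that $X$ is a separable metric space, so its Borel $\sigma$-algebra is generated by any countable base, and in particular by the open balls $B(q, r)$ with $q$ ranging over a countable dense set and $r$ over the positive rationals. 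It therefore suffices to show each such open ball lies in $\sigma(\mathcal{C})$. This follows because the norm is a pointwise (in fact monotone) limit of cylinder functions: for $q = (q_m) \in X$,
\[
\norm{x - q}^2 = \sum_{m=1}^{\infty} (x_m - q_m)^2 \omega_m = \lim_{N \to \infty} \sum_{m=1}^{N} (x_m - q_m)^2 \omega_m,
\]
and each partial sum depends only on finitely many coordinates $x_1, \ldots, x_N$, hence is $\sigma(\mathcal{C})$-measurable; thus $x \mapsto \norm{x-q}$ is $\sigma(\mathcal{C})$-measurable and $B(q,r) = \{x : \norm{x-q} < r\} \in \sigma(\mathcal{C})$.

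Combining the two inclusions gives $\sigma(\mathcal{C}) = \mathcal{B}(X)$, and since $P$ and $Q$ agree on the $\pi$-system $\mathcal{C}$ that generates this $\sigma$-algebra, the $\pi$--$\lambda$ theorem yields $P = Q$ on all of $\mathcal{B}(X)$. The main obstacle is really just the bookkeeping in the second paragraph: one must be careful that the relevant $\sigma$-algebra is genuinely generated by finite-dimensional sets, which hinges on separability of $\ltwo(\weight)$ (so that countably many balls suffice) together with the fact that the metric itself is measurable with respect to the finitely-generated $\sigma$-algebra. Everything else is a routine invocation of standard measure-theoretic machinery.
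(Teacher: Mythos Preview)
Your proof is correct and follows essentially the same route as the paper's: both arguments show that the Borel $\sigma$-algebra of $X$ coincides with the $\sigma$-algebra generated by coordinate projections by observing that the norm (hence every open ball) is measurable with respect to the latter, and then use that agreement of finite-dimensional laws forces $P=Q$ on that $\sigma$-algebra. The paper compresses all of this into a couple of sentences, while you spell out the $\pi$--$\lambda$ step and the separability argument explicitly; your version is more carefully written but not conceptually different.
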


\begin{proof} Our claim will follow once we show that $P$ and $Q$ give
  identical mass to every basic open neighborhood determined by the
  norm; however, the norm function $x \mapsto \norm{x}$ is measurable
  with respect to the $\sigma$-algebra generated by coordinate
  projections. Now, under our assumption, every finite-dimensional
  distribution is identical under $P$ and $Q$; hence the probability measures $P$ and $Q$ are identical on the coordinate $\sigma$-algebra. This proves our claim.
\end{proof}

\section{Some results on Poisson approximation}\label{sec:poisson}
\subsection{Cycles in random regular graphs}
\label{sec:cycles}
%%%%%%%%%%%% CHANGE 20110830 - TOBY
  %%%%%%%%  Changed ``in the usual way'' to ``as described in
  %%%%%%%%  the introduction'', and copied some of Elliot's text to here
 Let $G_n$ be the $2d$-regular graph on $n$ vertices
 sampled from $\mathcal{G}_{n,2d}$, the permutation model
 of random regular graphs.  The graph $G_n$ is  generated 
  from the uniform random permutations $\pi_1,\ldots,\pi_d$ 
  as described in the introduction.
  Assume that the vertices of $G_n$ are labeled by $\{1,\ldots,n\}$, and let $C^{(n)}_k$ denote the number of (simple) cycles
  %%%20120516 Toby rephrased following line to avoid word theorem
  of length $k$ in $G_n$.  
  
  We start by giving the limiting distribution
  of $C^{(n)}_k$ as $n\to\infty$.  
%%%20120516 Toby moved this up so that it appears before the proposition
        %2012426 Toby defined cyclically reduced word
    Suppose that $w=w_1\cdots w_k$ is a word 
    on the letters $\pi_1,\ldots,\pi_d$ and $\pi_1^{-1},\ldots,\pi_d^{-1}$.
    We call $w$ \emph{cyclically reduced} if $w_1\neq w_k^{-1}$
    and $w_i\neq w_{i+1}^{-1}$ for $1\leq i < k$.
    Let $a(d,k)$ denote the number of cyclically reduced words of length
    $k$ on this alphabet.
  \begin{prop}\label{thm:poi} %2011426 Toby demoted this to a proposition
    As $n\to\infty$ while $k$ and $d$ are kept fixed, %%%%%correct?? Ioana
    \begin{align*}
      C^{(n)}_k\toL \Poi\left(\frac{a(d,k)}{2k}\right).
    \end{align*}
  \end{prop}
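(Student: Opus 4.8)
The plan is to use the method of moments, showing that the factorial moments of $C^{(n)}_k$ converge to those of the claimed Poisson distribution, which (by a standard result, e.g.\ as in the Chen–Stein circle of ideas or Bollob\'as's treatment of cycles in random graphs) suffices for convergence in law to a Poisson limit with the corresponding mean. Concretely, I would show that for every fixed $j \ge 1$,
\[
\expect\!\left[ (C^{(n)}_k)_{(j)} \right] \longrightarrow \left(\frac{a(d,k)}{2k}\right)^{\! j}
\qquad \text{as } n \to \infty,
\]
where $(m)_{(j)} = m(m-1)\cdots(m-j+1)$ denotes the falling factorial. The key combinatorial observation is that a simple cycle of length $k$ in $G_n$ corresponds, up to the choice of starting vertex and orientation (a factor of $2k$), to a pair consisting of a cyclically reduced word $w = w_1 \cdots w_k$ on the alphabet $\{\pi_1^{\pm 1}, \dots, \pi_d^{\pm 1}\}$ together with an ordered tuple of distinct vertices $(v_0, v_1, \dots, v_{k-1})$ such that $w_i$ maps $v_{i-1}$ to $v_i$ (indices mod $k$). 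The cyclic reducedness condition $w_i \ne w_{i+1}^{-1}$ is exactly what forces the walk not to immediately backtrack along an edge, i.e.\ to be a genuine cycle rather than a degenerate closed walk; this is where the count $a(d,k)$ enters.

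First I would set up the expected-value ($j=1$) computation: $\expect[C^{(n)}_k]$ is $\frac{1}{2k}$ times the number of cyclically reduced words $w$ of length $k$, times the expected number of ordered $k$-tuples of distinct vertices realized by $w$ under the random permutations. For a fixed cyclically reduced word $w$ and a fixed tuple of distinct vertices, the probability that the permutations act as prescribed is computed by grouping the constraints according to which permutation $\pi_\ell$ they involve; each permutation contributes a falling-factorial factor of the form $\prod (n - \#\text{constraints already imposed on }\pi_\ell)^{-1}$, and since $k$ is fixed this probability is $\bigl(1 + O(1/n)\bigr) n^{-k}$. Multiplying by the $n(n-1)\cdots(n-k+1) = (1+O(1/n)) n^k$ choices of distinct-vertex tuples gives a contribution tending to $1$ per word, hence $\expect[C^{(n)}_k] \to a(d,k)/(2k)$. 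For the higher factorial moments, $(C^{(n)}_k)_{(j)}$ counts ordered $j$-tuples of distinct cycles; the main point is that the dominant contribution comes from $j$-tuples of cycles that are \emph{vertex-disjoint} (and hence edge-disjoint), for which the $j$ blocks of constraints are ``asymptotically independent'' — the same per-permutation falling-factorial bookkeeping shows the joint probability factors as $(1+O(1/n))$ times the product of the individual ones. Tuples of cycles that share a vertex or an edge form a negligible fraction: there are $O(n^{jk-1})$ vertex choices for them versus $O(n^{jk})$ total, while the probabilities are of the same order $n^{-jk}$ up to constants, so their total contribution is $O(1/n)$.

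The main obstacle I anticipate is the careful bookkeeping in the probability estimate: when several prescribed images/preimages of a single permutation $\pi_\ell$ are demanded simultaneously, one must check (a) that the demanded constraints are consistent (no vertex is required to have two images under the same $\pi_\ell$, which is guaranteed by working with words acting on distinct vertices plus the cyclic reducedness, though one must be slightly careful at the ``seam'' where the cycle closes up) and (b) that the resulting probability is $\bigl(1+O(1/n)\bigr)$ times $n^{-(\text{number of constraints})}$ uniformly. This is routine but needs attention because a permutation is not a sequence of independent uniform values; the standard fix is to expose the values of $\pi_\ell$ one constraint at a time and observe that each exposed value is uniform over the $n - (\text{already exposed})$ remaining values, giving the falling-factorial form, which is $n^{-1}(1+O(1/n))$ per constraint since the number of constraints is bounded. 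A secondary, purely technical point is handling self-loops and multi-edges: for $k \ge 2$ these are lower-order and do not affect the limit (for $k=1$, a self-loop is itself the cycle and the word $w = w_1$ is automatically cyclically reduced only if $w_1 \ne w_1^{-1}$, consistent with $a(d,1) = 2d$), and one checks that the definition of $C^{(n)}_k$ as counting \emph{simple} cycles lines up with the word count after dividing by $2k$.
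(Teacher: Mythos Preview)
Your proposal is correct and is a close variant of the paper's own argument: both use the method of moments, but you compute factorial moments $\E[(C^{(n)}_k)_{(j)}]\to(a(d,k)/2k)^j$, whereas the paper computes ordinary moments $\E[(C^{(n)}_k)^r]$ by interpreting $(C^{(n)}_k)^r$ as $(2k)^{-r}$ times the number of $r$-tuples of closed trails, grouping tuples by their \emph{category} (the overlap pattern of the trails), and invoking a clean structural lemma: if the category graph $\Gamma$ has $v$ vertices and $e$ edges then $\E[X_\Gamma^{(n)}]=[n]_v\prod_i 1/[n]_{e_i}\to \mathbf{1}\{e=v\}$. Counting categories with $e=v$ (disjoint unions of cycles) via set partitions yields $\sum_m S(r,m)(a(d,k)/2k)^m$, the Poisson moments. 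Your factorial-moment route sidesteps the Stirling-number bookkeeping, at the cost of not setting up the category machinery that the paper reuses later.

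One imprecision worth tightening: for $j$-tuples of distinct cycles that share edges (in the labeled sense), the probability is \emph{not} $\Theta(n^{-jk})$ but larger, so your sentence ``the probabilities are of the same order $n^{-jk}$'' is not literally correct. The robust statement is the paper's: the union of the $j$ cycles has every vertex of degree $\ge 2$, hence $e\ge v$, with equality iff the cycles are pairwise vertex-disjoint; when $e>v$ the expected count $[n]_v/\prod_i[n]_{e_i}\to 0$. This simultaneously handles vertex-sharing and edge-sharing without separate case analysis.
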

  We will actually give a stronger version
  of this result in Theorem~\ref{thm:poiquant}, but
  we include this proposition nevertheless because
  it has a more elementary proof, and because in
  proving it we will develop some lemmas that will come in handy later.
  We also note the following exact expression for
  $a(d,k)$,
  \begin{align}
    a(d,2k) &= (2d-1)^{2k} -1+2d,~~\mbox{and}~~
    & a(d,2k+1)&= (2d-1)^{2k+1}+1, \label{eq:adkbounds}
  \end{align}
	whose proof we provide in the Appendix (see Lemma~\ref{adk}).

      Our argument heavily uses the concepts of \cite{LP},
    but we will try to make our proof self-contained.
%    Consider $G_n$ as a directed, edge-labeled graph, with an edge
%    going from vertex $i$ to $j$ with label $\pi_l$ if $\pi_l(i)=j$.
    Let $\Ww$ be the set of cyclically reduced words of length $k$
    on letters $\pi_1,\ldots,\pi_d$ and $\pi_1^{-1},\ldots,\pi_d^{-1}$.
    For $w\in\Ww$, we define a \emph{closed trail} with
    word $w$ to be an object  of the form
    \begin{align*}
        \xymatrix{
          s_0\ar[r]^{w_1} & s_1\ar[r]^{w_2}&s_2\ar[r]^{w_3}&
          \cdots\ar[r]^{w_k}&s_k=s_0
        }
    \end{align*}
    with $s_i\in\{1,\ldots,n\}$.  In Section~\ref{sec:cycles}, 
    we will consider only
    the case where $s_0,\ldots,s_{k-1}$ are distinct, though
    %%% 20110916a Toby added the word ``in''
    we will drop this assumption in Section~\ref{sec:nbw}.
    We say that the trail appears in $G_n$ if $w_1(s_0)=s_1$,
    $w_2(s_1)=s_2$, and so on. In other words, we are considering
    $G_n$ as a directed graph with edges labeled by the permutations
    that gave rise to them, and we are asking if it contains
    the trail as a subgraph.
    %%% 20110916a Toby inserted following sentence
    We note that a trail (with distinct
    vertices) can only appear in $G_n$ if its word is cyclically reduced.
    
    To give an idea of the method we will use, we demonstrate
    how to calculate $\limn\E[C_k^{(n)}]$.  Suppose we have
    a trail with word $w$. Let
    $e_w^i$ be the number of times $\pi_i$ or $\pi_i^{-1}$ appears
    in $w$.  It is straightforward to see that the trail appears in
    $G_n$ with probability
    $\prod_{i=1}^d 1/[n]_{e_w^i}$,
    where $$[x]_j=x(x-1)\cdots(x-j+1)$$ is the falling factorial or
    Pochhammer symbol.
   
 For every word in $\Ww$, there are
    $[n]_k$ trails with that word.  The total number of trails
    of length $k$
    contained in $G_n$ is $2k$ times the number of cycles, so
    \begin{align}
      2k\E[C_k^{(n)}]=\sum_{w\in\Ww}[n]_k\prod_{i=1}^d\frac{1}{[n]_{e_w^i}}.
        \label{eq:ccount}
    \end{align}
    Each summand converges to 1 as $n\to\infty$,
    giving $\E[C_k^{(n)}]\to a(d,k)/2k$, consistent with
    Proposition~\ref{thm:poi}.
    
    To prove Proposition~\ref{thm:poi}, we will need to count
     more complicated objects
    than in the above example, and we will need some machinery
    from \cite{LP}.
    Suppose we have the following list
    of $r$ trails with associated words $w^1,\ldots,w^r$:
    \begin{align}
          &\xymatrix{
          s^1_0\ar[r]^{w^1_1} & s^1_1\ar[r]^{w^1_2}&
          \cdots\ar[r]^{w^1_k}&s^1_k}\nonumber\\
                  &\xymatrix{
          s^2_0\ar[r]^{w^2_1} & s^2_1\ar[r]^{w^2_2}&
          \cdots\ar[r]^{w^2_k}&s^2_k}\label{eq:list}\\
          &\qquad\qquad\qquad\vdots\nonumber\\
          &\xymatrix{
          s^r_0\ar[r]^{w^r_1} & s^r_1\ar[r]^{w^r_2}&
          \cdots\ar[r]^{w^r_k}&s^r_k}\nonumber
    \end{align}
    with $s_i^j\in\{1,\ldots,n\}$.  Though we take the vertices
    $s^j_0,\ldots,s^j_{k-1}$ of each trail to be distinct, vertices
    from different trails may coincide (see Figure~\ref{fig:trailcat}
    for an example).
    
    Suppose we have another list of $r$ trails, $(u_i^j,\ 0\leq i\leq k,
     1\leq j\leq r)$ with the same
    words $w^1,\ldots,w^r$. We say that these two lists are of
    the same category if $s_i^j=s_{i'}^{j'}\iff u_i^j=u_{i'}^{j'}$.
    Roughly speaking, this means that the trails in the two lists
    overlap each other in the same way.  The probability
    that some list of trails appears in $G_n$ depends only on its category.
    
    We can represent each category as a directed, edge-labeled graph 
    depicting the overlap of the trails.  This is more complicated
    to explain than to do, and we encourage the reader
    to simply look at the example in Figure~\ref{fig:trailcat},
    or at Figure~7 in \cite{LP}.  Given the list of trails
    $(s_i^j)$, we define this graph as follows.  First,
    reconsider the variables
    $s_i^j$ simply as abstract labels rather than elements
    of $\{1,\ldots, n\}$, and partition these labels
    by placing any two of them in a block together
    if (considered as integers again) they are equal. The graph 
    has these blocks as its vertices.
    It includes an  edge labeled $\pi_i$
    from one block to another if the trails include
    a step labeled $\pi_i$ or $\pi_i^{-1}$ from any vertex in the
    first block to any vertex in the second; this edge should be
    directed according to whether the step was labeled  $\pi_i$ or
    $\pi_i^{-1}$.
    
    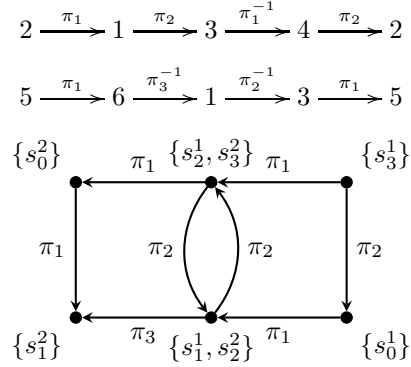
\begin{figure}
    \begin{align*}
        \xymatrix{
          2\ar[r]^{\pi_1} & 1\ar[r]^{\pi_2}&3\ar[r]^{\pi_1^{-1}}&4
          \ar[r]^{\pi_2}&2
        }\\
        \xymatrix{
          5\ar[r]^{\pi_1} & 6\ar[r]^{\pi_3^{-1}}&1\ar[r]^{\pi_2^{-1}}&3
          \ar[r]^{\pi_1}&5
        }
    \end{align*}
    
    \begin{center}
      \begin{tikzpicture}[scale=1.8,vert/.style={circle,fill,inner sep=0,
              minimum size=0.15cm,draw},>=stealth]
        \node[vert, label=below:{$\{s^1_1,s^2_2\}$}] (a1) at (0,0) {};
        \node[vert, label=below right:{$\{s^1_0\}$}] (a2) at (1,0) {};
        \node[vert, label=above:{$\{s^1_2,s^2_3\}$}] (a3) at (0,1) {};
        \node[vert, label=above right:{$\{s^1_3\}$}] (a4) at (1,1) {};
        \node[vert, label=above left:{$\{s^2_0\}$}] (a5) at (-1,1) {};
        \node[vert, label=below left:{$\{s^2_1\}$}] (a6) at (-1,0) {};
        \draw[thick,->] (a2) to node[auto] {$\pi_1$} (a1);
        \draw[thick,->] (a1)
                        to [bend right=38] node[auto,swap] {$\pi_2$} (a3);
        \draw[thick,->] (a4) to node[auto,swap] {$\pi_1$} (a3);
        \draw[thick,->] (a4) to node[auto] {$\pi_2$} (a2);
        \draw[thick,->] (a3) to node[auto,swap] {$\pi_1$} (a5);
        \draw[thick,->] (a5) to node[auto,swap] {$\pi_1$} (a6);
        \draw[thick,->] (a1) to node[auto] {$\pi_3$} (a6);
        \draw[thick,->] (a3) to [bend right=38] node[auto,swap] {$\pi_2$} (a1);          
      \end{tikzpicture}
    \end{center}
    \caption{A list of two trails, and the graph associated with its
    category.  Since $s_2^1=s_3^2=3$, the vertices
    $s_2^1$ and $s_3^2$ are blocked together in the graph,
    and since $s_1^1=s_2^2=1$, the vertices $s_1^1$
    and $s_2^2$ are blocked together.}\label{fig:trailcat}
    %%%%%%% CHANGE 20110902 -- Toby
    %%%%  caption changed
    \end{figure}
    Suppose that $\Gamma$ is the graph of 
    a category of a list
    of trails, and define $X_\Gamma^{(n)}$ to be the number
    of tuples of trails of category $\Gamma$ found in $G_n$.
%%%%%%%%%%%% CHANGE 20110830 - TOBY
%%%%%%%% fixed typo in following line
    If $\Gamma$ is the graph of a category of a list of a \emph{single}
    trail with word $w\in\Ww$, we write $X_w^{(n)}$ for $X_\Gamma^{(n)}$.
    Note that such graphs have a simple form demonstrated in
    Figure~\ref{fig:singlecycle}.
    
    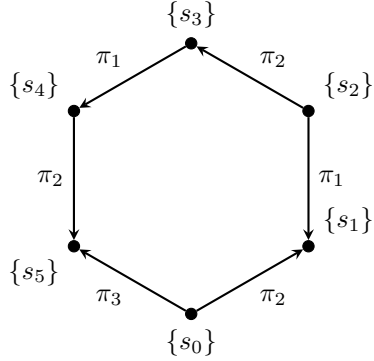
\begin{figure}
      \begin{center}
        \begin{tikzpicture}[scale=1.8,vert/.style={circle,fill,inner sep=0,
              minimum size=0.15cm,draw},>=stealth]
          \node[vert,label=270:{$\{s_0\}$}] (s0) at (270:1) {};
            \node[vert,label=33:{$\{s_1\}$}] (s1) at (330:1) {};
            \node[vert,label=30:{$\{s_2\}$}] (s2) at (30:1) {};
            \node[vert,label=90:{$\{s_3\}$}] (s3) at (90:1) {};
            \node[vert,label=150:{$\{s_4\}$}] (s4) at (150:1) {};
            \node[vert,label=210:{$\{s_5\}$}] (s5) at (210:1) {};
            \draw[thick,->] (s0) to node[auto,swap] {$\pi_2$} (s1);
            \draw[thick,<-] (s1) to node[auto,swap] {$\pi_1$} (s2);
            \draw[thick,->] (s2) to node[auto,swap] {$\pi_2$} (s3);
            \draw[thick,->] (s3) to node[auto,swap] {$\pi_1$} (s4);
            \draw[thick,->] (s4) to node[auto,swap] {$\pi_2$} (s5);
            \draw[thick,<-] (s5) to node[auto,swap] {$\pi_3$} (s0);
        \end{tikzpicture}
      \end{center}
      \caption{The graph $\Gamma$ associated with
      a single trail with word $\pi_2\pi_1^{-1}\pi_2\pi_1
         \pi_2\pi_3^{-1}$.}
      \label{fig:singlecycle}
    \end{figure}
    
    \begin{lemma}\label{lem:10}
      \begin{align*}
         \limn\E[X_\Gamma^{(n)}]=\begin{cases}
           1&\text{if $\Gamma$ has the same number of vertices
             as edges}\\
           0&\text{otherwise}
         \end{cases}                        
       \end{align*}
    \end{lemma}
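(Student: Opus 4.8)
The plan is to reduce the computation of $\E[X_\Gamma^{(n)}]$ to a product of falling-factorial ratios, exactly as in the warm-up calculation of $\E[C_k^{(n)}]$, and then extract the leading-order behavior in $n$. First I would fix the category graph $\Gamma$ arising from a list of trails $(s_i^j)$, and let $V$ denote its vertex set (the blocks) and $E$ its edge set (the labeled steps). For each permutation index $i$, let $\ell_i$ be the number of edges of $\Gamma$ labeled $\pi_i$; then $\sum_i \ell_i = |E|$. The probability that a \emph{fixed} assignment of distinct elements of $\{1,\dots,n\}$ to the blocks of $\Gamma$ (i.e., an injective labeling $V\hookrightarrow\{1,\dots,n\}$ realizing the category) actually appears in $G_n$ is $\prod_{i=1}^d 1/[n]_{\ell_i}$; this is because, conditioning on each $\pi_i$ separately, specifying $\ell_i$ distinct images of $\ell_i$ distinct points happens with probability $1/[n]_{\ell_i}$, and the $\pi_i$ are independent. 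The key point here, which should be stated carefully, is that an injective labeling of $V$ produces \emph{exactly} the category $\Gamma$ (not a coarser one), because the block structure of $\Gamma$ already records all coincidences and injectivity forbids any further collapsing.

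Next I would count the number of such injective labelings: it is $[n]_{|V|}$, the number of ways to choose an ordered $|V|$-tuple of distinct vertices. Therefore
\begin{align*}
  \E[X_\Gamma^{(n)}] = [n]_{|V|}\prod_{i=1}^d \frac{1}{[n]_{\ell_i}}.
\end{align*}
Now I would take $n\to\infty$. Each factor $[n]_{|V|}/n^{|V|}\to 1$ and $n^{\ell_i}/[n]_{\ell_i}\to 1$, so $\E[X_\Gamma^{(n)}] \sim n^{|V|}/n^{\sum_i \ell_i} = n^{|V|-|E|}$. Hence the limit is $1$ when $|V|=|E|$, and it is $0$ when $|V|<|E|$. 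To finish I must argue that $|V|>|E|$ is impossible: every connected component of $\Gamma$ contains a cycle (since $\Gamma$ is a union of images of \emph{closed} trails, so every vertex has both an incoming and an outgoing edge, forcing at least as many edges as vertices in each component), so $|E|\geq |V|$ always. Combining, $\limn \E[X_\Gamma^{(n)}]$ equals $1$ if $|V|=|E|$ and $0$ otherwise, which is the claim.

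The main obstacle I anticipate is the bookkeeping in the first step: justifying that the probability of a fixed injective realization of $\Gamma$ is precisely $\prod_i 1/[n]_{\ell_i}$, and that distinct injective labelings of $V$ correspond bijectively to distinct tuples of trails of category exactly $\Gamma$. One has to be a little careful that an edge of $\Gamma$ labeled $\pi_i$ can in principle be traversed in either direction (as $\pi_i$ or $\pi_i^{-1}$) by the underlying trails, and that the parallel edges/multiplicities in $\Gamma$ are handled correctly — but since the category graph, by construction, has one edge per distinct $(\text{block},\text{block},\text{label})$-triple actually used, and since each such edge imposes one constraint $\pi_i(a)=b$ on the relevant permutation, the count of independent constraints on $\pi_i$ is exactly $\ell_i$ and they involve $\ell_i$ distinct source points (distinctness because the sources are distinct blocks). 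This is the content of the corresponding lemma in \cite{LP}, and I would cite it while spelling out the adaptation to our labeled-permutation setting. The limiting step itself is then a one-line asymptotic for falling factorials, and the component-cycle argument for $|E|\geq|V|$ is immediate from the closed-trail structure.
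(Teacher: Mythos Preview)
Your proposal is correct and follows essentially the same approach as the paper: compute $\E[X_\Gamma^{(n)}]=[n]_{|V|}\prod_{i=1}^d 1/[n]_{\ell_i}$ by counting injective labelings and using independence of the permutations, then read off the limit $n^{|V|-|E|}$ and rule out $|V|>|E|$ via the minimum-degree-two observation. The paper's proof is slightly terser (it simply asserts the degree-two fact without the closed-trail justification you give), but the argument is the same.
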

    To demonstrate the connection to the calculation
    we performed in \eqref{eq:ccount}, observe that
    \begin{align*}
      2kC_k^{(n)}=\sum_{w\in\Ww}X_w^{(n)},
    \end{align*}
    %%%%% 20111102 Toby added the words ``the expectation of'' to the 
    %%%%% following line
    and by our lemma the expectation of
    this converges to $a(d,k)$ as $n\to\infty$.
    \begin{proof}[Proof of Lemma~\ref{lem:10}]
      This is essentially the same calculation as in
      \eqref{eq:ccount}.  Let $e$ and $v$ be the number
      of edges and vertices, respectively, of the
      graph $\Gamma$.  Let $e_i$ be the number of edges
      in $\Gamma$ labeled by $\pi_i$.
      
      There are $[n]_v$ different trails of category $\Gamma$,
      corresponding to the number of ways to assign
      vertices $\{1,\ldots, n\}$ to the vertices of $\Gamma$.
%%%20120516 Toby made a minor grammatical fix in next line
      Since each of these trails appears in $G_n$
      with probability $\prod_{i=1}^d1/[n]_{e_i}$, 
      \begin{align}
        \E[X_\Gamma^{(n)}] = [n]_v\prod_{i=1}^d\frac{1}{[n]_{e_i}}.
          \label{eq:10}
      \end{align}
      As $n\to\infty$, this converges to 0 if $e>v$ and to 1 if
      $e=v$.  If $\Gamma$ is the graph of a category of
      a list of trails, then every vertex has degree
      at least 2, so it never happens that $e<v$, which completes
      the lemma.  We note for later use 
      that this remains true even when we drop
      the requirement that all vertices of a trail be distinct,
      so long as the word of each trail is cyclically reduced.
    \end{proof}
      
%%%%%%% START change 20111102 - Toby
  \begin{proof}[Proof of Proposition~\ref{thm:poi}]
    We will use the moment method.  Fix a positive integer $r$.
    The main idea of the proof is
    interpret $\big(\Cy[n]{k}\big)^r$ as the number of
    $r$-tuples of cycles of length $k$ in $G_n$.  As there are $2k$ closed
    trails for every cycle of length $k$, we can also think
    of it as $(2k)^{-r}$ times the number of $r$-tuples of closed trails
    of length $k$ in $G_n$.
    
    Let $\Gg$ be the set of graphs of categories of lists of $r$ trails
    of length $k$.
    The above interpretation implies that
    \begin{align}
      \big(\Cy[n]{k}\big)^r = \frac{1}{(2k)^r}\sum_{\Gamma\in\Gg}\X[n]{\Gamma}.
      \label{eq:tupleinterp}
    \end{align}
    By Lemma~\ref{lem:10}, we can compute $\limn \E\big(\Cy[n]{k}\big)^r$
    by counting the number of graphs in $\Gamma$ with the same number
    of edges as vertices.  Let $\Gg'\subset\Gg$ be the set of
    such graphs.
    
    Let $\Gamma\in\Gg'$, and consider some list of $r$
    trails of category $\Gamma$.  Since $\Gamma$ has as
    many edges as vertices, it consists of disjoint cycles.
    This implies that for any two trails in the list, either the
    trails are wholly identified in $\Gamma$, or they are are
    disjoint.  These identifications of the $r$ different trails
    give a partition of $r$ objects.
    
    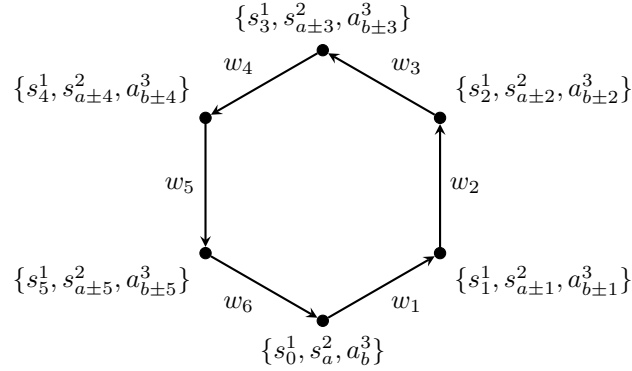
\begin{figure}
      \begin{center}
        \begin{tikzpicture}[scale=1.8,vert/.style={circle,fill,inner sep=0,
              minimum size=0.15cm,draw},>=stealth]
          \node[vert,label=270:{$\{s^1_0, s^2_a, a^3_b\}$}] (s0) at (270:1) {};
            \node[vert,label=-33:{$\{s^1_1, s^2_{a\pm 1}, a^3_{b\pm 1}\}$}] 
                (s1)  at (330:1) {};
            \node[vert,label=30:{$\{s^1_2, s^2_{a\pm 2}, a^3_{b\pm 2}\}$}] (s2) at (30:1) {};
            \node[vert,label=90:{$\{s^1_3,s^2_{a\pm 3}, a^3_{b\pm 3}\}$}] (s3) at (90:1) {};
            \node[vert,label=150:{$\{s^1_4,s^2_{a\pm 4}, a^3_{b\pm 4}\}$}] (s4) at 
                    (150:1) {};
            \node[vert,label=210:{$\{s^1_5,s^2_{a\pm 5}, a^3_{b\pm 5}\}$}] (s5) at 
            (210:1) {};
            \draw[thick,->] (s0) to node[auto,swap] {$w_1$} (s1);
            \draw[thick,->] (s1) to node[auto,swap] {$w_2$} (s2);
            \draw[thick,->] (s2) to node[auto,swap] {$w_3$} (s3);
            \draw[thick,->] (s3) to node[auto,swap] {$w_4$} (s4);
            \draw[thick,->] (s4) to node[auto,swap] {$w_5$} (s5);
            \draw[thick,->] (s5) to node[auto,swap] {$w_6$} (s0);
        \end{tikzpicture}
      \end{center}      
      \caption{A graph formed from three trails of length~$6$,
        all identified with each other.  There are $a(d,6)$
        choices for the edge-labels $w$.
        There are six choices for which element $s^2_a$ will
%%%20120516 Toby added word ``for'' in next line        
        be identified with $s^1_0$, and two choices for how to orient
        the trail $s^2$ when identifying it with $s^1$.
        There are also six choices for which element $s^3_b$
        will be identified with $s^1_0$, along with another two 
        choices for its orientation.  All together, there
        are $a(d,6)(2\cdot 6)^2$ elements of $\Gg'$ corresponding
        to the partition of three elements into one part.}
        \label{fig:gcounting}
    \end{figure}
      
    Given some partition of the $r$ objects into $m$ parts, we will
    count the graphs in $\Gg'$ whose trails are identified
    according to the partition (see Figure~\ref{fig:gcounting}
    for an example).  Consider some part
    consisting of $p$ trails.  The trails form
    a cycle in $\Gamma$; we need to count the number of different
    ways to label the edges and vertices.
    There are $a(d,k)$ different ways to label the edges.
    Each trail in the part 
    can have its vertices identified with those of the first trail
    in $2k$ different ways, for a total of $(2k)^{p-1}$ choices.
    Thus the number of choices for this part is $a(d,k)(2k)^{p-1}$.
    Doing this for every part in the given partition, we have
    a total of $a(d,k)^m(2k)^{r-m}$.
    Recalling that the number of partitions of $r$ objects into
    $m$ parts is given by the Stirling number of the second kind
    $S(r,m)$,
    \begin{align*}
      |\Gg'| &= \sum_{m=1}^r S(r,m)a(d,k)^m(2k)^{r-m}.
    \end{align*}
    By \eqref{eq:tupleinterp} and Lemma~\ref{lem:10},
    \begin{align*}
      \limn \E\big(\Cy[n]{k}\big)^r&=\sum_{m=1}^r S(r,m)
        \left(\frac{a(d,k)}{2k}\right)^m.
    \end{align*}
    It is well known that this is the $r$th moment
    of the $\Poi(a(d,k)/2k)$ distribution (see for example
    \cite{Pit}), and that this distribution is determined
    by its moments, thus proving the theorem.
  \end{proof}

%%%%%%%% END change 2011102

  This theorem tells us the limiting distribution of $C_k^{(n)}$ as $n\to\infty$,
  with $d$ and $k$ fixed, but tells us nothing if $d$ and $k$ grow with
  $n$.  The following theorem addresses this, and gives us a quantitative
  bound on the rate of convergence.  We will assume throughout
  that $d\geq 2$; we use this assumption only to simplify some
  of our asymptotic quantities, but as far better results
  for the $d=1$ case are already known (see \cite{AT}),
  we see no reason to complicate things.
%%%%%%%%%%%% START change 20110830 - TOBY
  %%%%%%%%%%%%%%%% fixed typo (2 instead of 1) and restated theorem
  %%%%%%%%%% without big-O notation
  For clarity, we state this and future results
  with an explicit constant rather than big-O notation,
  but it is the order, not the constant, that interests us.
  %%%% 20110916a Toby moved Ioana's definition of total variation distance
  %%%% to here (and also had to change its wording)
  Recall that the total variation distance between two probability measures
    is the largest possible difference between the probabilities that
    they assign to the same event.
  \begin{thm}\label{thm:poiquant}
      There is a constant $C_0$ such that for any $n$, $k$, and $d\geq 2$, the
      total variation distance between the law
      of $C_k^{(n)}$ and $\Poi(a(d,k)/2k)$
      is bounded by $C_0k(2d-1)^k/n$.
    \end{thm}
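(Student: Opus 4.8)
The plan is to use the Chen--Stein method for Poisson approximation, in the multivariate-to-univariate form that bounds total variation distance via a sum over ``dependency neighborhoods.'' Specifically, I would index the cycles of length $k$ in $G_n$ by a ground set $I$ consisting of all $[n]_k/(2k)$ potential cycles — that is, all cyclically reduced closed trails of length $k$ up to the $2k$-fold symmetry of rotation and reflection — and for each $\alpha\in I$ let $Y_\alpha$ be the indicator that cycle $\alpha$ appears in $G_n$. Then $C_k^{(n)}=\sum_{\alpha\in I}Y_\alpha$ and I want to compare $\sum Y_\alpha$ to a Poisson with mean $\lambda=\sum_\alpha \E Y_\alpha$. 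The computation in \eqref{eq:ccount} already shows $\E C_k^{(n)}\to a(d,k)/2k$; I would first record the quantitative version, namely that $\abs{\lambda - a(d,k)/2k} \le C k^2(2d-1)^k/n$ or similar, by expanding the falling factorials $[n]_k/\prod_i [n]_{e_w^i}$ and noting each factor is $1+O(k^2/n)$ while there are $a(d,k)\le (2d-1)^k+2d$ words. This already gives one term of the desired bound.

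Next I would set up the dependency graph: declare $\alpha\sim\beta$ if the trails corresponding to $\alpha$ and $\beta$ share at least one vertex (equivalently, at least one edge), and note that if $\alpha\not\sim\beta$ then $Y_\alpha$ and $Y_\beta$ are \emph{not} quite independent — appearance of disjoint trails still interacts through the permutation constraint — so I actually need the version of the Chen--Stein bound (as in Arratia--Goldstein--Gordon, or \cite{LP}) that carries a third term $b_3$ measuring this weak long-range dependence, or else I would use a size-biased/coupling version. The standard bound is
\[
\dtv\!\left(\mathcal L(C_k^{(n)}),\Poi(\lambda)\right) \le b_1 + b_2 + b_3,
\]
with $b_1=\sum_\alpha\sum_{\beta\in N(\alpha)}\E Y_\alpha \E Y_\beta$, $b_2=\sum_\alpha\sum_{\beta\in N(\alpha)\setminus\{\alpha\}}\E Y_\alpha Y_\beta$, and $b_3$ a term controlling $\sum_\alpha \E\abs{\E[Y_\alpha\mid \sigma(Y_\beta:\beta\not\sim\alpha)]-\E Y_\alpha}$. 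I would estimate each: $\E Y_\alpha\asymp n^{-k}\cdot$(constant depending on $d,k$) for each of the $\asymp n^k(2d-1)^k/2k$ cycles, $\abs{N(\alpha)}\asymp k^2 n^{k-1}(2d-1)^k$ (choosing a shared vertex, then the rest of the second trail), so $b_1\asymp n^k(2d-1)^k \cdot n^{-k}\cdot k^2 n^{k-1}(2d-1)^k\cdot n^{-k} = k^2(2d-1)^{2k}/n$ — which is too large by a factor of $(2d-1)^k$. This tells me the naive dependency-neighborhood estimate is insufficient and I must be more careful: I should exploit that when two $k$-cycles share a vertex but are not identical they in fact share an \emph{edge}, hence must overlap in a path, which forces extra vertices to coincide and gains back powers of $n$. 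Carrying out this refined counting — essentially re-running the category/graph analysis of Lemma~\ref{lem:10} for pairs — is where the real work lies.

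The main obstacle, then, is the combinatorial bookkeeping of pairwise overlaps: I need to show that the total contribution of $b_1+b_2$ is $O(k(2d-1)^k/n)$, which requires classifying the ways two cyclically reduced $k$-trails can overlap (sharing one path, sharing several paths, one being a rotation/reflection of a sub-walk of the other, etc.) and checking that in every case the number of excess vertices forced to coincide beats the number of excess edge-label choices. The cleanest route is probably to phrase everything through the graphs $\Gamma$ of categories of lists of two trails, use \eqref{eq:10} to get $\E X_\Gamma^{(n)} = [n]_v\prod_i 1/[n]_{e_i} \le C(k)^{?}\, n^{v-e}$, observe $v-e\le -1$ for any $\Gamma$ arising from two genuinely overlapping but non-identical trails, and then bound the number of such $\Gamma$ with $v-e=-1$ by $O(k\cdot(\#\text{words}))=O(k(2d-1)^k)$ after accounting for the $(2k)^2$ symmetry overcount — mirroring the $|\Gg'|$ count in the proof of Proposition~\ref{thm:poi} but one dimension down. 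For the $b_3$ term I would argue that conditioning on the appearance of trails vertex-disjoint from $\alpha$ changes $\P(Y_\alpha=1)$ by replacing some falling factorials $[n]_{e}$ by $[n-O(k)]_{e}$, a relative perturbation of size $O(k^2/n)$ per cycle, contributing another $O(k^2(2d-1)^k/n)$; I would absorb any leftover $k$ factors, since the theorem allows a clean $C_0 k(2d-1)^k/n$ and one expects the worst term to be linear in $k$. Finally I would combine the mean-correction term with $b_1+b_2+b_3$ and the standard triangle inequality $\dtv(\mathcal L(C_k^{(n)}),\Poi(a(d,k)/2k))\le \dtv(\mathcal L(C_k^{(n)}),\Poi(\lambda)) + \dtv(\Poi(\lambda),\Poi(a(d,k)/2k))$, the second being at most $\abs{\lambda-a(d,k)/2k}$, to conclude.
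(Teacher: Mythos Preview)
Your approach has a genuine gap. In the Arratia--Goldstein--Gordon framework you set up, the term $b_1=\sum_\alpha\sum_{\beta\in N(\alpha)}p_\alpha p_\beta$ is a sum of \emph{products of marginal expectations}; no overlap analysis can reduce it, because $p_\alpha p_\beta$ is insensitive to how $\alpha$ and $\beta$ intersect. With $N(\alpha)$ taken to be ``shares a vertex,'' you correctly compute $b_1\asymp(2d-1)^{2k}/n$, which exceeds the target by a factor of $(2d-1)^k$. Your proposed fix---``when two $k$-cycles share a vertex but are not identical they in fact share an edge''---is false: in a $2d$-regular graph with $d\ge 2$ a vertex has degree $\ge 4$, so two cycles can pass through it on disjoint edge-pairs. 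The category refinement you sketch would help with $b_2$ only up to the same $(2d-1)^{2k}/n$ scale (there are $\asymp a(d,k)^2$ word-pairs, and the generic overlap has $v-e=-1$), so it doesn't close the gap either. The missing ingredient on this route is the refined Stein bound with prefactor $\min(1,\lambda^{-1})$ in front of $b_1+b_2$; with $\lambda\asymp(2d-1)^k/2k$ this recovers the desired $k(2d-1)^k/n$. You never invoke it. Separately, your $b_3$ sketch (``replace $[n]_e$ by $[n-O(k)]_e$'') is not a bound on $\E\bigl|\E[Y_\alpha\mid\sigma(Y_\beta:\beta\not\sim\alpha)]-p_\alpha\bigr|$: that conditional expectation is with respect to the full $\sigma$-algebra of vertex-disjoint cycle indicators, not a single disjoint trail, and is not obviously of the claimed size.

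The paper takes exactly the route you mention in passing but do not develop: size-biased coupling. For each potential cycle $s$ it constructs, on the same probability space as $G_n$, a graph $G_n'$ distributed as $G_n$ conditioned to contain $s$, by explicitly modifying the permutations $\pi_l$ with a short sequence of transpositions. Defining $Y_s$ as the cycle count in $G_n'$ (minus the indicator of $s$), the Stein identity reduces the total variation bound to $\frac{\Delta g}{[n]_k}\sum_s\E|C-Y_s|$ plus a negligible mean-correction term; the $\Delta g\le\lambda^{-1}$ bound then cancels the $\lambda$ coming from $|\Cc|/[n]_k$. The real combinatorics goes into proving $\E|C-Y_s|=O(k(2d-1)^k/n)$: one partitions potential cycles $t$ according to how many edges they share with $s$ (sets $\Cc_{-1},\Cc_0,\ldots,\Cc_{k-1}$), uses the coupling to show $F_t'\ge F_t$ for $t\in\Cc_j$ with $j\ge 0$ and $F_t'=0$ for $t\in\Cc_{-1}$, and then bounds $|\Cc_j|$ by an explicit assembly argument. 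This replaces your $b_1,b_2,b_3$ bookkeeping entirely and avoids both obstacles above.
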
  
  \begin{proof}
  We will prove this using Stein's method;
%%%%%%%%%%%% 20110830 - TOBY changed setup to set-up
%%%% Ioana swapped some sentences
good introductions to Stein's method for the Poisson
  distribution can be found in
  \cite{CDM},
  %%%20120516 Toby used phrase ``size-biased coupling'', which seems
  % to be what it's going by now
  \cite{BC}, and especially \cite{BHJ}, which focuses on the
  the technique of size-biased coupling
  that we will employ.   %%%%%%%% 20110902 Toby added
                      %%%%%%% the bit here about the coupling
                      %%%%%%% technique
We give here the basic set-up. 
  Let $\ZZ_+$ denote the nonnegative integers.  %%%%20110902 Toby added this
                      %%%%%%%%% and changed \NN in next two lines to \ZZ_+
  For any $A\subset\ZZ_+$, let $g=g_{\lambda,A}$
  be the function on $\ZZ_+$ satisfying
  \begin{align*}
    %%%20120516 Toby changed 1 to \1
    \lambda g(j+1)-jg(j)=\one_{j\in A}-\Poi(\lambda)\{A\}
  \end{align*}
  with $g(0)=0$, where $\Poi(\lambda)\{A\}$ denotes the measure
  of $A$ under the $\Poi(\lambda)$ distribution.  
  This function $g$ is the called the solution to the
  Stein equation.   For any nonnegative
  integer-valued random variable $X$,
  \begin{align}
    \P[X\in A]-\Poi(\lambda)\{A\}=\E[\lambda g(X+1)-Xg(X)].\label{eq:tvdist}
  \end{align}
  Bounding the right hand side of this equation over
  all choices of $g$ thus bounds
  the total variation distance between the law of $X$ and the $\Poi(\lambda)$
  distribution.
  The following estimates on $g$ are standard (see \cite[Lemma~1.1.1]{BHJ},
  for example):
  \begin{align}
    \norm{g}_{\infty}\leq\min(1,\lambda^{-1/2}),\qquad \Delta g\leq\min
      (1,\lambda^{-1}),  \label{eq:steinbounds}
  \end{align}
  where $\Delta g=\sup_{j}|g(j+1)-g(j)|$.  
  
  %%% 20110916a Toby removed ``whose words are cyclically reduced'', since
  %%% any trail (with distinct vertices) has a cyclically reduced word
  Let $\Cc$ be the set of closed trails of length $k$ on $n$ vertices,
  with two trails identified if one is a cyclic or inverted cyclic
  %change by Soumik
  shift of one another.
  Elements of $\Cc$ are essentially
  cycles in the complete graph on $n$ vertices, with edges labeled
  by $\pi_1,\ldots,\pi_d$ and $\pi_1^{-1},\ldots,\pi_d^{-1}$.
  We note that $|\Cc|=[n]_ka(d,k)/2k$.
  
  %2012426 Toby changed notation in following line
  For $t\in\Cc$, let $F_t=\mathbf{1}_{(\text{$t$ occurs in $G_n$})}$.
  Let $\lambda=a(d,k)/2k$.  We abbreviate
  $C_k^{(n)}$ to $C$, and we note that $C=\sum_{t\in\Cc}F_t$.
  We can evaluate the right hand side of \eqref{eq:tvdist}
  as
  \begin{align*}
    \E[\lambda g(C+1)-C g(C)]
      &=\sum_{s\in\Cc}\left(\frac{1}{[n]_k}\E[g(C+1)]-\E[F_sg(C)]\right)
  \end{align*}
  Let $p_t=\E[F_t]$.
  We note that $F_sg(C)=F_sg\big(\sum_{t\neq s}F_t+1\big)$, and
  that
  \begin{align*}
    \E\Big[F_sg\Big(\sum_{t\neq s}F_t+1\Big)\Big]
    &=p_s\E\Big[g\Big(\sum_{t\neq s}F_t+1\Big)\ \Big|\ F_s=1\Big].
  \end{align*}
  %%%%%%%% 20110902 START CHANGES - Toby
  %%%%% Reorganized many things.  Corrected error
  %%%%% about the case where $k\geq n^{1/6}$.
  In Lemma~\ref{prop:coupling},
  we will construct for each $s\in\Cc$ a random variable
  $Y_s$ on the same probability space as $C$ that has the distribution
  of $\sum_{t\neq s}F_t$ conditioned on $F_s=1$.  Then we evaluate
  \begin{align*}
    |\E[\lambda g(C+1)-C g(C)]|
    &=\left|
    \sum_{s\in\Cc}\left(\frac{1}{[n]_k}\E[g(C+1)]-p_s\E[g(Y_s+1)]\right)
      \right|\\
    &\leq\sum_{s\in\Cc}\frac{1}{[n]_k}\E\big|g(C+1)-g(Y_s+1)\big|
    +\sum_{s\in\Cc}\left|\frac{1}{[n]_k}-p_s\right|\E\big|g(Y_s+1)\big|.
  \end{align*}
  We bound these terms as follows:
  \begin{align*}
    |g(C+1)-g(Y_s+1)|&\leq\Delta g |C-Y_s|\\\intertext{and}
    \left|\frac{1}{[n]_k}-p_s\right|&\leq\left|\frac{1}{[n]_k}-\frac{1}{n^k}
    \right|\leq\frac{k^2}{2n[n]_k}.
  \end{align*}
  This last bound makes use of  the inequality $[n]_k\geq
      n^k(1-k^2/2n)$.  Applying these bounds gives
  \begin{align}
    |\E[\lambda g(C+1)-C g(C)]|
    &\leq\sum_{s\in\Cc}\frac{\Delta g}{[n]_k}\E|C-Y_s|
    +|\Cc|\frac{k^2}{2n[n]_k}\norm{g}_{\infty}\nonumber\\  %%%%%%%%TOBY added _{\infty} 20110830
    &\leq \frac{\Delta g}{[n]_k}\sum_{s\in\Cc}\E|C-Y_s|
    +O\left(\frac{k^{3/2}(2d-1)^{k/2}}{n}\right).\label{eq:finalbound}
  \end{align}
  To get a good bound on this, we just need to demonstrate how to
  construct $Y_s$ so that $\E|C-Y_s|$
  is small.
  We sketch our method as follows: Fix $s\in\Cc$,
  and let $G_n'$ be a random graph
  on $n$ vertices distributed as $G_n$ conditioned to contain
  the cycle $s$.  We will couple $G_n'$ with $G_n$
  in a natural way, and then prove in Lemma~\ref{lem:coupling}
  that $G_n$ and $G_n'$ differ only slightly.
  We then define $Y_s$ in terms of $G_n'$, and we
  establish
  in Lemma~\ref{prop:coupling} that
  $\E|C-Y_s|$ is small.  Finally,
  we finish the proof of Theorem~\ref{thm:poiquant}
  by using these results to bound the right side
  of \eqref{eq:finalbound}.
  
  We start by constructing $G_n'$.  
  Fix some $s\in\Cc$.
  The basic 
  idea is to modify the permutations
    $\pi_1,\ldots,\pi_d$ to 
    get random permutations $\pi_1',\ldots,\pi_d'$,
    which we will then use to create
    a $2d$-regular graph $G_n'$ in
    the usual way.
    Before we give our construction
    of $\pi_1',\ldots,\pi_d'$, we
    consider what distributions
    they should have.
    Suppose for example that
    $d=3$ and $s$ is
    \begin{align*}
        \xymatrix{
          1\ar[r]^{\pi_3} & 2\ar[r]^{\pi_1^{-1}}&3\ar[r]^{\pi_3}&4\ar[r]^{\pi_1}&1.
        }
    \end{align*}
    To force $G_n'$ to contain $s$,
    $\pi_1'$ should be a uniform random permutation conditioned
    to make $\pi_1'(4)=1$ and $\pi_1'(3)=2$, $\pi_2'$ a uniform
    random permutation with no conditioning, 
    and $\pi_3'$ a uniform random %%%%%%%% 20110830 TOBY fixed typo
    permutation conditioned to make $\pi_3'(1)=2$ and $\pi_3'(3)=4$.

    We now describe the construction of $\pi_1',\ldots,\pi_d'$.
    Suppose $s$ has the form
    \begin{align}\label{eq:sform}
        \xymatrix{
          s_0\ar[r]^{w_1} & s_1\ar[r]^{w_2}&s_2\ar[r]^{w_3}&
          \cdots\ar[r]^{w_k}&s_k=s_0.
        }
    \end{align}
    (The element $s$ is actually an equivalence class of the $2k$
    different cyclic and inverted cyclic shifts of the above trail,
    but we will continue
    to represent it as above.)
    Let $1\leq l\leq d$, and
    suppose that the edge-labels $\pi_l$ and $\pi_l^{-1}$
    appear $M$ times in the cycle $s$, and let
    $(a_m,b_m)$ for $1\leq m\leq M$ be these edges.
    If $(a_m,b_m)$ is labeled $\pi_l$, then $a_m$ is the
    tail and $b_m$ the head of the edge; if it is labeled $\pi_l^{-1}$,
    then $a_m$ is the head and $b_m$ the tail.
    We must construct $\pi_l'$ to have the uniform
    distribution conditioned on $\pi_l'(a_m)=b_m$
      for  $(a_m,b_m),\ 1\leq m\leq M$.
      
      We define a sequence of random transpositions by the following
      algorithm: Let $\tau_1$ swap $\pi_l(a_1)$ and $b_1$.
      Let $\tau_2$ swap $\tau_1\pi_l(a_2)$ and $b_2$, and so on.
      We then define $\pi_l'=\tau_M\cdots\tau_1\pi_l$.
      This permutation satisfies $\pi_l'(a_m)=b_m$ for $1\leq m\leq M$,
      and it is distributed uniformly, subject to the
      given constraints, which is easily 
      proven by induction on each swap.
      This completes our construction of $\pi_1',\ldots,\pi_d'$.
      
      We now define $G_n'$ to be the random graph
%%%20120516 Toby added a missing ' to next line      
      on $n$ vertices with edges $(i,\pi_j'(i))$
      for every $1\leq i\leq n$ and $1\leq j\leq d$.
      It is evident that $G_n'$ is defined
      on the same probability space as $G$
      and is distributed as $G_n$ conditioned on
      containing $s$.
    The key fact
    is that $G_n'$ is nearly
    identical to $G_n$:
    \begin{lemma}\label{lem:coupling}
      Suppose there is an edge $\xymatrix{i\ar[r]^{\pi_l}&j}$
      contained in $G_n$ but not in $G_n'$.  Then the trail $s$
      contains either an edge of the form $\xymatrix{i\ar[r]^{\pi_l}&v}$
      or of the form $\xymatrix{v\ar[r]^{\pi_l}&j}$.
    \end{lemma}
    \begin{proof}
      Suppose
      $\pi_l(i)=j$, but $\pi_l'(i)\neq j$.  Then $j$ must
      have been swapped 
      when making $\pi'_l$, which can happen only if
      $\pi_l(a_m)=j$ or $b_m=j$ for some $m$.  In the first case,
      $a_m=i$ and
      $s$ contains the edge $\xymatrix{i\ar[r]^{\pi_l}&b_m}$ with
      $b_m\neq j$,
      and in the second $s$ contains the edge
      $\xymatrix{a_m\ar[r]^{\pi_l}&j}$ with $a_m\neq i$.
    \end{proof}
    If $s$ contains an edge of the form $\xymatrix{i\ar[r]^{\pi_l}&v}$
    or of the form $\xymatrix{v\ar[r]^{\pi_l}&j}$, then
    $G_n'$ cannot possibly contain $\xymatrix{i\ar[r]^{\pi_i}&j}$ while
    still containing $s$.  The preceding lemma then says that we
    have coupled $G_n$ and $G_n'$ as best we can, in the
    following sense: 
    $G_n'$ keeps as many edges of $G_n$ that it can, given that it 
    contains $s$.
    
    For $t\in\Cc$, let
%%%20120516 Toby fixed notation in following line    
      $F'_t=\one_{(\text{$G_n'$ contains $t$})}$.
      Define $Y_s$ by $Y_s=\sum_{t\neq s}F_t'$.
    Since $G_n'$ is distributed as $G_n$ conditioned
    to contain $s$, the random variable $Y_s$
    is distributed as $\sum_{t\neq s}F_t$ conditioned
    on $F_s=1$.
    We now proceed to bound $\E|C-Y_s|$, adding
    in the minor technical condition
    that $k<n^{1/6}$.
  
    %%%%%%%%%% 20110830 TOBY
  %%%%%%%%%% rewrote statement without big-O notation
  \begin{lemma}\label{prop:coupling}
    There exists an absolute constant $C_1$ with the
    following property.
    For any $s\in\Cc$ and $Y_s$ defined above, and
    for all $n$, $k$, and $d\geq 2$ satisfying $k<n^{1/6}$,
    \begin{align}
      \E|C-Y_s|\leq  \frac{C_1k(2d-1)^k}{n}\label{eq:couplebound},
    \end{align}
  \end{lemma}
    
    \begin{proof}
      We start by partitioning the cycles of $\Cc$ according to how
      many edges they share with $s$.  Define
      $\Cc_{-1}$ as all elements in $\Cc$ that contain an edge
          $\xymatrix{s_i\ar[r]^{w_i}&v}$ with $v\neq s_{i+1}$
          or an edge $\xymatrix{v\ar[r]^{w_{i+1}}&s_{i+1}}$ 
%%%%%%%%% Ioana: hey, is the above true?
with $v\neq s_i$.
      For $0\leq j<k$, define $\Cc_j$ as all elements of $\Cc\setminus\Cc_{-1}$
      that share exactly $j$ edges with $s$.  
      
      The sets $\Cc_{-1},\ldots,
      \Cc_{k-1}$ include every element of $\Cc$ except for $s$.
      Loosely, this classifies elements of $\Cc$ according
      their likelihood of appearing in $G_n'$ compared to in $G_n$: trails
      in $\Cc_{-1}$ never appear in $G_n'$; trails in $\Cc_0$ appear
      in $G_n'$ with nearly the same probability as in $G_n$; and 
      the trails in $\Cc_i$ appear in $G_n'$ considerably more often than
      in $G_n$.
      
      This classification of elements of $\Cc$ works nicely with our coupling.
      Suppose $t\in\Cc_i$ for $i\geq 0$.  Lemma~\ref{lem:coupling}
      shows that if $t$ appears in $G_n$, it must also appear in $G_n'$.
      That is,
      $F_t'\geq F_t$ for all $t\in\Cc_i$ for $i\geq 0$. On the
      other hand, $F_t'=0$
      for all $t\in\Cc_{-1}$.  Using this,
      \begin{align}
        \E|C-Y_s| &= \E\left|F_s+\sum_{t\in\Cc_{-1}}(F_t-F'_t)
          +\sum_{t\in\Cc_0}(F_t-F'_t)+\sum_{i=1}^{k-1}\sum_{t\in\Cc_i}
          (F_t-F'_t)\right|\nonumber\\
          &\leq p_s+\E\left|\sum_{t\in\Cc_{-1}}(F_t-F'_t)\right|
          +\E\left|\sum_{t\in\Cc_0}(F_t-F'_t)\right|
          +\E\left|\sum_{i=1}^{k-1}\sum_{t\in\Cc_i}(F_t-F'_t)\right|\nonumber\\
        &= p_s+\sum_{t\in\Cc_{-1}}\E[F_t]+\sum_{t\in\Cc_0}
          \E[F_t'-F_t]+\sum_{i=1}^{k-1}\sum_{t\in\Cc_i}\E[F'_t-F_t]\nonumber\\
          &\leq p_s+\sum_{t\in\Cc_{-1}}p_t+\sum_{t\in\Cc_0}
          (p_t'-p_t)+\sum_{i=1}^{k-1}\sum_{t\in\Cc_i}p'_t,\label{eq:mainbound}
      \end{align}
      with $p'_t=\E[F'_t]$.
      
      The rest of the proof is an analysis of $|\Cc_i|$ and of $p_t'$.
      We start by considering the first sum.  For any edge
      $\xymatrix{s_i\ar[r]^{w_{i+1}}&v}$ with $v\neq s_{i+1}$
          or  $\xymatrix{v\ar[r]^{w_{i+1}}&s_{i+1}}$ with 
          $v\neq s_i$, there are no more than
          $[n-2]_{k-2}(2d-1)^{k-1}$ trails containing that edge (identifying
          cyclic and inverted cyclic shifts).
      This gives the bound
      \begin{align*}
        |\Cc_{-1}|\leq 2k(n-2)[n-2]_{k-2}(2d-1)^{k-1}.
      \end{align*}
      Applying $p_t\leq 1/[n]_k$, 
      \begin{align*}
        \sum_{t\in\Cc_{-1}}p_t =O\left(\frac{k(2d-1)^{k-1}}{n}\right).
       \end{align*}
       For the next sum, we note that with $e_t^i$ denoting
       the number of times $\pi_i$ and $\pi_i^{-1}$ appear in
       the word of $t$, for
       for any $t\in\Cc_0$,
       \begin{align*}
         p_t=\prod_{i=1}^d\frac{1}{[n]_{e_t^i}},\qquad\qquad
         p'_t=\prod_{i=1}^d\frac{1}{[n-e_s^i]_{e_t^i}}.
       \end{align*}
       Thus we have $p'_t\leq 1/[n-k]_k$
       and $p_t\geq 1/n^k$.  Using the bound 
       $|\Cc_0|\leq |\Cc|=a(d,k)[n]_k/2k$,
       we have
       \begin{align*}
         \sum_{t\in\Cc_0}
          (p_t'-p_t)&\leq\frac{a(d,k)[n]_k}{2k}\left(\frac{1}{[n-k]_k}-
          \frac{1}{n^k}\right)\\
          &=\frac{a(d,k)}{2k}\left(\left(\frac{n}{n-k}\right)^k
          \left(1+O\left(\frac{k^2}{n}\right)\right)-
          \left(1+O\left(\frac{k^2}{n}\right)\right)
          \right)\\
          &=\frac{a(d,k)}{2k}\left(\left(1+\frac{k}{n-k}\right)^k
          \left(1+O\left(\frac{k^2}{n}\right)\right)-
          \left(1+O\left(\frac{k^2}{n}\right)\right)
          \right)\\
          &=O\left(\frac{k(2d-1)^k}{n}\right).
       \end{align*}
       
       The last and most involved calculation
       is to bound $|\Cc_i|$.  Fix some choice of $i$ edges
       of $s$.  We start by counting the number of cycles in $\Cc_i$
       that share exactly these edges with $s$.
       We illustrate this process in Figure~\ref{fig:graphassembly}.
       Call the graph consisting of these edges
       $H$, and suppose that $H$ has $p$ components.
       Since it is a forest, $H$ has $i+p$ vertices.
       
       Let $A_1,\ldots, A_p$ be the components of $H$.  We can
       assemble any  $t\in\Cc_i$ that overlaps with $s$ in $H$
       by stringing together these components in some order, with
       other edges in between.  Each component can appear in
       $t$ in one of two orientations.  Since we consider $t$ only up
       to cyclic shift and inverted cyclic shift, we can
       assume without
       loss of generality that
       $t$ begins with component $A_1$ with a fixed orientation.
       This leaves $(p-1)!2^{p-1}$ choices for the order
       and orientation of $A_2,\ldots,A_p$ in $t$.
       
       Imagine now the components laid out in a line,
       with gaps between them, and count the number
       of ways to fill the gaps.
       Each of the
       $p$ gaps
       must contain at least one edge, and the total number
       of edges in all the gaps is $k-i$.
       Thus the total number of possible gap sizes
       is the number of compositions of $k-i$ into $p$ parts,
       or $\binom{k-i-1}{p-1}$.
       
       Now that we have chosen the number of edges
       to appear in each gap, we choose the edges themselves.
       We can do this by giving
       an ordered list $k-p-i$ vertices to go in the gaps,
       along with a label and an orientation
       for each of the $k-i$ edges this gives.
       There are $[n-p-i]_{k-p-i}$ ways to choose the vertices.
       We can give each new edge any orientation and label subject
       to the constraint that the word of $t$ must be reduced.
       This means we have at most $2d-1$ choices for the orientation
       and label of each new edge, for a total of at most $(2d-1)^{k-i}$.
       \begin{figure}
         \begin{center}
        \begin{tikzpicture}[scale=2.1,vert/.style={circle,fill,inner sep=0,
              minimum size=0.15cm,draw}, H/.style={dashed},>=stealth]
          \begin{scope}
          \node[vert,label=90:{$1$}] (s1) at (270:1cm) {};
            \node[vert,label=57:{$2$}] (s2) at (237:1) {};
            \node[vert,label=18:{$3$},H] (s3) at (205:1) {};
            \node[vert,label=358:{$4$},H] (s4) at (171:1) {};
            \node[vert,label=319:{$5$},H] (s5) at (139:1) {};
            \node[vert,label=278:{$6$}] (s6) at (106:1) {};
            \node[vert,label=260:{$7$},H] (s7) at (74:1) {};
            \node[vert,label=221:{$8$},H] (s8) at (41:1) {};
            \node[vert,label=182:{$9$},H] (s9) at (8:1) {};
            \node[vert,label=158:{$10$},H] (s10) at (335:1) {};
            \node[vert,label=118:{$11$}] (s11) at (302:1) {};
            \draw[thick,->] (s1) to node[auto] {$\pi_1$} (s2);
            \draw[thick,->] (s2) to node[auto] {$\pi_1$} (s3);
            \draw[thick,<-,H] (s3) to node[auto,black] {$\pi_2$} (s4);
            \draw[thick,->,H] (s4) to node[auto,black] {$\pi_3$} (s5);
            \draw[thick,->] (s5) to node[auto] {$\pi_1$} (s6);
            \draw[thick,<-] (s6) to node[auto,black] {$\pi_2$} (s7);
            \draw[thick,<-,H] (s7) to node[auto,black] {$\pi_1$} (s8);
            \draw[thick,<-] (s8) to node[auto,black] {$\pi_2$} (s9);
            \draw[thick,->,H] (s9) to node[auto,black] {$\pi_1$} (s10);
            \draw[thick,->] (s10) to node[auto,black] {$\pi_3$} (s11);
            \draw[thick,->] (s11) to node[auto,black] {$\pi_3$} (s1);
            \draw[yshift=-0.7in,text width=6.5cm] node
            {
              The cycle $s$, with $H$ dashed.  The subgraph
              $H$ has components $A_1,\ldots,A_p$.  In this example,
              $p=3$, $k=11$, and $i=4$.
            };
          \end{scope}
          \begin{scope}[xshift=3.7cm]
            \path (-1.476,0) node[vert,label=below:{$3$}] (s3) {}
                  -- ++(0.563,0) node[vert,label=below:$4$] (s4) {}
                  -- ++(0.563,0) node[vert,label=below:$5$] (s5) {}
                  -- ++(0.35,0) node[vert,label=below:$10$] (s10) {}
                  -- ++(0.563,0) node[vert,label=below:$9$] (s9){}
                  -- ++(0.35,0) node[vert,label=below:$7$] (s7) {}
                  -- ++(0.563,0) node[vert,label=below:$8$] (s8){};
            \draw[thick,<-] (s3) to node[auto] {$\pi_2$} (s4);
            \draw[thick,->] (s4) to node[auto] {$\pi_3$}(s5);
            \draw[thick,->] (s9) to node[auto,swap] {$\pi_1$}(s10);
            \draw[thick,<-] (s7) to node[auto] {$\pi_1$}(s8);
            \draw[yshift=-0.7in,text width=6.5cm] (0,0) node
            {
              \textbf{Step 1.} We lay out the components $A_1,\ldots,A_p$.
              We can order and orient $A_2,\ldots,A_p$ however we would like,
              for a total of $(p-1)!2^{p-1}$ choices.
              Here, we have ordered the components $A_1, A_3, A_2$, 
              and we have reversed the orientation of $A_3$.
            };
          \end{scope}
          \begin{scope}[yshift=-4cm,new/.style={}]
            \path (-1.476,0) node[vert,label=below:{$3$}] (s3) {}
                  -- ++(0.563,0) node[vert,label=below:$4$] (s4) {}
                  -- ++(0.563,0) node[vert,label=below:$5$] (s5) {}
                  -- ++(0.35,0) node[vert,label=below:$10$] (s10) {}
                  -- ++(0.563,0) node[vert,label=below:$9$] (s9){}
                  -- +(-0.1065,0.553) node[vert,new] (a1){}
                  -- +(0.4565,0.553) node[vert,new] (a2) {}
                  -- ++(0.35,0) node[vert,label=below:$7$] (s7) {}
                  -- ++(0.563,0) node[vert,label=below:$8$] (s8){}
                  -- (0.492,-0.5) node[vert,new] (a3) {}
                  -- (-0.492,-0.5) node[vert,new] (a4) {};
            \draw[thick,<-] (s3) to node[auto] {$\pi_2$} (s4);
            \draw[thick,->] (s4) to node[auto] {$\pi_3$}(s5);
            \draw[thick,new] (s5) to [out=90,in=90] (s10);
            \draw[thick,->] (s9) to node[auto,swap] {$\pi_1$}(s10);
            \draw[thick,new] (s9) to (a1);
            \draw[thick,new] (a1) to (a2);
            \draw[thick,new] (a2) to (s7);
            \draw[thick,<-] (s7) to node[auto] {$\pi_1$}(s8);
            \draw[thick,new] (s8) to[out=225,in=15] (a3);
            \draw[thick,new] (a3)--(a4);
            \draw[thick,new] (a4) to[out=165,in=305] (s3);
            
            \draw[yshift=-0.7in,text width=6.5cm] (0,0) node
            {
               %%% 20120507 Toby added another vertex to diagram
              \textbf{Step 2.} Next, we choose how many edges will go in each
              gap between components.  Each gap must contain at least
              one edge, and we must add a total of $k-i$ edges, giving
              us $\binom{k-i-1}{p-1}$ choices.
              In this example, we have added one edge after $A_1$,
              three after $A_3$, and two after $A_2$.
            };
          \end{scope}
          \begin{scope}[yshift=-4cm,xshift=3.7cm,new/.style={}]
            \path (-1.476,0) node[vert,label=below:{$3$}] (s3) {}
                  -- ++(0.563,0) node[vert,label=below:$4$] (s4) {}
                  -- ++(0.563,0) node[vert,label=below:$5$] (s5) {}
                  -- ++(0.35,0) node[vert,label=below:$10$] (s10) {}
                  -- ++(0.563,0) node[vert,label=below:$9$] (s9){}
                  -- +(-0.1065,0.553) node[vert,label={[new]
                                       above left:$23$}] (a1){}
                  -- +(0.4565,0.553) node[vert,
                            label={[new]above right:$1$}] (a2) {}
                  -- ++(0.35,0) node[vert,label=below:$7$] (s7) {}
                  -- ++(0.563,0) node[vert,label=below:$8$] (s8){}
                  -- (0.492,-0.5) node[vert,label=below:$14$] (a3) {}
                  -- (-0.492,-0.5) node[vert,label=below:$21$] (a4) {};
            \draw[thick,<-] (s3) to node[auto] {$\pi_2$} (s4);
            \draw[thick,->] (s4) to node[auto] {$\pi_3$}(s5);
            \draw[thick,<-] (s5) to [out=90,in=90] node[auto,new]
            {$\pi_1$} (s10);
            \draw[thick,->] (s9) to node[auto,swap] {$\pi_1$}(s10);
              \draw[thick,->] (s9) to (a1);
            
            \draw (s9) +(-0.21,0.32) node[new] {$\pi_2$};
            \draw[thick,->] 
                (a1) to node[auto,new] {$\pi_3$} (a2);
            \draw[thick,<-] (a2) to (s7);
            \draw (s7) +(0.21,0.32) node[new] {$\pi_2$};
            \draw[thick,<-] (s7) to node[auto] {$\pi_1$}(s8);
            \draw[thick,<-] (s8) to[out=225,in=15] node[auto,new] {$\pi_2$}
              (a3);
            \draw[thick,->] (a3)--node[auto,new] {$\pi_1$}(a4);
            \draw[thick,->] (a4) to[out=165,in=305] node[auto,new]{$\pi_1$}
            (s3);
            
            \draw[yshift=-0.7in,text width=6.5cm] (0,0) node
            {
              \textbf{Step 3.} We can choose the new vertices in
              $[n-p-i]_{k-p-i}$ ways, and we can direct and give labels
              to the new edges in at most $(2d-1)^{k-i}$ ways.
            };
          \end{scope}
        \end{tikzpicture}          
         \end{center}
         \caption{Assembling an element $t\in\Cc_i$ that
         overlaps with $s$ at a given subgraph $H$.}
         \label{fig:graphassembly}
       \end{figure}

       All together, there are at most
       $(p-1)!2^{p-1}\binom{k-i-1}{p-1}[n-p-i]_{k-p-i}(2d-1)^{k-i}$
       elements of $\Cc_i$ that overlap with the cycle $s$ at the
       subgraph $H$.  We now calculate the number of different ways to choose
       a subgraph $H$ of $s$ with $i$ edges and $p$ components.
       Suppose $s$ is given as in \eqref{eq:sform}.
       We first choose a vertex $s_j$.  Then, we can
       specify which edges to include in $H$ by giving a sequence
       $a_1,b_1,\ldots,a_p,b_p$ instructing us
       to include the first $a_1$ edges after $s_j$ in $H$,
       then to exclude the next $b_1$, then to include the next $a_2$,
       and so on.  Any sequence for which $a_i$ and $b_i$
       are positive integers, $a_1+\cdots+ a_p=i$,
       and $b_1+\cdots+b_p=k-i$ gives us a valid choice of $i$ edges of $s$
       making up $p$ components.  This counts each subgraph $H$ a total of
       $p$ times, since we could begin with any component of $H$.
       Hence the number of subgraphs $H$ with $i$ edges
       and $p$ components is $(k/p)\binom{i-1}{p-1}\binom{k-i-1}{p-1}$.
       This gives us the bound
       \begin{align*}
         |\Cc_i|&\leq \sum_{p=1}^{i\wedge (k-i)}
              (k/p)\binom{i-1}{p-1}\binom{k-i-1}{p-1}^2
           (p-1)!2^{p-1}[n-p-i]_{k-p-i}(2d-1)^{k-i}.
       \end{align*}
       We apply the bounds $\binom{i-1}{p-1}\leq k^{p-1}/(p-1)!$
       and $\binom{k-i-1}{p-1}\leq (e(k-i-1)/(p-1))^{p-1}$
       to get
       \begin{align*}
         |\Cc_i|
           &\leq k(2d-1)^{k-i}[n-1-i]_{k-1-i}\left(1+
              \sum_{p=2}^{i\wedge (k-i)}\frac{1}{p}
              \left(\frac{2e^2k^3}{(p-1)^2}\right)^{p-1}
              \frac{1}{[n-1-i]_{p-1}}\right).
       \end{align*}
       Since $k<n^{1/6}$,
       the sum in the above equation is bounded by an absolute constant.
       Using the bound $p_t'\leq 1/[n-k]_{k-i}$ for $t\in\Cc_i$,
       we have
       \begin{align*}
         \sum_{t\in\Cc_i}p_t'= O\left(
         \frac{k(2d-1)^{k-i}}{n}
         \right)
       \end{align*}
       and
       \begin{align*}
         \sum_{i=1}^{k-1}\sum_{t\in\Cc_i}p_t'= O\left(
           \frac{k(2d-1)^{k-1}}{n}
         \right).
       \end{align*}
       These estimates, along with
       $p_s\leq 1/[n]_k$, complete the proof.
    \end{proof}

    All that remains now is to apply this lemma
    to finish
    the proof of Theorem~\ref{thm:poiquant}.
    First, consider the case where $k\geq n^{1/6}$.
    Then $k(2d-1)^k/n>1$
    for sufficiently large values of $n$ (regardless
    of $d$), in which case the theorem is trivially
    satisfied.  By choosing $C_0$ large enough,
    it holds for all $n$ with $k\geq n^{1/6}$.
    
    When $k<n^{1/6}$, we
      apply Lemma~\ref{prop:coupling} and \eqref{eq:steinbounds}
      to \eqref{eq:finalbound}
      to get
      \begin{align*}
        |\E[\lambda g(C+1)-C g(C)]|
        &=\frac{\Delta g}{[n]_k}|\Cc|O\left(\frac{k(2d-1)^k}{n}\right)
        +O\left(\frac{k^{3/2}(2d-1)^{k/2}}{n}\right)\\
        &=O\left(\frac{k(2d-1)^{k}}{n}\right)+
        O\left(\frac{k^{3/2}(2d-1)^{k/2}}{n}\right)
      \end{align*}
      The first term is larger %%%20110916a Toby changed triplets to pairs
      than the second for all but finitely many
      pairs $(k,d)$ with $d\geq 2$.
      Hence
      there exists
      $C_0$ large enough that for all $n$, $k$, and
      $d\geq 2$,
      \begin{align*}
        |\E[\lambda g(C+1)-C g(C)]|
        &\leq \frac{C_0k(2d-1)^k}{n}.\qedhere
      \end{align*}
    \end{proof}
  %%%%%%%% 20110902 END CHANGES - Toby

    We will need a multivariate version of this theorem as well.
    %%%%%%%%%%20110902 ADDITION- Toby
    %%%%%% clarified some notation
    %%%%%% Ioana introduced the "Notation" label for clarity
    %%%% 20110916a Toby introduced the C_k^{(\infty)} notation earlier
    %%%% and changed how the theorem was stated
    Define $(\Cy{k};\ k\geq 1)$ to be 
    independent
    Poisson random variables, with $\Cy{k}$ having mean $a(d,k)/2k$.
    Let $\dtv(X,Y)$ denote the total variation distance between
    the laws of $X$ and $Y$. 
    \begin{thm}\label{thm:multipoiquant}
      There is a constant
      $C_2$ such that
      for all $n$, $k$, and $d\geq 2$,
      \begin{align*}
        \dtv\left(\big(C_1^{(n)}, %%20110917 Toby fixed a typo
         \ldots,C_r^{(n)}\big),\ \big(\Cy{1},\ldots,\Cy{r}\big)\right)\leq
       \frac{C_2(2d-1)^{2r}}{n}.
      \end{align*}
    \end{thm}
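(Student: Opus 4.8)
The plan is to rerun the size-biased coupling behind Theorem~\ref{thm:poiquant} inside the multivariate Poisson version of Stein's method. Write $\Cc^{(k)}$ for the set of closed trails of length $k$ up to cyclic and inverted cyclic shift, $1\le k\le r$, and $F_s=\one_{(\text{$s$ occurs in }G_n)}$, so that $C_k^{(n)}=\sum_{s\in\Cc^{(k)}}F_s$ and, as noted for the univariate case, $\abs{\Cc^{(k)}}=[n]_k\,a(d,k)/2k$, hence $\lambda_k:=a(d,k)/2k=\sum_{s\in\Cc^{(k)}}1/[n]_k$. For $A\subseteq\ZZ_+^r$ let $g$ solve the Stein equation for $(\Cy{1},\dots,\Cy{r})$, namely $\sum_{k=1}^r\big(\lambda_k(g(x+e_k)-g(x))+x_k(g(x-e_k)-g(x))\big)=\one_A(x)-\prod_k\Poi(\lambda_k)\{A\}$. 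Since the target has independent coordinates, this equation factors coordinatewise and the univariate estimates \eqref{eq:steinbounds} apply in each coordinate; because $\lambda_k\ge\lambda_1=d\ge 2$, the first and mixed second differences of $g$ are all $O(1)$, uniformly (see \cite[Ch.~10]{BHJ}). Taking a supremum over $A$ at the end bounds $\dtv\big((C_1^{(n)},\dots,C_r^{(n)}),(\Cy{1},\dots,\Cy{r})\big)$.

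Following the proof of Theorem~\ref{thm:poiquant} line by line, I would plug $W=(C_1^{(n)},\dots,C_r^{(n)})$ into the Stein identity $\P[W\in A]-\prod_k\Poi(\lambda_k)\{A\}=\E[\mathcal{A}g(W)]$ and size-bias each coordinate: for $s\in\Cc^{(k)}$ let $G_n'(s)$ be $G_n$ conditioned to contain $s$, coupled to $G_n$ by the transposition construction preceding Lemma~\ref{lem:coupling}; put $F_t'=\one_{(\text{$t$ occurs in }G_n'(s))}$, $Y_{s,k}=\sum_{t\in\Cc^{(k)}\setminus\{s\}}F_t'$, and $Y_{s,j}=\sum_{t\in\Cc^{(j)}}F_t'$ for $j\ne k$, so that conditioned on $F_s=1$ the vector $W$ has the law of $\vec Y_s+e_k$ with $\vec Y_s=(Y_{s,1},\dots,Y_{s,r})$. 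Exactly as in the derivation of \eqref{eq:finalbound}, this gives
\[
\big|\P[W\in A]-{\textstyle\prod_k}\Poi(\lambda_k)\{A\}\big|\ \le\ C\sum_{k=1}^r\frac{1}{[n]_k}\sum_{s\in\Cc^{(k)}}\E\sum_{j=1}^r\abs{C_j^{(n)}-Y_{s,j}}\ +\ C\sum_{k=1}^r\abs{\Cc^{(k)}}\,\frac{k^2}{2n[n]_k},
\]
the second sum coming from $\abs{1/[n]_k-p_s}\le k^2/(2n[n]_k)$ together with the $O(1)$ bounds on $g$. Using $a(d,k)\le 2(2d-1)^k$ from \eqref{eq:adkbounds}, the second sum is $O\big(n^{-1}\sum_{k=1}^r k(2d-1)^k\big)=O\big(r(2d-1)^r/n\big)=O\big((2d-1)^{2r}/n\big)$, since $r\le 2^r\le(2d-1)^r$ for $d\ge 2$.

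The remaining work is to estimate $\sum_{j=1}^r\E\abs{C_j^{(n)}-Y_{s,j}}$ for $s\in\Cc^{(k)}$, a cross-length analogue of Lemma~\ref{prop:coupling}. By Lemma~\ref{lem:coupling}, $G_n$ and $G_n'(s)$ agree except on edges incident to the length-$k$ trail $s$, so for each $j$ one partitions $\Cc^{(j)}$ by how many edges a length-$j$ trail shares with $s$: a set $\Cc^{(j)}_{-1}$ of trails through a broken edge of $s$, a set $\Cc^{(j)}_0$ of edge-disjoint trails, and sets $\Cc^{(j)}_i$ sharing exactly $i$ edges. Repeating the counting of Lemma~\ref{prop:coupling}, now with $s$ of length $k$ and $t$ of length $j$, using $p_t\le 1/[n]_j$, $p_t'\le 1/[n-k]_{j-i}$, and the forest-assembly count of Figure~\ref{fig:graphassembly} for the $j-i$ new edges, one gets $\E\abs{C_j^{(n)}-Y_{s,j}}=O\big(k(2d-1)^j/n\big)$ for each $j$ provided $r<n^{1/6}$, hence $\sum_{j=1}^r\E\abs{C_j^{(n)}-Y_{s,j}}=O\big(k(2d-1)^r/n\big)$. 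Summing,
\[
\sum_{k=1}^r\frac{1}{[n]_k}\sum_{s\in\Cc^{(k)}}\E\sum_{j=1}^r\abs{C_j^{(n)}-Y_{s,j}}=O\!\left(\frac{(2d-1)^r}{n}\sum_{k=1}^r k\,\frac{\abs{\Cc^{(k)}}}{[n]_k}\right)=O\!\left(\frac{(2d-1)^r}{n}\sum_{k=1}^r a(d,k)\right)=O\!\left(\frac{(2d-1)^{2r}}{n}\right),
\]
the factor $k$ from the length-$k$ trail cancelling the $1/k$ in $\abs{\Cc^{(k)}}/[n]_k=a(d,k)/2k$. Adding the two contributions and taking the supremum over $A$ proves the bound when $r<n^{1/6}$; when $r\ge n^{1/6}$ one has $(2d-1)^{2r}/n>1$ for all large $n$, so the estimate holds trivially there after enlarging $C_2$ to absorb the finitely many remaining $n$.

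I expect the main obstacle to be the cross-length combinatorial estimate of $\E\abs{C_j^{(n)}-Y_{s,j}}$ for $j\ne k$: adapting the already intricate forest-assembly argument of Lemma~\ref{prop:coupling} to the case where the perturbing cycle $s$ and the counted cycle $t$ have different lengths --- especially when $j>k$, where more length-$j$ trails can meet $s$ --- while keeping the constant independent of $r$. A secondary, routine point is importing the uniform bounds on the multivariate Poisson Stein solution $g$ for a target with independent, bounded-below intensities.
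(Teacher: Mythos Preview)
Your proposal is correct and follows essentially the same route as the paper: multivariate Stein's method via the generator $\mathcal{A}$ and the Stein solution bounds from \cite[Ch.~10]{BHJ}, the same transposition-based size-biased coupling applied to each $s\in\Cc^{(k)}$, and a cross-length version of Lemma~\ref{prop:coupling} (which the paper states as Lemma~\ref{prop:multicoupling}) to control $\E\norm{\mathbf C-\mathbf Y_s}_1$.

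One small correction: your intermediate claim $\E\abs{C_j^{(n)}-Y_{s,j}}=O\big(k(2d-1)^j/n\big)$ is slightly off when $j>k$, because the $\Cc^{(j)}_0$ contribution is $O\big(j(2d-1)^j/n\big)$ rather than $O\big(k(2d-1)^j/n\big)$ (the exponent in $1/[n-k]_j-1/n^j$ brings in a factor of order $j(k+j)/n$, not $k^2/n$). The paper accordingly states $\E\norm{\mathbf C-\mathbf Y_s}_1\le C_3 r(2d-1)^r/n$ rather than $C_3 k(2d-1)^r/n$. This does not affect your final bound: with the $r$ in place of $k$, the outer sum becomes $\big(r(2d-1)^r/n\big)\sum_{k=1}^r a(d,k)/2k=O\big((2d-1)^{2r}/n\big)$, since $\sum_{k\le r}(2d-1)^k/k=O\big((2d-1)^r/r\big)$ for $d\ge2$. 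So the obstacle you flagged is real but harmless, and the forest-assembly count goes through for $j\ne k$ with only cosmetic changes.
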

    Our proof will be very similar to the single variable case
    above, except that we use Stein's method for Poisson process
    approximation (see \cite[Section~10.3]{BHJ}).
    Let $\lambda_k=a(d,k)/2k$, and %%%% 20110916a Toby added in this line
    let        %%%% Toby removed the definition of $\ZZ_+$
    $e_i\in\ZZ_+^r$ be the vector with $i$th entry one and all
    other entries zero.
    Define the operator $\Aa$ by
    \begin{align*}
      \Aa h(x) = \sum_{k=1}^r\lambda_k\big(h(x+e_k)-h(x)\big)
        +\sum_{k=1}^rx_k\big(h(x-e_k)-h(x)\big)
    \end{align*}
%%%20120516 Toby changed : to \colon    
    for any $h\colon \ZZ_+^r\to \RR$ and $x\in\ZZ_+^r$.
    We now describe the function that plays a role analogous
    to $g$ in the single variable case.
    \begin{lemma}\label{lem:mvstein}
      For any set $A\subset\ZZ_+^r$, there is a function
      $h\colon \ZZ_+^r\to\RR$ such that
      \begin{align*} %%% 20110916a Toby made a change here
        \Aa h(x)=1_{x\in A}- %%%20110917 changed n to r
        \P\big[\big(\Cy{1},\ldots,\Cy{r}\big)\in A\big].
      \end{align*}
      This function $h$ has the following properties:
      \begin{align}
        &\sup_{\substack{x\in\ZZ_+^r\\ 1\leq k\leq r}}|h(x+e_k)-h(x)|\leq 1,
          \label{eq:mvsteinbound1}\\
        &\sup_{\substack{x\in\ZZ_+^r\\ 1\leq j,k\leq r}}|h(x+e_j+e_k) - h(x+e_j)
         +h(x)- h(x+e_k)|\leq 1.\label{eq:mvsteinbound2}
      \end{align}
    \end{lemma}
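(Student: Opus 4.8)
The plan is to realize $\Aa$ as the infinitesimal generator of a Markov process and to solve the Stein (Poisson) equation by integrating the associated semigroup, exactly as in the general treatment of Poisson process approximation in \cite[Section~10.3]{BHJ}. Let $Z(t)=(Z_1(t),\ldots,Z_r(t))$ be the $\ZZ_+^r$-valued \emph{immigration--death process} whose coordinates are independent, the $k$th one being a birth-death chain on $\ZZ_+$ with constant immigration rate $\lambda_k$ and in which each of the currently present particles dies independently at rate $1$. Its generator is precisely $\Aa$, and its unique stationary law is $\mu:=\bigotimes_{k=1}^{r}\Poi(\lambda_k)$, the law of $(\Cy 1,\ldots,\Cy r)$. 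I would then set
\[
  h(x)=-\int_0^\infty\Bigl(\P_x[Z(t)\in A]-\mu(A)\Bigr)\,dt ,\qquad x\in\ZZ_+^r .
\]

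First I would justify that this integral converges absolutely and that $\Aa h=\one_{\cdot\in A}-\mu(A)$. Convergence is immediate from the explicit transient law of each coordinate ($Z_k(t)$ is distributed as the sum of independent $\mathrm{Binomial}(x_k,e^{-t})$ and $\Poi(\lambda_k(1-e^{-t}))$ random variables), which makes $\bigl|\P_x[Z(t)\in A]-\mu(A)\bigr|$ decay exponentially in $t$. The Poisson-equation identity is then the usual computation: writing $(P_t)$ for the semigroup of $Z$, one has $\int_0^\infty\Aa(P_t\one_{\cdot\in A})\,dt=\lim_{T\to\infty}\bigl(P_T\one_{\cdot\in A}-\one_{\cdot\in A}\bigr)=\mu(A)-\one_{\cdot\in A}$ by ergodicity, and interchanging $\Aa$ with the integral gives $\Aa h=\one_{\cdot\in A}-\mu(A)$. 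This part can essentially be quoted from \cite{BHJ}.

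The substance of the lemma is the difference bounds \eqref{eq:mvsteinbound1} and \eqref{eq:mvsteinbound2}, and these I would get by coupling. For \eqref{eq:mvsteinbound1}, run $Z$ from $x$ and a copy $Z'$ from $x+e_k$ on a common space so that all initial particles and all future immigrations are shared and the single extra particle of $Z'$ is given an independent $\mathrm{Exp}(1)$ lifetime $\tau$; once that particle dies the two processes coincide, so $\bigl|\P_x[Z(t)\in A]-\P_{x+e_k}[Z(t)\in A]\bigr|\le\P[\tau>t]=e^{-t}$, and hence $|h(x+e_k)-h(x)|\le\int_0^\infty e^{-t}\,dt=1$. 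For \eqref{eq:mvsteinbound2} I would couple four copies $Z^{00},Z^{10},Z^{01},Z^{11}$ started from $x$, $x+e_j$, $x+e_k$, $x+e_j+e_k$ respectively, all sharing a common ``base'' process $W$ (started from $x$, with shared immigrations) together with two independent distinguished particles having $\mathrm{Exp}(1)$ lifetimes $\tau_1$ (belonging to $Z^{10}$ and $Z^{11}$) and $\tau_2$ (belonging to $Z^{01}$ and $Z^{11}$); this prescription is consistent whether or not $j=k$. The key point is that
\[
  \one_{Z^{11}(t)\in A}-\one_{Z^{10}(t)\in A}-\one_{Z^{01}(t)\in A}+\one_{Z^{00}(t)\in A}
\]
vanishes unless $\tau_1>t$ and $\tau_2>t$ simultaneously --- if, say, $\tau_1\le t$ then $Z^{11}(t)=Z^{01}(t)$ and $Z^{10}(t)=Z^{00}(t)$, so the four indicators cancel in pairs --- while its absolute value never exceeds $2$. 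Therefore the mixed second difference of $h$ is bounded by $2\int_0^\infty\P[\tau_1>t,\tau_2>t]\,dt=2\int_0^\infty e^{-2t}\,dt=1$, which is \eqref{eq:mvsteinbound2}.

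The step I expect to be the main obstacle is verifying that the four-copy construction is a genuine coupling: one must check that, in each of $Z^{10}$, $Z^{01}$, $Z^{11}$, the total per-particle death rate adds up correctly (base particles dying as in $W$, each distinguished particle at its own rate $1$), so that the marginal law of each copy really is $Z$ started from the prescribed point, and that nothing degenerates when $j=k$ and both distinguished particles live in the same coordinate. Once the coupling is set up correctly, the remaining estimates are the short integrations displayed above.
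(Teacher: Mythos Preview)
Your argument is correct and is precisely the semigroup--coupling proof underlying Proposition~10.1.2 and Lemma~10.1.3 of \cite{BHJ}, which is all the paper invokes; the paper simply cites that reference rather than reproducing the construction. Your four-copy coupling and the resulting bound $2\int_0^\infty e^{-2t}\,dt=1$ are exactly the computation behind Lemma~10.1.3 there, so nothing is missing.
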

    \begin{proof}
      This follows from Proposition~10.1.2 and Lemma~10.1.3 in
      \cite{BHJ} as applied to
      a point process on a space with $r$ elements.
    \end{proof}
    
    Our goal is thus to bound $\E\big[\Aa h\big(C_1^{(n)},
    \ldots,C_r^{(n)}\big)\big]$ for any function $h$
    as in Lemma~\ref{lem:mvstein}.  We will
    abbreviate this vector to $\C=(C_1^{(n)},
    \ldots,C_r^{(n)})$.  The set of equivalence class of closed
    trails of length $k$, which we previously denoted $\Cc$,
    we will now call $\Cc^k$.
    \begin{align*}
      \E[\Aa h(\C)]
        &=\sum_{k=1}^r\sum_{s\in\Cc^k}\left(\frac{1}{[n]_k}\E[h(\C+e_k)-
        h(\C)]
          +\E\big[F_s\big(h(\C-e_k)-h(\C)\big)\big]\right)\\
        &=\sum_{k=1}^r\sum_{s\in\Cc^k}\left(\frac{1}{[n]_k}\E[h(\C+e_k)-
        h(\C)]
          +p_s\E\big[h(\C-e_k)-h(\C)\ \big|\ F_s=1\big]\right).
    \end{align*}
    For every $s\in\Cc^k$,
    we will construct on the same probability space as $\C$
    a random variable
    $\Y_s$ such that
    \begin{align}
      \Y_s \eqd \bigg(C_1^{(n)},\ldots,\ C_{k-1}^{(n)},\ 
      \sum_{\substack{t\in\Cc^k\\t\neq s}}F_t,\ C_{k+1}^{(n)},\ldots,\ 
         C_r^{(n)}\bigg)\ \bigg|\ F_s=1.\label{eq:coupledist}
    \end{align}
    Then
    \begin{align*}
      \big|\E[\Aa h(\C)]\big|&=
        \left|\sum_{k=1}^r\sum_{s\in\Cc^k}\left(\frac{1}{[n]_k}\E[h(\C+e_k)-h(\C)]
          +p_s\E[h(\Y_s)-h(\Y_s+e_k)]\right)\right|\\
          &\leq \sum_{k=1}^r
              \sum_{s\in\Cc^k}\frac{1}{[n]_k}\E\big|h(\C+e_k)-h(\C) + 
            h(\Y_s)-h(\Y_s+e_k)\big| \\
            &\phantom{\leq\quad}+
              \sum_{k=1}^r\sum_{s\in\Cc^k}\left|\frac{1}{[n]_k}-p_s\right|
            \E\big|h(\Y_s)-h(\Y_s+e_k)\big|.
    \end{align*}
    By \eqref{eq:mvsteinbound1} and \eqref{eq:mvsteinbound2}, respectively,
    \begin{align*}
      \big|h(\Y_s)-h(\Y+e_k)\big|&\leq 1,\\
      \big|h(\C+e_k)-h(\C) + 
            h(\Y_s)-h(\Y+e_k)\big|&\leq \norm{\C-\Y_s}_1.
    \end{align*}
    Hence
    \begin{align*}
      \big|\E[\Aa h(\C)]\big|&\leq
         \sum_{k=1}^r
              \sum_{s\in\Cc^k}\frac{1}{[n]_k}\E\norm{\C-\Y_s}_1
           +\sum_{k=1}^r\sum_{s\in\Cc^k}\left|\frac{1}{[n]_k}-p_s\right|\\
         &\leq
         \sum_{k=1}^r
              \sum_{s\in\Cc^k}\frac{1}{[n]_k}\E\norm{\C-\Y_s}_1
           +\sum_{k=1}^r|\Cc^k|\frac{k^2}{2n[n]_k}\\
         &= \sum_{k=1}^r
              \sum_{s\in\Cc^k}\frac{1}{[n]_k}\E\norm{\C-\Y_s}_1
           +O\left(\frac{r(2d-1)^r}{n}\right).
    \end{align*}
    Theorem~\ref{thm:multipoiquant}
    then follows
    from the following lemma:
    %%%%%%%%% 20110902 CHANGE - Toby
    %%%%% Changed the proposition to a lemma.  Also
    %%%%% added condition that $k<n^{1/6}$.
    \begin{lemma}\label{prop:multicoupling}
      There exists an absolute constant
      $C_3$ with the following property.
      For any $1\leq k\leq r$
      and $s\in\Cc^k$,
      let $\Y_s$ 
      be distributed as in \eqref{eq:coupledist}.
      There is a coupling of $\C$ and $\Y_s$ such that
      for all $n$, $k$, and $d\geq 2$ satisfying $k<n^{1/6}$,
      \begin{align}
        \E\norm{\C-\Y_s}_1\leq\frac{C_3r(2d-1)^r}{n}
          \label{eq:multicouplebound}
      \end{align}
    \end{lemma}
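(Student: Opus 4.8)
The plan is to follow the proof of Lemma~\ref{prop:coupling} almost verbatim; the only genuinely new feature is that the conditioned cycle $s$ has a single fixed length $k$, whereas the cycle counts we must track run over all lengths $1\le j\le r$. We may assume $r<n^{1/6}$, since otherwise $r(2d-1)^{2r}/n\ge 1$ for large $n$ and Theorem~\ref{thm:multipoiquant} is trivial after enlarging its constant.

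\emph{The coupling.} Fix $1\le k\le r$ and $s\in\Cc^k$, and let $G_n'$ be exactly the graph built in the proof of Theorem~\ref{thm:poiquant}: modify $\pi_1,\dots,\pi_d$ by the transposition algorithm so that $G_n'$ contains $s$, is distributed as $G_n$ conditioned on that event, and lives on the same probability space as $G_n$. For a closed trail $t$ write $F_t'=\one_{(G_n'\text{ contains }t)}$ and $p_t'=\E[F_t']$, and let $\Y_s$ be the vector whose $j$th coordinate is $\sum_{t\in\Cc^j}F_t'$ for $j\neq k$ and $\sum_{t\in\Cc^k,\,t\neq s}F_t'$ for $j=k$. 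Since $G_n'\eqd G_n\mid F_s=1$ and, on that event, the number of $k$-cycles of $G_n$ equals one plus $\sum_{t\neq s}F_t$, the vector $\Y_s$ has the law in \eqref{eq:coupledist}.

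\emph{Reduction to a length-by-length estimate.} From $C_j^{(n)}=\sum_{t\in\Cc^j}F_t$ and the triangle inequality,
\[
\E\norm{\C-\Y_s}_1\le p_s+\sum_{j=1}^r\E\Big|\sum_{t}(F_t-F_t')\Big|,
\]
the inner sum running over $\Cc^j$ (with $s$ deleted when $j=k$). It therefore suffices to bound the $j$th summand by $O\big(r(2d-1)^j/n\big)$: summing the resulting geometric series in $j$, together with $p_s\le 1/[n]_k=O(1/n)$, yields the claimed $O\big(r(2d-1)^r/n\big)$.

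\emph{The length-$j$ term.} Partition $\Cc^j$ relative to $s$ exactly as in the proof of Lemma~\ref{prop:coupling}: let $\Cc^j_{-1}$ be the trails of length $j$ containing one of the (at most $2k$) ``broken'' edges of $s$, and for $0\le i<j$ let $\Cc^j_i$ be those $t\in\Cc^j\setminus\Cc^j_{-1}$ sharing exactly $i$ edges with $s$. Lemma~\ref{lem:coupling} is a statement about a single edge and is insensitive to the length of $t$, so $F_t'=0$ on $\Cc^j_{-1}$ and $F_t'\ge F_t$ on each $\Cc^j_i$, $i\ge0$; the manipulation of \eqref{eq:mainbound} then gives
\[
\E\Big|\sum_{t}(F_t-F_t')\Big|\le\sum_{t\in\Cc^j_{-1}}p_t+\sum_{t\in\Cc^j_0}(p_t'-p_t)+\sum_{i=1}^{j-1}\sum_{t\in\Cc^j_i}p_t'.
\]
Now re-run the three estimates of Lemma~\ref{prop:coupling} with $t$ of length $j$ and $s$ of length $k$. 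There are at most $2k(n-2)[n-2]_{j-2}(2d-1)^{j-1}$ trails of length $j$ through a broken edge of $s$, so $\sum_{\Cc^j_{-1}}p_t=O\big(k(2d-1)^{j-1}/n\big)$. Using $p_t'\le 1/[n-k]_j$, $p_t\ge n^{-j}$, $|\Cc^j_0|\le a(d,j)[n]_j/(2j)$ and $[n]_j/[n-k]_j=1+O(jk/n)$ gives $\sum_{\Cc^j_0}(p_t'-p_t)=O\big((k+j)(2d-1)^j/n\big)$. Finally the assembly argument of Figure~\ref{fig:graphassembly}, counting a shared forest $H\subseteq s$ of $i$ edges and $p\le\min(i,k-i,j-i)$ components in $(k/p)\binom{i-1}{p-1}\binom{k-i-1}{p-1}$ ways and inserting the $j-i$ new edges of $t$ in at most $(p-1)!\,2^{p-1}\binom{j-i-1}{p-1}[n-p-i]_{j-p-i}(2d-1)^{j-i}$ ways, yields
\[
|\Cc^j_i|\le k(2d-1)^{j-i}[n-1-i]_{j-1-i}\left(1+\sum_{p=2}^{\infty}\frac1p\left(\frac{2e^2k^3}{(p-1)^2}\right)^{p-1}\frac{1}{[n-1-i]_{p-1}}\right),
\]
whose parenthesized factor is bounded by an absolute constant because $k,r<n^{1/6}$. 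With $p_t'\le 1/[n-k]_{j-i}$ this gives $\sum_{\Cc^j_i}p_t'=O\big(k(2d-1)^{j-i}/n\big)$, hence $\sum_{i=1}^{j-1}\sum_{\Cc^j_i}p_t'=O\big(k(2d-1)^{j-1}/n\big)$. Adding the three pieces shows the $j$th summand is $O\big(r(2d-1)^j/n\big)$, which completes the proof.

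\emph{Main obstacle.} As in the one-variable case, the only delicate point is the bound on $|\Cc^j_i|$: one must redo the ``lay out the shared components, then fill the gaps'' count of Figure~\ref{fig:graphassembly} while keeping the length $k$ of $s$ and the length $j$ of $t$ as independent parameters, and verify that the sum over $p$ stays uniformly bounded under $k,r<n^{1/6}$. Everything else is a routine re-indexing of the proof of Lemma~\ref{prop:coupling}, followed by a geometric summation over $j$.
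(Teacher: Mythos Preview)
Your proposal is correct and follows essentially the same route as the paper's own proof: the same coupling $G_n\leadsto G_n'$, the same definition of $\Y_s$, the same partition of $\Cc^j$ into $\Cc^j_{-1},\Cc^j_0,\Cc^j_1,\ldots$, and the same three estimates coming from Lemma~\ref{prop:coupling} with the length of $t$ decoupled from the length $k$ of $s$. The paper's proof is terser (it simply records the three $O$-bounds and says ``nearly identical calculations''), whereas you spell out the assembly count for $|\Cc^j_i|$ and make explicit the extra assumption $r<n^{1/6}$, which is indeed needed to keep the sum over $p$ uniformly bounded and is harmless since the enclosing Theorem~\ref{thm:multipoiquant} is trivial otherwise.
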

    \begin{proof}
      This proof is nearly identical to that
       of Lemma~\ref{prop:coupling}. %%%%%% 20110902 Toby changed Prop. to Lem.
      We construct as before the graph $G_n'$ and the random variables $F_t'$
      for $t\in\Cc^i$, $1\leq i\leq r$.  Then $\Y_s$ can be defined in
      the natural way as
      \begin{align*}
        \Y_s = \bigg(\sum_{t\in\Cc^1}F'_t,\ \ldots,\ \sum_{t\in\Cc^{k-1}}F'_t,\ 
          \sum_{\substack{t\in\Cc^k\\t\neq s}}F'_t,\ \sum_{t\in\Cc^{k+1}}F'_t,\ \ldots,
            \sum_{t\in\Cc^{r}}F'_t\bigg).
      \end{align*}
      We define $\Cc^i_{-1},\ldots,\Cc^i_{(i-1)\wedge k}$ as before, and it remains
      true that $F'_t\geq F_t$ if $t\in\Cc^i_j$ for $j\geq 0$,
      and $F'_t=0$ if $t\in\Cc^i_{-1}$.  Doing the calculation
      just as in \eqref{eq:mainbound},
      \begin{align*}
        \E\norm{\C-\Y_s}_1 &\leq\sum_{i=1}^r \left(
          \sum_{t\in \Cc_{-1}^i}p_t + \sum_{t\in\Cc_0^i}(p_t'-p_t)+
          \sum_{j=1}^{(i-1)\wedge k}\sum_{t\in\Cc_j^i}p_t'
        \right) + p_s.
      \end{align*}
      Nearly identical calculations as in Lemma~\ref{prop:coupling}
      show that %%%%%%20110902 Toby changed Prop. to Lem.
      \begin{align*}
        \sum_{t\in \Cc_{-1}^i}p_t&=O\left(\frac{k(2d-1)^{i-1}}{n}\right),\\
        \sum_{t\in\Cc_0^i}(p_t'-p_t)&=O\left(\frac{i(2d-1)^i}{n}\right),\\
        \sum_{t\in\Cc_j^i}p_t'&=O\left(\frac{k(2d-1)^{i-j}}{n}\right),
      \end{align*}
      which completes the proof.
    \end{proof}

  \renewcommand{\Y}[2][\infty]{Y_{#2}^{(#1)}}
  \subsection{Non-backtracking walks in random regular graphs}
  \label{sec:nbw}
  We now seek to transfer our results on cycles to closed 
  non-backtracking walks.  Note that we consider $G_n$ as an undirected
  graph when we discuss walks on it.  A non-backtracking walk is one
  that begins and ends at the same vertex, and that never follows
  an edge and immediately follows that same edge backwards.
  Let $\NBW[n]{k}$ denote the number of closed non-backtracking walks of
  length $k$ on $G_n$.  
  
  If the last step of a closed non-backtracking walk is anything other
  than the reverse of the first step, we say that the walk is \emph{cyclically
%%%20120516 Toby removed word closed from following line  
  non-backtracking}.  Cyclically non-backtracking walks on $G_n$
  are exactly the closed non-backtracking walks whose words
  are cyclically reduced.
  Cyclically non-backtracking walks are easier to analyze than plain
  non-backtracking
  walks because every cyclic and inverted cyclic shift of a cyclically
  non-backtracking walk remains cyclically non-backtracking.
  Let $\CNBW[n]{k}$ denote the number of closed cyclically
  non-backtracking walks of length $k$ on $G_n$.
  
  %%%%%%%%%%% START change 20110830 - TOBY 
  %%%%%%%% inserted reference to Friedman's terminology
  These notions sometimes go by different
  names.  In \cite{friedmanalon}, non-backtracking walks are called
  irreducible, and $\NBW[n]{k}$ is called
  $\mathrm{IrredTr}_k(G)$.  Cyclically non-backtracking walks
  are called strongly irreducible, and $\CNBW[n]{k}$
  is called $\mathrm{SIT}_k(G)$.
    \begin{figure}
      \begin{center}
        \begin{tikzpicture}[scale=1.8,vert/.style={circle,fill,inner sep=0,
              minimum size=0.15cm,draw},>=stealth]
            \draw[thick] (0,0) node[vert,label=right:1](s0) {}
                  -- ++(0,1cm) node[vert,label=below right:2](s1) {}
                  -- ++(0.707cm,0.707cm) node[vert,label=right:3](s2) {}
                  -- ++(-0.707cm,0.707cm) node[vert,label=right:4](s3) {}
                  -- ++(-0.707cm,-0.707cm) node[vert,label=left:5](s4) {}
                  --(s1);
        \end{tikzpicture}
      \end{center}
      \caption{The walk $1\to 2\to 3\to 4\to 5\to 2\to 1$ is non-backtracking,
      but not cyclically non-backtracking. Note that such walks have a
    ``lollipop'' shape.}
      \label{fig:cnbw}
    \end{figure}
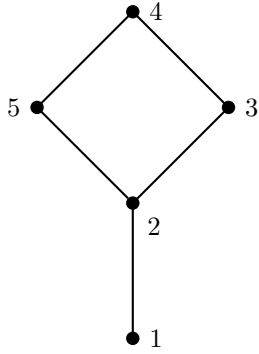

  %%%%%%%%%%% START change 20110830 - TOBY 
  %%%%%% Made extensive changes to this section.
  
  %%% 20110916a Toby changed ``Define'' to ``Recall''
  Recall that $(\Cy{k};\ k\geq 1)$ are
  independent
  Poisson random variables, with $\Cy{k}$ having mean $a(d,k)/2k$.
  Define
  \begin{align*}
    \CNBW{k} = \sum_{j|k} 2j\Cy{j}.
  \end{align*}
  For any cycle in $G_n$ of length $j|k$, we obtain $2j$ non-backtracking walks
  of length $k$ by choosing a starting point and direction and then
  walking around the cycle repeatedly.
  We start by decomposing $\CNBW[n]{k}$ into these walks
  plus the remaining ``bad'' walks that are not
  repeated cycles.  We denote these as $\BadW[n]{k}$, giving us
    \begin{align}
      \CNBW[n]{k}&= \sum_{j|k}2j\Cy[n]{j} + B_k^{(n)}.\label{eq:goodbad}
    \end{align}
  The results of Section~\ref{sec:cycles} give us a good understanding
  of $\Cy[n]{k}$.  Our goal now is to analyze $B_k^{(n)}$.
  Specifically, we will show that in the
  right asymptotic regime, it is likely to be zero,
  implying that $\CNBW[n]{k}$ will converge
  to $\CNBW{k}$.
  We start with
%%%%%%%%%%%% START change  20110830 - TOBY
%%%%%% Made lemma more precise and moved it around in the paper
   a more precise version of Lemma~\ref{lem:10}.
  \begin{lemma}\label{lem:10ext}
    With the setup of Lemma~\ref{lem:10},
    suppose that $\Gamma$ has $k$ vertices and 
    $e$ edges, with $e>k$.
    Then for all $n>e$,
    \begin{align*}
      \E\big[X_{\Gamma}^{(n)}\big]\leq \frac{1}{[n-k]_{e-k}}
    \end{align*}
  \end{lemma}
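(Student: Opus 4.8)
The plan is to mimic the computation in the proof of Lemma~\ref{lem:10}, keeping track of the exact constant rather than just the limit. Recall from \eqref{eq:10} that if $\Gamma$ has $v$ vertices and edge-multiplicities $e_i$ for $\pi_i$ (with $\sum_i e_i = e$), then
\begin{align*}
  \E\big[X_\Gamma^{(n)}\big] = [n]_v\prod_{i=1}^d\frac{1}{[n]_{e_i}}.
\end{align*}
That derivation never used the assumption that the vertices of a trail are distinct; as remarked at the end of the proof of Lemma~\ref{lem:10}, it applies whenever the word of each trail is cyclically reduced, which is exactly the setting here. So I would start by invoking \eqref{eq:10}, which now holds with $v = k$ (the hypothesis is that $\Gamma$ has $k$ vertices), giving $\E[X_\Gamma^{(n)}] = [n]_k\prod_{i=1}^d 1/[n]_{e_i}$.

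Next I would bound the product $\prod_{i=1}^d 1/[n]_{e_i}$ from above. The falling factorials $[n]_{e_i}$ are largest, as a product, when the $e_i$ are spread out, and smallest (so that $\prod 1/[n]_{e_i}$ is largest) when they are concentrated; in any case the elementary inequality $[n]_{a}[n]_{b} \geq [n]_{a+b}$ (valid for $n \geq a+b$, since $[n]_a [n]_b = [n]_{a+b}\binom{a+b}{a}/\binom{n-b}{a}\cdot(\dots)$ — more simply, $[n]_a[n]_b/[n]_{a+b} = \binom{n-b}{a}/\binom{?}{?}\geq 1$) iterated over $i=1,\dots,d$ yields $\prod_{i=1}^d [n]_{e_i} \geq [n]_{e}$, hence $\prod_{i=1}^d 1/[n]_{e_i} \leq 1/[n]_e$. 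Therefore
\begin{align*}
  \E\big[X_\Gamma^{(n)}\big] \leq \frac{[n]_k}{[n]_e} = \frac{1}{[n-k]_{e-k}},
\end{align*}
where the last equality is the identity $[n]_e = [n]_k \cdot [n-k]_{e-k}$, valid since $e > k$ and $n > e$. This is exactly the claimed bound.

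The only mild subtlety — and the step I would be most careful about — is justifying $\prod_{i=1}^d [n]_{e_i} \geq [n]_e$ cleanly, i.e. checking that repeatedly merging two falling factorials only decreases the product $[n]_{a}[n]_b \geq [n]_{a+b}$. This follows because $[n]_a[n]_b = \prod_{j=0}^{a-1}(n-j)\prod_{j=0}^{b-1}(n-j)$ while $[n]_{a+b} = \prod_{j=0}^{a+b-1}(n-j)$, and term-by-term (after reindexing) each factor $n-j$ with $0\le j\le b-1$ in the first product dominates the corresponding factor $n-(a+j)$ in the second, while the remaining factors agree; here one uses $n \geq a+b$ so that all factors are positive. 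Everything else is a bookkeeping identity on Pochhammer symbols. I expect no real obstacle; the content is just making the $n\to\infty$ argument of Lemma~\ref{lem:10} quantitative, together with recording that the degree-$\geq 2$ structure of $\Gamma$ (which forces $e \geq v$, used implicitly in Lemma~\ref{lem:10}) is replaced here by the stronger explicit hypothesis $e > k = v$.
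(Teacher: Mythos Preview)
Your proof is correct and follows exactly the route the paper intends: invoke the exact formula \eqref{eq:10}, bound $\prod_i [n]_{e_i}\ge [n]_e$ via $[n]_a[n]_b\ge [n]_{a+b}$, and simplify $[n]_k/[n]_e=1/[n-k]_{e-k}$. The paper's own proof is simply the one-line ``This is apparent from \eqref{eq:10},'' so you have supplied precisely the details the paper omits.
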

  \begin{proof}
    This is apparent from \eqref{eq:10}.
  \end{proof}    
%%%%%%%%%%%% START addition 20110830 - TOBY
  \begin{prop}\label{prop:badwalks}
    For all $n\geq 2k$,
    \begin{align*}
      \E\big[B_k^{(n)}\big]
      &\leq \sum_{i=1}^{k-1}\frac{a(d,k)k^{2i+2}}{[n-k]_i}.
    \end{align*}
  \end{prop}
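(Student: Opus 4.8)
The plan is to bound $\E[B_k^{(n)}]$ by summing $\E[X_\Gamma^{(n)}]$ over all categories $\Gamma$ of single closed trails of length $k$ whose associated graph is \emph{not} a single cycle traversed repeatedly. A closed trail of length $k$ whose word is cyclically reduced gives rise, via its category graph $\Gamma$, to a graph in which every vertex has degree at least $2$, so $\Gamma$ has $e$ edges and at most $k$ vertices, with $e\geq k$. The ``good'' walks in the decomposition \eqref{eq:goodbad} are exactly those whose category graph is a cycle (so $e=k$, $k$ vertices, but with fewer than $k$ distinct vertices when $j<k$ is a proper divisor); the ``bad'' walks $B_k^{(n)}$ are counted by the categories with $e>k$, equivalently with at most $k-1$ vertices — more precisely, with exactly $k-i$ vertices for some $1\leq i\leq k-1$, where $i=e-v$ measures the ``excess.'' For each such $\Gamma$ with $k-i$ vertices and hence $e=k-i+i=k$\ldots wait — I should be careful: if $\Gamma$ has $v=k-i$ vertices then since the trail has length $k$ we still have $e\le k$, and $e\geq v$; the bad case is $v<k$, and I will apply Lemma~\ref{lem:10ext} in the form appropriate to $v$ vertices and $e$ edges with $e-v\geq 1$.

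First I would set up the counting: fix $i$ with $1\leq i\leq k-1$ and count categories $\Gamma$ arising from a closed trail of length $k$ with exactly $k-i$ distinct vertices (so the ``collapsing'' of the $k$ trail-vertices onto $k-i$ blocks has $i$ coincidences). To build such a $\Gamma$, one chooses a partition of the $k$ positions $s_0,\dots,s_{k-1}$ into $k-i$ blocks and an edge-labeling; crudely, the number of set partitions of $k$ elements into $k-i$ blocks is at most $k^{2i}$ (each of $i$ ``merge'' steps has at most $k^2$ choices of which pair to merge), and the word of the trail is cyclically reduced so there are at most $a(d,k)$ choices of edge-labels — actually fewer, but $a(d,k)$ is a safe bound. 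Then by Lemma~\ref{lem:10ext}, each such $\Gamma$, having $v=k-i$ vertices and $e$ edges with $e-v\geq i$ (in fact here $e$ can be taken $=k-i+(\text{something})$; at worst $e-v\geq i$ since the trail revisits, giving at least $i$ extra edge-incidences), satisfies
\begin{align*}
  \E\big[X_\Gamma^{(n)}\big]\leq \frac{1}{[n-v]_{e-v}}\leq \frac{1}{[n-k]_{i}},
\end{align*}
valid for $n>e$, which holds when $n\geq 2k$ since $e\leq k$. Multiplying the count by this bound and allowing an extra factor of $k^2$ for slack (to absorb the difference between $k^{2i}$ and the cleaner exponent $2i+2$ claimed, and the $1/[n-k]_i$ versus the true $1/[n-v]_{e-v}$) gives the term $a(d,k)k^{2i+2}/[n-k]_i$, and summing over $i$ from $1$ to $k-1$ yields the proposition.

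The main obstacle is getting the combinatorial bookkeeping of the ``bad'' categories clean enough that the crude estimates land exactly on the exponent $2i+2$ and the falling factorial $[n-k]_i$ in the statement — in particular, correctly tracking the relation between the number of vertex-coincidences $i$, the number of components/vertices $v=k-i$, and the edge-excess $e-v$, so that Lemma~\ref{lem:10ext} can be invoked with the right parameters. Once the dictionary ``$i$ coincidences $\Rightarrow$ at least $i$ edge-excess, at most $k^{2i}$ partitions, at most $a(d,k)$ words'' is pinned down, the rest is routine; I would double-check the edge cases ($p$ components, trails that wind around sub-cycles multiple times) to make sure no bad walk is omitted from the sum and that the $n\geq 2k$ hypothesis suffices for every application of Lemma~\ref{lem:10ext}.
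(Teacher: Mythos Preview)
Your proposal has the right overall shape---classify bad-walk categories by an excess parameter $i$, bound the number of categories with a given $i$, and apply Lemma~\ref{lem:10ext}---but the dictionary you set up is wrong, and this is not just a bookkeeping issue.

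The correct criterion for a walk to be ``bad'' is that its category graph $\Gamma$ has strictly more edges than vertices, i.e.\ $e>v$. You write instead that the bad walks are those with $e>k$, or equivalently $v<k$; neither is right. Since the trail has $k$ steps one always has $e\le k$, so $e>k$ never happens. And $v<k$ does not imply bad: a walk that traces a $j$-cycle $k/j$ times has $v=e=j<k$ and is \emph{good}. The right parameter is therefore the edge-excess $i:=e-v$, not the vertex-defect $k-v$. These two quantities are not equal in general (a walk that reuses a labeled edge has $e<k$, so $e-v<k-v$), and your claimed implication ``$i$ vertex-coincidences $\Rightarrow$ at least $i$ edge-excess'' actually goes the wrong way: from $e\le k$ one gets $e-v\le k-v$, i.e.\ at \emph{most} $k-v$ edge-excess. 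Consequently the bound $\E[X_\Gamma^{(n)}]\le 1/[n-v]_{e-v}$ cannot be pushed down to $1/[n-k]_{k-v}$ when $e<k$, and your per-term estimate fails precisely on the categories where edges are repeated.

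The paper's proof parametrizes directly by $i=e-v$. For each cyclically reduced word $w$ (of which there are $a(d,k)$), it invokes Lemma~18 of \cite{LP}, which says that the number of category graphs with word $w$ and edge-excess exactly $i$ is at most $k^{2i+2}$; this is the combinatorial input that gives the exponent $2i+2$ cleanly, and it is not the same statement as a crude $k^{2i}$ bound on vertex partitions. With that count in hand, Lemma~\ref{lem:10ext} (using $v\le k$, hence $1/[n-v]_i\le 1/[n-k]_i$, and $n\ge 2k>e$) gives the term $a(d,k)k^{2i+2}/[n-k]_i$. So the missing ingredient is exactly the switch to edge-excess together with the Linial--Puder counting lemma.
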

  \begin{proof}
    Any closed cyclically non-backtracking walk can be thought of
    as a trail, with repeated vertices in the trail now allowed.
    Such a walk
    is counted by $B_k^{(n)}$ if and only if the graph of its category 
    has more edges than vertices.  Let $\Gg_d$ consist
    of all graphs of categories of a closed trail of length $k$ 
%%%%%%%%Ioana added the length
    that have more edges than vertices.  Then 
    \begin{align*}
      B_k^{(n)} = \sum_{\Gamma\in\Gg_d}X_\Gamma^{(n)},
    \end{align*}
    using the notation of Section~\ref{sec:cycles}.
    To use Lemma~\ref{lem:10ext}, we classify the graphs in
    $\Gg_d$ according how to many more edges than vertices they contain:
    \begin{align*}
      \E\big[B_k^{(n)}\big] &\leq \sum_{i=1}^{\infty}
      \big|\{\Gamma\in\Gg_d:\text{$\Gamma$ has exactly
      $i$ more edges than vertices}\}\big|\frac{1}{[n-k]_i}.
    \end{align*}
    %2012426 Toby clarified following sentences
    A graph in $\Gg_d$ has at most $k$ edges, so the
    terms with $i\geq k$ in this sum are zero.
    By Lemma~18 in \cite{LP}, for each word $w\in\Ww$, the
    number of graphs in $\Gg_d$ with word $w$ and with
    $i$ more edges than vertices is at most
    $k^{2i+2}$, completing the proof.
  \end{proof}
  It is worth noting that this proposition
  fails if the word ``cyclically'' is removed from the definition
  of $B_k^{(n)}$.  The problem is that
  walks that are non-backtracking but not cyclically non-backtracking
  %%%%%%%%% Change 20110902 --Toby
  %%%% removed clause ``or even more vertices than edges.''
  can have as many vertices as edges.
  
  %%%%%%%% CHANGE 20110902 -- Toby
  %%%%  Fixed problem about only half the corollary
  %%%% being true by removing the false half.
  \begin{cor}\label{cor:badwalks}
    There is an absolute constant $C_5$ such that
    for all $n$, $r$, and $d\geq 2$,
    \begin{align*}
      \P[\BadW[n]{k} > 0\text{ for some $k\leq r$}]&\leq
        \frac{C_5r^4(2d-1)^{r}}{n}.
    \end{align*}
  \end{cor}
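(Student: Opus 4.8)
The plan is to bound $\P[\BadW[n]{k}>0\text{ for some }k\leq r]$ by a union bound over $k$, and then for each fixed $k$ to use Markov's inequality to replace the probability of the integer-valued random variable $\BadW[n]{k}$ being positive with its expectation $\E[\BadW[n]{k}]$. This reduces the problem to summing the estimate of Proposition~\ref{prop:badwalks} over $1\leq k\leq r$.

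First I would write
\[
  \P[\BadW[n]{k}>0\text{ for some }k\leq r]
  \leq \sum_{k=1}^r \P[\BadW[n]{k}>0]
  \leq \sum_{k=1}^r \E[\BadW[n]{k}],
\]
where the last step uses that $\BadW[n]{k}$ is a nonnegative integer-valued random variable, so $\P[\BadW[n]{k}>0]=\P[\BadW[n]{k}\geq 1]\leq \E[\BadW[n]{k}]$. For each $k$ with $n\geq 2k$, Proposition~\ref{prop:badwalks} gives
\[
  \E[\BadW[n]{k}] \leq \sum_{i=1}^{k-1}\frac{a(d,k)k^{2i+2}}{[n-k]_i}.
\]
Next I would estimate this inner sum. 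Since $a(d,k)\leq (2d-1)^k + 2d \leq C(2d-1)^k$ for $d\geq 2$ by \eqref{eq:adkbounds}, and since $[n-k]_i \geq (n-2k)^i \geq (n/2)^i$ when $n\geq 4k$, each term is at most $a(d,k)k^{2i+2}(2/n)^i$. Pulling out $k^2$, the sum over $i\geq 1$ is dominated by a geometric-type series $\sum_{i\geq 1}(2k^2/n)^i$; provided $n$ is at least a constant multiple of $k^2$ (say $n\geq 4k^2$), this is $O(k^2/n)$, giving $\E[\BadW[n]{k}] = O\big(k^4(2d-1)^k/n\big)$. Summing over $k\leq r$ and bounding each factor by its value at $k=r$ yields $O\big(r^5(2d-1)^r/n\big)$, and absorbing one factor of $r$ by noting $(2d-1)^k$ is geometric in $k$ (so $\sum_{k\leq r}(2d-1)^k = O((2d-1)^r)$) sharpens this to $O\big(r^4(2d-1)^r/n\big)$, which is the claimed bound.

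The main obstacle is handling the range of $(k,d,n)$ where $n$ is \emph{not} large compared to $k^2$ (so the geometric sum does not converge), or where $n\leq 2k$ so Proposition~\ref{prop:badwalks} does not apply directly. As in the proof of Theorem~\ref{thm:poiquant}, the resolution is that in this regime the claimed bound $C_5 r^4(2d-1)^r/n$ exceeds $1$ for all sufficiently large $n$ (indeed once $n < r^4(2d-1)^r$, which covers $n\leq 2r$ and $n< 4r^2$ for large $r$), so the inequality holds trivially for an appropriately large choice of $C_5$; one then enlarges $C_5$ to absorb the finitely many remaining small cases. This case split, together with the routine geometric estimate above, completes the proof.
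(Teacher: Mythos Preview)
Your proposal is correct and follows essentially the same route as the paper: union bound plus Markov's inequality to reduce to $\sum_{k\leq r}\E[\BadW[n]{k}]$, a geometric-series estimate on the sum from Proposition~\ref{prop:badwalks} to obtain $\E[\BadW[n]{k}]=O(k^4(2d-1)^k/n)$, the observation that $\sum_{k\leq r}k^4(2d-1)^k=O(r^4(2d-1)^r)$ since $2d-1\geq 3$, and a trivial-case argument when $n$ is small relative to a power of $r$. The paper makes the cut at $r\geq n^{1/4}$ (so $r^4(2d-1)^r/n\geq (2d-1)^r>1$ immediately), which is slightly cleaner than your $n<4r^2$ threshold, but the idea is identical. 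One wording slip: your phrase ``exceeds $1$ for all sufficiently large $n$'' is backwards---you mean that in the bad regime the bound $r^4(2d-1)^r/n$ already exceeds $1$ (for all but finitely many $(r,d,n)$), not that it does so as $n\to\infty$.
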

  \begin{proof}
  Bounding the expression from Proposition~\ref{prop:badwalks} by a geometric
    series,
    \begin{align*}
      \E\big[B_{r}^{(n)}\big] &\leq
        \frac{a(d,r)r^4}{n-r}
        \frac{n-2r}{n-2r-r^2}.
    \end{align*}
    If $r\geq n^{1/4}$, then $r^4(2d-1)^r/n>1$, and 
    the corollary is trivially true for any
    $C_5\geq 1$.  Thus we may assume that $r<n^{1/4}$.
    In this case, the expression $(n-2r)/(n-2r-r^2)$ is
    bounded by an absolute constant.  This and \eqref{eq:adkbounds}
    imply that for some constant $C_4$,
    \begin{align*}
            \E\big[B_{r}^{(n)}\big] &\leq \frac{C_4r^4(2d-1)^{r}}{n}.
    \end{align*}
    Since $\BadW[n]{k}$ is integer-valued,
  \begin{align*}
    \P[\BadW[n]{k} > 0\text{ for some $k\leq r_n$}]
      &\leq \sum_{k=1}^{r}\P[\BadW[n]{k} > 0]\leq \sum_{k=1}^{r}\E[\BadW[n]{k}]\\
      &\leq \sum_{k=1}^{r}\frac{C_4k^4(2d-1)^k}{n}
      \leq \frac{C_5r^4(2d-1)^{r}}{n}
  \end{align*}
  for some choice of the constant $C_5$.
  \end{proof}
  
  %%%%%%%%%%% END addition 20110830    

%%%%%%%%%%%% START addition 20110814 - TOBY
%%%%%%%%%%%% 20110830 TOBY -- some text cut

  The following fact follows directly from the definition of total variation distance,
   and we omit its proof.
  \begin{lemma}\label{lem:dtv}%2012426 Toby corrected a typo
    Let $X$ and $Y$ be random variables on a metric space $S$,
    and let $T$ be any metric space.
    For any measurable $f\colon S\to T$,
    \begin{align*}
      \dtv(f(X),f(Y))\leq \dtv(X,Y).
    \end{align*}
  \end{lemma}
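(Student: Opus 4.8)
The plan is to unwind the definition of total variation distance recalled just before the statement: for $S$-valued random variables, $\dtv(X,Y)$ is the supremum of $\bigl|\P[X\in A]-\P[Y\in A]\bigr|$ over all measurable $A\subseteq S$, and similarly $\dtv(f(X),f(Y))$ is the supremum of $\bigl|\P[f(X)\in B]-\P[f(Y)\in B]\bigr|$ over all measurable $B\subseteq T$. First I would fix an arbitrary measurable set $B\subseteq T$ and observe that $\{f(X)\in B\}=\{X\in f^{-1}(B)\}$, and likewise with $Y$ in place of $X$. Because $f$ is measurable, $f^{-1}(B)$ is a measurable subset of $S$, so it is an admissible event in the supremum defining $\dtv(X,Y)$. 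Hence
\begin{align*}
  \bigl|\P[f(X)\in B]-\P[f(Y)\in B]\bigr|
  =\bigl|\P[X\in f^{-1}(B)]-\P[Y\in f^{-1}(B)]\bigr|
  \leq \dtv(X,Y).
\end{align*}
Taking the supremum of the left-hand side over all measurable $B\subseteq T$ yields $\dtv(f(X),f(Y))\leq\dtv(X,Y)$, which is the claim.

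There is essentially no obstacle here: the only hypothesis that needs to be invoked is the measurability of $f$, which is exactly what guarantees that every event of the form $\{f(X)\in B\}$ is realized as $\{X\in A\}$ for some measurable $A\subseteq S$. Thus the family of events over which one optimizes in computing $\dtv(f(X),f(Y))$ is the image under $f^{-1}$ of a subfamily of the events used for $\dtv(X,Y)$, and the inequality is just monotonicity of a supremum with respect to shrinking the index set. This is the form in which the lemma will be applied later, with $f$ either a coordinate projection or the linear map sending the vector of short-cycle counts to the vector of CNBW counts via \eqref{eq:goodbad}.
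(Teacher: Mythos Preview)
Your argument is correct and is exactly the one-line unwinding of the definition that the paper has in mind: the paper omits the proof entirely, stating only that the lemma ``follows directly from the definition of total variation distance,'' and your proposal supplies precisely that direct verification.
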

%%%%%%%%%%% END addition 20110814    
    
  %%%%%%%%%%% START addition 20110830 -- TOBY
  It is now straightforward to give a result on
  non-backtracking walks analogous to Theorem~\ref{thm:multipoiquant}.
  \begin{prop}\label{thm:CNBWquant} %2012426 Toby demoted this to a proposition
    There is a constant $C_6$ such that for all
    $n$, $r$, and $d\geq 2$,
    \begin{align*}
      \dtv\left( \big( \CNBW[n]{k};\ 1\leq k\leq r  \big), 
        \big( \CNBW[\infty]{k};\ 1\leq k\leq r   \big)\right)
        \leq \frac{C_6(2d-1)^{2r}}{n}.
    \end{align*}
  \end{prop}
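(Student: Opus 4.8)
The plan is to deduce this from Theorem~\ref{thm:multipoiquant}, the decomposition \eqref{eq:goodbad}, and the bound on bad walks in Corollary~\ref{cor:badwalks}, gluing them together with Lemma~\ref{lem:dtv} and the triangle inequality for total variation distance. First I would introduce the linear (hence measurable) map $f\colon\ZZ_+^r\to\ZZ_+^r$ whose $k$th coordinate is $f(x_1,\dots,x_r)_k=\sum_{j\mid k}2jx_j$. By the definition of $\CNBW[\infty]{k}$ we have $f\big(\Cy[\infty]{1},\dots,\Cy[\infty]{r}\big)=\big(\CNBW[\infty]{k};\ 1\le k\le r\big)$, and by \eqref{eq:goodbad} we have $f\big(\Cy[n]{1},\dots,\Cy[n]{r}\big)=\big(\CNBW[n]{k}-\BadW[n]{k};\ 1\le k\le r\big)$.

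Next, applying Lemma~\ref{lem:dtv} with this $f$ to Theorem~\ref{thm:multipoiquant} gives
\[
\dtv\Big(f\big(\Cy[n]{1},\dots,\Cy[n]{r}\big),\ f\big(\Cy[\infty]{1},\dots,\Cy[\infty]{r}\big)\Big)\le\frac{C_2(2d-1)^{2r}}{n}.
\]
To pass from $f\big(\Cy[n]{1},\dots,\Cy[n]{r}\big)$ to $\big(\CNBW[n]{k};\ k\le r\big)$, observe that on the event $E=\{\BadW[n]{k}=0\text{ for all }k\le r\}$ these two random vectors coincide (using that $\BadW[n]{k}\ge0$), so the total variation distance between their laws is at most $\P[E^c]$, which by Corollary~\ref{cor:badwalks} is at most $C_5r^4(2d-1)^r/n$.

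Combining these two estimates by the triangle inequality yields
\[
\dtv\Big(\big(\CNBW[n]{k};\ k\le r\big),\ \big(\CNBW[\infty]{k};\ k\le r\big)\Big)\le\frac{C_5r^4(2d-1)^r}{n}+\frac{C_2(2d-1)^{2r}}{n},
\]
and since $d\ge2$ forces $2d-1\ge3$, we have $r^4(2d-1)^r\le\big(\sup_{m\ge1}m^43^{-m}\big)(2d-1)^{2r}$ with the supremum a finite absolute constant; absorbing everything into a single $C_6$ finishes the argument. The only point requiring any care is this last bookkeeping step, namely that the factor $(2d-1)^{2r}$ genuinely dominates the bad-walk contribution $r^4(2d-1)^r$ uniformly over all $r$ and all $d\ge2$; everything else is a direct assembly of the previously established bounds.
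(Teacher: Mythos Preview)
Your proof is correct and follows essentially the same approach as the paper: introduce the ``good walks'' $G_k^{(n)}=\sum_{j\mid k}2j\Cy[n]{j}$ (your $f(\Cy[n]{1},\dots,\Cy[n]{r})$), apply Lemma~\ref{lem:dtv} and Theorem~\ref{thm:multipoiquant} to control the distance from $(G_k^{(n)})$ to $(\CNBW{k})$, use Corollary~\ref{cor:badwalks} to control the passage from $(G_k^{(n)})$ to $(\CNBW[n]{k})$, and then absorb $r^4(2d-1)^r$ into $(2d-1)^{2r}$ using $2d-1\ge 3$. The only cosmetic differences are that the paper writes out the probability bound for a single set $A$ rather than invoking the triangle inequality, and phrases the final bookkeeping as ``for all but finitely many $r$'' rather than via the explicit supremum $\sup_{m\ge1}m^43^{-m}$.
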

  \begin{proof}
  We start by recalling the decomposition of
  $\CNBW[n]{k}$ into good and bad walks given in \eqref{eq:goodbad}.
  Let $\GoodW[n]{k}=\sum_{j|k}2j\Cy[n]{j}$, so that
  $\CNBW[n]{k}=\GoodW[n]{k}+\BadW[n]{k}$.
  By Lemma~\ref{lem:dtv} and Theorem~\ref{thm:multipoiquant},
  \begin{align}
    \dtv\left( \big( \GoodW[n]{k};\ 1\leq k\leq r  \big), 
    \big( \CNBW[\infty]{k};\ 1\leq k\leq r   \big)  \right)
      &\leq \dtv\left(\big(\Cy[n]{k};\ 1\leq k\leq r\big),\ 
        \big(\Cy{k};\ 1\leq k\leq r\big)\right)\nonumber\\
      &\leq \frac{C_2(2d-1)^{2r}}{n}.\label{eq:gwbound}
  \end{align}
  Then for any $A\subset\ZZ_+^r$,
  \begin{align*}
    &\P\left[\big( \CNBW[n]{k};\ 1\leq k\leq r  \big)\in A
    \right]-\P\left[\big( \CNBW{k};\ 1\leq k\leq r  \big)\in A
    \right]  \\\quad&\leq\P\left[\big( \GoodW[n]{k};\ 1\leq k\leq r  \big)\in A
    \right]+\P\left[\bigcup_{k=1}^r\big\{\BadW[n]{k}>0\big\}
    \right]-\P\left[\big( \CNBW{k};\ 1\leq k\leq r  \big)\in A
    \right]\\\quad&\leq\frac{C_2(2d-1)^{2r}}{n}+\frac{C_5r^4(2d-1)^r}{n}
  \end{align*}
  by \eqref{eq:gwbound} and Corollary~\ref{cor:badwalks}.
  %%%%%%%%%%20110902 Toby elaborated here
 %%%%%%% Ioana corrected Toby's statement 0908
 %%%%% 20110916a Toby corrected a typo
  For any $n$ and $d$, since $d \geq 2$ and thus $2d-1 \geq 3$, 
  the first term is larger
      than the second for all but at most a finite number of $r$s,
      bounded independently of $n$ and $d$. Therefore there exists
      a constant $C_6$ satisfying the conditions of the theorem.
  \end{proof}
  \begin{cor}\label{cor:dfixed}
    For any fixed $r$ and $d\geq 2$,
    \begin{align*}
      (\CNBW[n]{1},\ldots,\CNBW[n]{r})\toL
      (\CNBW{1},\ldots,\CNBW{r})
    \end{align*}
    as $n\to\infty$.
  \end{cor}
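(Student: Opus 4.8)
The plan is to read this off directly from the quantitative estimate in Proposition~\ref{thm:CNBWquant}. Fix $r$ and $d\geq 2$. Applying that proposition for each $n$ gives
\[
\dtv\left( \big( \CNBW[n]{k};\ 1\leq k\leq r  \big),\ \big( \CNBW[\infty]{k};\ 1\leq k\leq r   \big)\right) \leq \frac{C_6(2d-1)^{2r}}{n}.
\]
With $r$ and $d$ held fixed, the numerator $C_6(2d-1)^{2r}$ is a constant independent of $n$, so the right-hand side tends to $0$ as $n\to\infty$; that is, the law of $(\CNBW[n]{1},\ldots,\CNBW[n]{r})$ converges in total variation to the law of $(\CNBW{1},\ldots,\CNBW{r})$.

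It then remains only to recall that convergence in total variation is stronger than weak convergence. Indeed, for any bounded continuous $f\colon\RR^r\to\RR$,
\[
\big|\E f\big(\CNBW[n]{1},\ldots,\CNBW[n]{r}\big)-\E f\big(\CNBW{1},\ldots,\CNBW{r}\big)\big| \leq 2\norm{f}_\infty\,\dtv\left( \big( \CNBW[n]{k};\ 1\leq k\leq r  \big),\ \big( \CNBW[\infty]{k};\ 1\leq k\leq r   \big)\right),
\]
and the right-hand side goes to $0$ by the previous paragraph. Hence $(\CNBW[n]{1},\ldots,\CNBW[n]{r})\toL(\CNBW{1},\ldots,\CNBW{r})$, which is the assertion. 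There is no genuine obstacle at this stage: all of the substantive work — the size-biased coupling of $G_n$ with its conditioned version $G_n'$, the bookkeeping of overlapping trails that controls $\E\norm{\C-\Y_s}_1$, and the bad-walk estimate — has already been carried out in Theorem~\ref{thm:multipoiquant}, Corollary~\ref{cor:badwalks}, and Proposition~\ref{thm:CNBWquant}.
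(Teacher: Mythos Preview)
Your proof is correct and is exactly the intended argument: the paper states this as an immediate corollary of Proposition~\ref{thm:CNBWquant} with no separate proof, and your write-up simply makes explicit that the total variation bound there tends to zero for fixed $r,d$ and that total variation convergence implies convergence in law.
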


  To achieve a version of the above
  corollary that holds
  when $d$ grows, we need to center and scale our
  random variables $\CNBW[n]{k}$.
  %%%%%%%%%%% END addition 20110830    
%%%%%%%%%%%% change 20110814 -- TOBY
%%%%%% Changed the scaling in the following theorem to make things
%%%%%% standard across the paper.   Referred
%%%%%% to Lemma~\ref{lem:dtv} instead of arguing directly.
  \begin{prop}\label{thm:dgrows}  %2012426 Toby demoted to a prop.
    Let $r$ be fixed, and suppose that $d=d(n)\to\infty$ as $n\to\infty$,
    and that $(2d-1)^{2r}=o(n)$.
    Let $\sCNBW[n]{k}=(2d-1)^{-k/2}
      (\CNBW[n]{k}-\E[\CNBW{k}])$.
     %%%%%%%%% 20110902 Toby avoided the notation \prod N(0, 2k)
%%%%%%%%%% 0908 Ioana: Yay, Toby! =)
   Let $Z_1,\ldots,Z_r$ be independent normal random variables
   with $\E Z_k=0$ and $\E Z_k^2 = 2k$.
    Then as $n\to\infty$,
    \begin{align*}
      \big(\sCNBW[n]{1},\ 
       \ldots,\ 
      \sCNBW[n]{
      %1changed to r by Soumik
      r}\big)\toL (Z_1,\ldots,Z_r).
    \end{align*}
  \end{prop}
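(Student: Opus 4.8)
The plan is to pass from the true cycle counts to genuinely independent Poisson variables and then apply the classical central limit theorem for a Poisson variable whose mean diverges. First I would invoke Proposition~\ref{thm:CNBWquant}: it gives $\dtv\big((\CNBW[n]{k})_{1\le k\le r},(\CNBW{k})_{1\le k\le r}\big)\le C_6(2d-1)^{2r}/n$, which tends to $0$ under the hypothesis $(2d-1)^{2r}=o(n)$; here $(\Cy{k})$ are the independent Poissons with $\E\Cy{k}=a(d,k)/2k$ appearing in $\CNBW{k}=\sum_{j\mid k}2j\Cy{j}$. Since $\sCNBW[n]{k}$ is the image of $(\CNBW[n]{j})_{j\le r}$ under the fixed affine map $x\mapsto (2d-1)^{-k/2}(x_k-\E[\CNBW{k}])$, Lemma~\ref{lem:dtv} shows that the law of $(\sCNBW[n]{1},\dots,\sCNBW[n]{r})$ is within total variation $o(1)$ of the law of the vector obtained by applying the same map to $(\CNBW{j})_{j\le r}$. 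Because total-variation convergence to $0$ preserves weak limits, it then suffices to show that this latter vector, whose $k$th coordinate is $(2d-1)^{-k/2}\sum_{j\mid k}\big(2j\Cy{j}-a(d,j)\big)$, converges in law to $(Z_1,\dots,Z_r)$.

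Next I would isolate the dominant term, writing the $k$th coordinate as $W_k+R_k$ with $W_k=(2d-1)^{-k/2}(2k\Cy{k}-a(d,k))$ collecting the $j=k$ contribution and $R_k=(2d-1)^{-k/2}\sum_{j\mid k,\,j<k}(2j\Cy{j}-a(d,j))$ collecting the proper divisors of $k$. The remainder is negligible: the summands of $R_k$ are independent and centered, so $\E[R_k^2]=(2d-1)^{-k}\sum_{j\mid k,\,j<k}2j\,a(d,j)$, and since $a(d,j)\le 2(2d-1)^j$ while $j\le k/2$ for every proper divisor $j$ of $k$, this is $O\big(k^2(2d-1)^{-k/2}\big)\to 0$ as $d\to\infty$. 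Hence $(R_1,\dots,R_r)\to 0$ in $L^2$, and in particular in probability.

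For the dominant terms, I would use the expansion $a(d,k)=(2d-1)^k+O(d)$ from \eqref{eq:adkbounds}, which gives $a(d,k)/(2d-1)^k\to 1$ and $\lambda_k:=a(d,k)/2k\to\infty$ for each fixed $k$. Writing $W_k=\big[(2d-1)^{-k/2}\,2k\sqrt{\lambda_k}\big]\cdot(\Cy{k}-\lambda_k)/\sqrt{\lambda_k}$, the Poisson central limit theorem $(\Cy{k}-\lambda_k)/\sqrt{\lambda_k}\toL N(0,1)$ together with $(2d-1)^{-k/2}\,2k\sqrt{\lambda_k}=\sqrt{2k}\,\sqrt{a(d,k)/(2d-1)^k}\to\sqrt{2k}$ yields $W_k\toL N(0,2k)$, i.e.\ $W_k\toL Z_k$. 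Since $\Cy{1},\dots,\Cy{r}$ are independent, so are $W_1,\dots,W_r$ for every $n$, so the characteristic function of $(W_1,\dots,W_r)$ factorizes and converges to $\prod_{k=1}^r\E[e^{it_kZ_k}]$; thus $(W_1,\dots,W_r)\toL(Z_1,\dots,Z_r)$ jointly. Combining this with $(R_1,\dots,R_r)\to 0$ in probability via Slutsky's theorem gives $(W_k+R_k)_{k\le r}\toL(Z_1,\dots,Z_r)$, and the total-variation reduction from the first step then finishes the proof.

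I expect no serious obstacle here once Proposition~\ref{thm:CNBWquant} is available; the one place that needs a little care is the bound on $R_k$, which is precisely where one uses that, after rescaling by $(2d-1)^{-k/2}$, the closed cyclically non-backtracking walks of length $k$ are dominated by the $2k$-fold traversals of cycles of length exactly $k$.
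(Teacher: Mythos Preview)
Your proposal is correct and follows essentially the same approach as the paper's own proof: reduce to the Poisson vector via Proposition~\ref{thm:CNBWquant} and Lemma~\ref{lem:dtv}, split each coordinate into the $j=k$ term plus the proper-divisor remainder, kill the remainder by a variance computation, and handle the dominant term by the Poisson CLT together with independence of the $\Cy{k}$. Your write-up is in fact slightly more explicit than the paper's about the constant $\sqrt{2k}$ and about why joint convergence of the $W_k$ follows from independence.
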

%%%%%%%%%%%% change 20110814 -- TOBY
%%%% Moved Lemma~\ref{lem:10ext} up a little bit
  \begin{proof}
    Let $\X[n]{k}=(2d-1)^{-k/2}
      (\CNBW{k}-\E[\CNBW{k}])$.
    We note that $\CNBW{k}$ depends on $d$ (and hence on $n$),
    although we have suppressed this dependence from the notation.
    By Proposition~\ref{thm:CNBWquant} and Lemma~\ref{lem:dtv},
    the total variation distance
    between the laws of $\big(\sCNBW[n]{k};\ 1\leq k\leq r\big)$
    and $\big(\X[n]{k};\ 1\leq k\leq r\big)$ converges to zero
    as $n\to\infty$.  Hence
    it suffices to show that $\big(\X[n]{k};\ 1\leq k\leq r\big)\toL
    (Z_1,\ldots,Z_r)$ as $n\to\infty$. %%% 20110902 Toby changed
                       % notation (and fixed a typo)
    
    Let $\lambda_k=a(d,k)/2k$ as in Theorem~\ref{thm:multipoiquant}.
    We can write $\X[n]{k}$ as
      \begin{align*}
        \X[n]{k}&= 2k(2d-1)^{-k/2}\big(\Cy{k}- \lambda_k\big)+
          (2d-1)^{-k/2}\sum_{\substack{j|k\\j<k}}\big(2j\Cy{j} 
          -a(d,j)\big).
      \end{align*}
      Using \eqref{eq:adkbounds}, it is 
      a straightforward calculation
      to show that as $n\to\infty$,
      \begin{align*}
        \left(2k(2d-1)^{-k/2}\big(\Cy{k}- \lambda_k\big);\ 1\leq k\leq r\right)
        \toL (Z_1,\ldots,Z_r).  %%%%%%%% 20110902 Toby changed notation
      \end{align*}
%%%%%%%%%%%% change 20110830 -- TOBY
%%%%%%% inserted ``for all $k\leq r$''
      Hence we need only show that for all $k\leq r$,
      \begin{align*}
        (2d-1)^{-k/2}\sum_{\substack{j|k\\j<k}}\big(2j\Cy[n]{j} 
           - a(d,j)\big)
           \toPr 0.
      \end{align*}
      We calculate
      \begin{align*}
        \var\bigg[(2d-1)^{-k/2}\sum_{\substack{j|k\\j<k}}\big(2j\Cy[n]{j} 
           - a(d,j)\big)\bigg] &= (2d-1)^{-k}\sum_{\substack{j|k\\j<k}}ja(d,j),
      \end{align*}
      and the statement follows by \eqref{eq:adkbounds} and Chebyshev's inequality.
      \end{proof}
The remaining results in this section refer to the weak convergence
set-up in Section~\ref{sec:weakconvergence}.

%Theorem and proof modified by Soumik 20110824
%%%%%%%%%%%% change 20110830 -- TOBY
%%%%%%%%%%%%%%%%%%%% theorem and proof modified to remove unnecessary condition
                 
\begin{thm}\label{thm:dfixedtight}
%%%20120516 Toby added the condition r_n\to\infty  
  Suppose that $d$ is fixed, that $r_n\to\infty$, and that
  \begin{align}
    (2d-1)^{2r_n} &= o(n)\label{eq:2d}.
  \end{align}
  Let 
  \[
  \Theta_k=\E\big[\CNBW{k}\big]^2=\sum_{j|k} 2j a(d,j) + \left( \sum_{j|k} a(d,j)  \right)^2.
  \]
  Let $(b_k)_{k\in\NN}$ be any fixed positive summable sequence.
  Define the weights of Section~\ref{sec:weakconvergence} by setting 
  \[
  \omega_k=b_k/\Theta_k, \quad k \in \NN.
  \]
  Let $P_n$ be the law of the sequence $(\CNBW[n]{1},\ldots,\CNBW[n]{r_n},0,0,
  \ldots)$.  Then $\{P_n\}$, considered as a sequence in $\PP(X)$, converges
  weakly
  to the law of the random vector $\big(\CNBW{k};\, k\in\NN\big)$.
\end{thm}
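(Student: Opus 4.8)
The plan is to avoid checking tightness and convergence of finite-dimensional distributions as separate steps, and instead to sandwich $P_n$ between two nearby laws: the \emph{truncated limiting law} $\mu_n$, defined as the law of $Y^{(n)}:=(\CNBW{1},\dots,\CNBW{r_n},0,0,\dots)$, and the target law $\nu$, defined as the law of $Y:=(\CNBW{k};\,k\in\NN)$. One shows $\rho(P_n,\mu_n)\to 0$ using the total variation estimate of Proposition~\ref{thm:CNBWquant}, and $\rho(\mu_n,\nu)\to 0$ by letting the truncation level $r_n$ grow; the triangle inequality then gives $P_n\Rightarrow\nu$.

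First I would verify that $\nu$ is a genuine element of $\PP(X)$. From $\CNBW{k}=\sum_{j\mid k}2j\Cy{j}$ with the $\Cy{j}$ independent and Poisson of mean $a(d,j)/2j$, a one-line mean--variance computation gives $\E\big[(\CNBW{k})^2\big]=\sum_{j\mid k}2ja(d,j)+\big(\sum_{j\mid k}a(d,j)\big)^2=\Theta_k$. Hence $\E\sum_{k}\omega_k(\CNBW{k})^2=\sum_k b_k<\infty$, so $Y\in\ltwo(\weight)$ almost surely; this is precisely the point of the normalization $\omega_k=b_k/\Theta_k$. In particular $\sum_k\omega_k(\CNBW{k})^2<\infty$ a.s., so on a probability space carrying all the $\Cy{j}$ the tail sums $\norm{Y^{(n)}-Y}^2=\sum_{k>r_n}\omega_k(\CNBW{k})^2$ vanish a.s.\ as $r_n\to\infty$. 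Thus $Y^{(n)}\to Y$ a.s.\ in $X$, whence $\mu_n\Rightarrow\nu$.

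Next I would bound $\dtv(P_n,\mu_n)$. Both $P_n$ and $\mu_n$ are pushforwards, under the continuous zero-padding map $\ZZ_+^{r_n}\to\ltwo(\weight)$, of the laws of $(\CNBW[n]{k};\,1\le k\le r_n)$ and of $(\CNBW{k};\,1\le k\le r_n)$ respectively. By Lemma~\ref{lem:dtv} and Proposition~\ref{thm:CNBWquant}, $\dtv(P_n,\mu_n)\le C_6(2d-1)^{2r_n}/n$, which tends to $0$ by hypothesis~\eqref{eq:2d}. Since the Prokhorov metric is dominated by the total variation distance, $\rho(P_n,\mu_n)\to 0$, and combining with $\rho(\mu_n,\nu)\to 0$ from the previous step gives $\rho(P_n,\nu)\to 0$, i.e.\ $P_n$ converges weakly to $\nu$. (One could equivalently package this via Lemma~\ref{lem:fd-characterize}: Proposition~\ref{thm:CNBWquant} gives convergence of all finite-dimensional distributions to those of $\nu$, while the two steps above give tightness of $\{P_n\}$, so every subsequential weak limit equals $\nu$.)

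There is no serious obstacle here: all the real work lives in Proposition~\ref{thm:CNBWquant}, whose bound crucially holds over the entire growing range $1\le k\le r_n$ and not merely a fixed window. The only points needing care are bookkeeping ones---arranging that \eqref{eq:2d} kills $\dtv(P_n,\mu_n)$ at the chosen truncation level $r_n$, and that the untruncated limit $Y$ still lies in $X$---both of which are handled automatically by the weight choice $\omega_k=b_k/\Theta_k$. A more pedestrian route through a uniform bound $\sup_n\E\norm{X_n}^2<\infty$ would require estimating $\E[(\CNBW[n]{k})^2]$ uniformly in $n$, which the total variation comparison sidesteps entirely.
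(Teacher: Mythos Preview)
Your argument is correct, and it takes a genuinely different route from the paper's own proof. The paper establishes weak convergence in the classical two-step fashion: finite-dimensional convergence (via Corollary~\ref{cor:dfixed}) plus tightness, where tightness is obtained by exhibiting an explicit infinite cube $\{\,|x_k|\le a_k\,\}$ with $a_k=(\alpha+2)\E[\CNBW{k}]$. To show that $(\CNBW[n]{1},\dots,\CNBW[n]{r_n})$ lands in this cube with high probability, the paper first transfers the problem to the limiting variables via the total variation bound of Proposition~\ref{thm:CNBWquant}, and then carries out a Poisson large deviation estimate (an exponential-moment computation) to control $\P(\CNBW{k}>a_k)$ summably in $k$.

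Your approach instead uses Proposition~\ref{thm:CNBWquant} once, at the level of the full $r_n$-dimensional joint law, to bound $\dtv(P_n,\mu_n)$ directly; the remaining step $\mu_n\Rightarrow\nu$ is just the almost sure vanishing of a convergent tail series. This sidesteps the large deviation estimate entirely and is shorter. What the paper's route buys in exchange is an explicit tight compact set with concrete coordinates $a_k$, and a cleaner separation of the ``finite-dimensional'' and ``tightness'' ingredients in keeping with the framework of Section~\ref{sec:weakconvergence}; this separation becomes more relevant in the growing-$d$ analogue (Theorem~\ref{thm:dgrowstight}), where the centered variables are no longer related to the limit by a small total variation distance and one genuinely needs a tightness argument.
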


\begin{proof} We first claim that the random vector $\left( \CNBW{k};\, k \in \NN  \right)$ almost surely lies in $\ltwo(\weight)$. This follows by a deliberate choice of $\weight$:  
\[ %%change 20110830 -- TOBY changed E to \E
\E \sum_{k=1}^\infty \left(\CNBW{k}\right)^2 \omega_k = \sum_{k=1}^\infty \Theta_k \omega_k = \sum_{k=1}^\infty b_k < \infty,
\]
which proves finiteness almost surely. The computation of $\Theta_k$ is straightforward.

  By Corollary~\ref{cor:dfixed}, we know that all subsequential
  weak limits of $P_n$ have the same finite-dimensional distributions
  as $\big(\CNBW{k}; k\in\NN\big)$,
  and by Lemma~\ref{lem:fd-characterize}, they are in fact identical
  to the law of $\big(\CNBW{k}; k\in\NN\big)$.
  Thus it suffices to show that $\{P_1,P_2,\ldots\}$
  is tight. To do this we will apply Lemma \ref{lemma:hilbertcube} by choosing a suitable infinite cube.

  In other words, we must show that given any $\epsilon >0$, there exists an element $\underline{a}=(a_m)_{m\in \NN} \in \ltwo(\weight)$ such that
    \begin{align}
    \sup_n\P\left[\cup_{k=1}^{r_n}
      \left\{ \CNBW[n]{k}> a_k  \right\} \right]<\eps.\label{eq:tightness}
  \end{align}
  In fact, our choice of $\underline{a}$ is 
  \[
  a_k = (\alpha +2) \E\left( \CNBW[\infty]{k}\right)= (\alpha + 2) \sum_{j|k} a(d,j),
  \]
  for some positive $\alpha$ determined by $\epsilon$. Note that, by an obvious calculation, $\underline{a}\in \ltwo(\weight)$. 
  
%%%%%%%%%%%% change 20110830 -- TOBY
  %%%%%%%%% simplified next part by applying Theorem~\ref{thm:CNBWquant}
  By Proposition~\ref{thm:CNBWquant},  for any $\eta>0$,
  \begin{align}
    \P\left[\cup_{k=1}^{r_n} \left\{ \CNBW[n]{k}> a_k  \right\} \right]\le \P\left[
      \cup_{k=1}^{r_n}\left\{ \CNBW[\infty]{k}> a_k  \right\}  \right]+\eta\label{eq:tbound}
  \end{align}
  for all sufficiently large $n$.
   Now, we apply the union bound 
  \eq\label{unionbnd}
\sup_n  \P\left[\cup_{k=1}^{r_n} \left\{ \CNBW[\infty]{k}> a_k  \right\} \right] \le \sum_{k=1}^\infty \P\left[  \CNBW[\infty]{k}  > a_k\right]
  \en
%%%%%%%%0908 Ioana: "bound" instead of "estimate"
and bound the right side by a simple large deviation estimate.
 
We start with the decomposition
\eq\label{cnbwdecomp}
\CNBW[\infty]{k} = \sum_{j|k} 2j \Cy{j}, 
\en
where $\{\Cy{j}\}$ are independent Poisson random variables with mean $a(d,j)/2j$.
Thus, for any $\lambda >0$, the exponential moments are easy to derive: 
\[
\begin{split}
\E\left(  e^{\lambda \CNBW[\infty]{k}}  \right) &= \prod_{j|k} E\left( e^{\lambda 2j \Cy{j} }  \right)= \prod_{j|k} \exp\left\{ \frac{a(d,j)}{2j}\left( e^{2\lambda j} - 1 \right) \right\}\\
&= \exp\left[  \sum_{j|k} a(d,j)\frac{e^{2\lambda j }-1}{2j}   \right].
\end{split}
\] 
Hence, by Markov's inequality, we get
\[
\begin{split}
\P\left(  \CNBW[\infty]{k} > a_k  \right) &\le e^{-\lambda a_k} \E \left(  e^{\lambda \CNBW[\infty]{k}}  \right)\\
&\le \exp\left[  \sum_{j|k} a(d,j) \left( \frac{e^{2\lambda j } -1 }{2j} - (\alpha+2)\lambda \right) \right].
\end{split}
\] 
An easy analysis shows that if $\lambda= \log 2/ (2k)$, one must have
\[
\frac{e^{2\lambda j}-1}{2j} < 2\lambda, \quad \text{for all $j\le k$}. 
\] 
Hence,
\[
\P\left(  \CNBW[\infty]{k} > a_k  \right) \le \exp\left[  -\frac{\alpha \log 2}{2k} \sum_{j|k} a(d,j)  \right] \le 2^{  -\alpha (2d-1)^{k}/2k}. 
\] 
 
The above expression is clearly summable in $k$, and thus from \eqref{unionbnd} we get 
 \[
 \sup_n  \P\left[\cup_{k=1}^{r_n} \left\{ \CNBW[\infty]{k}> a_k  \right\} \right] \le \sum_{k=1}^\infty 2^{  -\alpha (2d-1)^{k}/2k}.
 \]
The right side can be made as small as we want by choosing a large enough $\alpha$. This is enough to establish \eqref{eq:tightness}.
\end{proof}

We now prove a corresponding theorem when $d$ is growing with $n$. 
Let $\mu_k(d)$ denote $\E\big[\CNBW{k}\big]$ emphasizing its dependence on $d$. We define
\begin{align}\label{eq:whatisntilde}
  \Nc[n]{k} &= (2d-1)^{-k/2}\big(\CNBW[n]{k}-\mu_k(d)\big).
\end{align}

\begin{thm}\label{thm:dgrowstight}
%%%20120516 Toby added condition that r_n\to\infty
  Suppose that $d=d(n)\to\infty$ and $r_n\to\infty$ as $n\to\infty$.
  Suppose that
  \begin{align*}
    (2d-1)^{2r_n} &= o(n).
  \end{align*}
  We define the weights $\weight$ by setting $\omega_k=b_k/(k^2\log k)$, where
  $(b_k)_{k\in\NN}$ is any fixed positive summable sequence.
  Let $P_n$ be the law of the sequence $(\Nc[n]{1},\ldots,\Nc[n]{r_n},0,0,
  \ldots)$.
   %%%%20110917 Toby uncapitalized normal
  Let $Z_1,Z_2,\ldots$ be independent normal 
  random variables with $\E Z_k=0$ and $\E Z_k^2=2k$.  
  Then $P_n$, considered as an element of $\PP(X)$, converges
  weakly to the law of the random vector $(Z_k; k\in\NN)$.
  \end{thm}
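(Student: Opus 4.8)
The plan is to follow the proof of Theorem~\ref{thm:dfixedtight} almost verbatim, with the normal limit replacing the Poisson-type one. First I would record that the target law — that of $(Z_k)_{k\in\NN}$ — lives on $\ltwo(\weight)$ almost surely: since $\E[Z_k^2]\,\omega_k = 2k\cdot b_k/(k^2\log k) = 2b_k/(k\log k)$ is summable, $\E\sum_k Z_k^2\omega_k<\infty$. Next, Proposition~\ref{thm:dgrows} gives that the finite-dimensional distributions of $P_n$ converge to those of $(Z_k)_{k\in\NN}$, so by Lemma~\ref{lem:fd-characterize} any subsequential weak limit of $\{P_n\}$ must coincide with the law of $(Z_k)_{k\in\NN}$. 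It therefore suffices to prove that $\{P_n\}$ is tight; and since a finite family of probability measures is always tight, it is enough to control the tightness functional for all sufficiently large $n$ — in particular we may assume $d=d(n)$ exceeds any prescribed threshold.

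By Lemma~\ref{lemma:hilbertcube}, tightness follows once, for each $\eps>0$, we exhibit a nonnegative sequence $\underline a=(a_k)\in\ltwo(\weight)$ with $\sup_n\P\bigl[\bigcup_{k=1}^{r_n}\{\,|\Nc[n]{k}|>a_k\,\}\bigr]<\eps$. My choice is $a_k=\alpha\sqrt{k\log k}$ for a constant $\alpha=\alpha(\eps)$ to be fixed later; then $\sum_k a_k^2\omega_k=\alpha^2\sum_k b_k/k<\infty$, so $\underline a\in\ltwo(\weight)$. Writing $\X[n]{k}=(2d-1)^{-k/2}\bigl(\CNBW[\infty]{k}-\mu_k(d)\bigr)$ as in the proof of Proposition~\ref{thm:dgrows}, the deterministic map $(x_k)\mapsto\bigl((2d-1)^{-k/2}(x_k-\mu_k(d))\bigr)$ together with Lemma~\ref{lem:dtv} and Proposition~\ref{thm:CNBWquant} shows that the law of $(\Nc[n]{k})_{1\le k\le r_n}$ is within total variation $C_6(2d-1)^{2r_n}/n=o(1)$ of the law of $(\X[n]{k})_{1\le k\le r_n}$. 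Hence for $n$ large the probability above differs by at most a prescribed $\eta$ from $\P\bigl[\bigcup_{k\le r_n}\{|\X[n]{k}|>a_k\}\bigr]$, which by a union bound is at most $\sum_{k\ge1}\P[\,|\X[n]{k}|>a_k\,]$.

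It remains to bound this series uniformly in $n$ by a quantity that tends to $0$ as $\alpha\to\infty$. Here I would run the exponential Markov (Chernoff) argument exactly as in the proof of Theorem~\ref{thm:dfixedtight}, but applied to the \emph{centered} variable $\CNBW[\infty]{k}-\mu_k(d)=\sum_{j\mid k}\bigl(2j\Cy{j}-a(d,j)\bigr)$: since the $\Cy{j}$ are independent Poissons with mean $a(d,j)/2j$, for $\theta>0$
\[
  \E\,e^{\theta(2d-1)^{-k/2}(\CNBW[\infty]{k}-\mu_k(d))}
   =\exp\Bigl(\,\sum_{j\mid k}\tfrac{a(d,j)}{2j}\bigl(e^{2j\theta(2d-1)^{-k/2}}-1-2j\theta(2d-1)^{-k/2}\bigr)\Bigr).
\]
Restricting $\theta$ so that $2k\theta(2d-1)^{-k/2}$ stays below a fixed constant, the inequality $e^x-1-x\le x^2$ (valid for $x$ in that range) together with the elementary estimate $\sum_{j\mid k}j\,a(d,j)\le Ck(2d-1)^k$ (immediate from \eqref{eq:adkbounds}) bounds the exponent by $Ck\theta^2$. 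Optimizing over the admissible range of $\theta$ gives $\P[\X[n]{k}>a_k]\le\exp(-c\,a_k^2/k)=k^{-c\alpha^2}$ whenever $a_k\lesssim(2d-1)^{k/2}$, and an even smaller bound of the form $\exp\bigl(-c\alpha(2d-1)^{k/2}\sqrt{\log k}/\sqrt k\bigr)$ in the complementary range; the lower tail $\P[\X[n]{k}<-a_k]$ is handled identically. In either case $\sum_{k\ge1}\P[\,|\X[n]{k}|>a_k\,]\le C\sum_{k\ge2}k^{-c\alpha^2}$, which is finite and tends to $0$ as $\alpha\to\infty$, uniformly in $n$ once $d$ is large; choosing $\alpha$ (hence $\eps$) appropriately finishes the proof.

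The routine parts — the finite-dimensional convergence, the $\ltwo(\weight)$ membership of the limit, and the reduction to the limit process $\CNBW[\infty]{k}$ — are immediate from the cited results. I expect the main obstacle to be the bookkeeping in the Chernoff step: keeping all constants uniform over $k$ and over the relevant range of $d$ (using that only large $n$, hence large $d$, matters for tightness), cleanly separating the regime $a_k\lesssim(2d-1)^{k/2}$ from its complement, and verifying the divisor-sum estimate $\sum_{j\mid k}j\,a(d,j)\le Ck(2d-1)^k$.
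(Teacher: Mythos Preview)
Your proposal is correct and follows the same architecture as the paper's proof: show the limit lives in $\ltwo(\weight)$, invoke Proposition~\ref{thm:dgrows} and Lemma~\ref{lem:fd-characterize} for finite-dimensional identification, and reduce to tightness via an infinite cube after transferring to the limiting Poisson process by Proposition~\ref{thm:CNBWquant}.

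Where you diverge is in the concentration step and, consequently, in the choice of cube. The paper takes $a_k=\alpha k\sqrt{\log k}$ and proves the tail bound via a modified logarithmic Sobolev inequality for product Poisson measures (Lemma~\ref{lemma:modlogsobo}, drawn from Ledoux), obtaining a Bennett-type inequality that yields $\P\bigl(|\CNBW{k}-\mu_k(d)|>a_k(2d-1)^{k/2}\bigr)\le 2k^{-\alpha^2/32}$. The log-Sobolev route packages the sub-Gaussian and sub-Poisson regimes into a single inequality, but its variance proxy $\alpha^2=4k^3$ overcounts by a factor of $k$, forcing the larger $a_k$. Your direct Chernoff computation exploits the exact mgf of a Poisson sum and the sharper divisor estimate $\sum_{j\mid k}j\,a(d,j)\le Ck(2d-1)^k$, giving the genuinely Gaussian exponent $-c\,a_k^2/k$ and hence allowing the tighter cube $a_k=\alpha\sqrt{k\log k}$. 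The price you pay is the explicit case split on whether the optimal $\theta$ respects the constraint $2k\theta(2d-1)^{-k/2}\le 1$; as you note, the complementary range is finite and in fact empty once $d$ is large enough (depending only on $\alpha$), so tightness for all large $n$ follows. Your argument is more elementary and slightly sharper; the paper's is more uniform in its treatment of the two tail regimes but leans on heavier machinery.
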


To proceed with the proof we will need a lemma on measure concentration. We will use a result on modified logarithmic Sobolev inequality that can be found in the Berlin notes by Ledoux \cite{Led}. For the convenience of the reader we reproduce (a slight modification of) the statement of Theorem 5.5 in \cite[page 71]{Led} for a joint product measure. Please note that although the statement of Theorem 5.5 is written for an iid product measure, its proof goes through even when the coordinate laws are different (but independent). In fact, the crucial step is the tensorization of entropy (\cite[Proposition 2.2]{Led}), which is generally true. 

\begin{lemma}\label{lemma:modlogsobo}
For $n\in \mathbb{N}$, let $\mu_1, \mu_2, \ldots, \mu_n$ be $n$ probability measures on $\NN$. For functions $f$ on $\NN$, define $Df(x)= f(x+1) - f(x)$ to be the discrete derivative. Define the entropy of $f$ under $\mu_i$ by
  \[
  \Ent_{\mu_i}(f)= \E_{\mu_i}\left( f \log f \right) - \E_{\mu_i} (f) \log \E_{\mu_i}\left( f \right). 
  \]
Assume that there exist two positive constants $c$ and $d$ such that for every $f$ on $\NN$ such that $\sup_x\abs{Df} \le \lambda$, one has
\[
 \Ent_{\mu_i}\left( e^f\right) \le c e^{d\lambda} \E_{\mu_i}\left( \abs{D f}^2 e^f \right), \quad \text{as functions of $\lambda$}.
\]

Let $\mu$ denote the product measure of the $\mu_i$'s. Let $F$ be a function on $\NN^n$ such that for every $x\in \NN^n$, 
\[
\sum_{i=1}^n \abs{F(x+e_i) - F(x)}^2 \le \alpha^2,\quad \text{and}\quad \max_{1\le i \le n} \abs{F(x+e_i) - F(x)} \le \beta.
\]

Then $\E_\mu(\abs{F}) < \infty$ and, for every $r\ge 0$, 
\[
\mu\left( F \ge \E_\mu(F) + r \right) \le \exp\left( -\frac{r}{2d\beta} \log\left(1 + \frac{\beta dr}{4 c\alpha^2} \right)  \right).
\] 
\end{lemma}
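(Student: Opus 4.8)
The plan is to run the classical Herbst (entropy/Laplace-transform) argument in the form given by Ledoux \cite{Led}, the only modification being that the tensorization of entropy used there does not in fact require the coordinate laws to be identical, only independent (see \cite[Proposition~2.2]{Led}). Concretely, I would fix $s>0$, set $H(s)=\E_\mu\bigl[e^{sF}\bigr]$, and aim for the Laplace transform bound
\[
\E_\mu\bigl[e^{s(F-\E_\mu F)}\bigr]\le \exp\!\Bigl(\tfrac{c\alpha^2 s}{d\beta}\bigl(e^{d\beta s}-1\bigr)\Bigr),\qquad s>0.
\]
Once this is in hand, the stated deviation inequality will follow from Markov's inequality together with a judicious choice of $s$.

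To prove the Laplace bound I would first tensorize the entropy, writing $\Ent_\mu(e^{sF})\le\sum_{i=1}^n\E_\mu\bigl[\Ent_{\mu_i}(e^{sF})\bigr]$, where $\Ent_{\mu_i}$ acts in the $i$-th variable with the others frozen. For frozen values of the other coordinates the function $x_i\mapsto sF(x)$ has discrete derivative $s\bigl(F(x+e_i)-F(x)\bigr)$, whose absolute value is at most $s\beta$; applying the hypothesis on $\mu_i$ with $\lambda=s\beta$ gives $\Ent_{\mu_i}(e^{sF})\le c\,e^{d\beta s}s^2\,\E_{\mu_i}\bigl[\abs{F(x+e_i)-F(x)}^2 e^{sF}\bigr]$. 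Summing over $i$ and using $\sum_i\abs{F(x+e_i)-F(x)}^2\le\alpha^2$ yields the differential inequality $s H'(s)-H(s)\log H(s)=\Ent_\mu(e^{sF})\le c\alpha^2 s^2 e^{d\beta s}H(s)$. Dividing by $s^2 H(s)$, the left-hand side is exactly the derivative of $K(s):=s^{-1}\log H(s)$, so $K'(s)\le c\alpha^2 e^{d\beta s}$; since $K(0^+)=\E_\mu F$, integrating from $0$ to $s$ gives $\log H(s)\le s\E_\mu F+\tfrac{c\alpha^2 s}{d\beta}(e^{d\beta s}-1)$, which is the desired bound.

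For the last step, Markov's inequality gives for every $s>0$ that $\mu\bigl(F\ge\E_\mu F+r\bigr)\le\exp\!\bigl(-sr+\tfrac{c\alpha^2 s}{d\beta}(e^{d\beta s}-1)\bigr)$. Taking $s=\tfrac{1}{d\beta}\log\!\bigl(1+\tfrac{\beta d r}{4c\alpha^2}\bigr)$ makes $\tfrac{c\alpha^2}{d\beta}(e^{d\beta s}-1)=r/4$, so the exponent equals $-\tfrac34 sr\le-\tfrac12 sr$, which is precisely the claimed bound $-\tfrac{r}{2d\beta}\log\!\bigl(1+\tfrac{\beta d r}{4c\alpha^2}\bigr)$. (One does not need to optimize $s$ exactly; this explicit choice already suffices and keeps the constants clean.)

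I expect the only genuinely delicate points to be the a priori integrability statements: that $\E_\mu[e^{sF}]<\infty$ for all $s$, hence $\E_\mu\abs{F}<\infty$, and the justification that $H$ is differentiable with $K(s)\to\E_\mu F$ as $s\downarrow 0$. These are handled exactly as in \cite{Led} (first for bounded $F$, then by truncation and monotone convergence), and are where most of the technical care goes, while the core entropy estimate above is short. The non-identical coordinate measures create no obstacle, since tensorization of entropy is insensitive to this; everything else is the bookkeeping indicated above.
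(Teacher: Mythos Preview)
Your proposal is correct and follows exactly the approach the paper has in mind: the paper does not actually prove this lemma but simply cites Theorem~5.5 of Ledoux \cite{Led}, observing that its proof (the Herbst argument you outline) goes through verbatim for non-identical independent coordinate laws because the tensorization of entropy \cite[Proposition~2.2]{Led} does not require the factors to be i.i.d. Your sketch fills in precisely those details, including the differential inequality for $K(s)=s^{-1}\log H(s)$ and the explicit choice of $s$ in the Chernoff step, so there is nothing to add.
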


\begin{proof}[Proof of Theorem \ref{thm:dgrowstight}] The proof is similar in spirit to the proof of Theorem \ref{thm:dfixedtight}. 
 As in that proof, the limiting measure is supported on $\ltwo(\weight)$.
  By Proposition~\ref{thm:dgrows} and Lemma~\ref{lem:fd-characterize},
  we need only show that the family $\{P_1,P_2,\ldots\}$ is tight.
  As in Theorem~\ref{thm:dfixedtight}, we need to choose a suitable infinite cube. 
  
  Choose $\epsilon >0$. Define
  \begin{align*}
  a_k   &= \alpha k \sqrt{\log k},
  \end{align*}
  for some positive $\alpha > 1$ depending on $\epsilon$. Then $\underline{a} \in \ltwo(\weight)$.

  We need to show that, for a suitable choice of $\alpha$, 
  \begin{align*}
    \sup_n\P\left[\cup_{k=1}^{r_n}\left\{ \abs{\Nc[n]{k}} > a_k  \right\}\right]<\eps.
  \end{align*}
  
  By Lemma~\ref{lem:dtv} and Proposition~\ref{thm:CNBWquant},
  for any $\eta>0$,
  \begin{align}
    \P\left[\cup_{k=1}^{r_n}\left\{ \abs{\Nc[n]{k}} > a_k  \right\}\right]<\P\left[\cup_{k=1}^{r_n}\left\{ \abs{\CNBW[\infty]{k} - \mu_k(d)} > a_k (2d-1)^{k/2}  \right\}\right]+\eta\label{eq:tbounddgrows}
  \end{align}
  for all sufficiently large $n$. 

  Note as before that $\CNBW{k}$ depends on $d$ 
  (and hence on $n$).
  
  Proceeding as before, we need to estimate
  \begin{align*}
 \P\left(  \abs{\CNBW[\infty]{k} - \mu_k(d)} > a_k (2d-1)^{k/2}  \right)
   \end{align*}
  for our choice of $a_k$. 
    
  Let $\Poi(\theta)$ denote as before the Poisson law with mean $\theta$. We will denote expectation with respect to $\Poi(\theta)$ by $\E_{\pi_{\theta}}$. As shown in Corollary 5.3 in \cite[page 69]{Led}, $\Poi(\theta)$ satisfies the following modified logarithmic Sobolev inequality: for any $f$ on $\mathbb{N}$ with strictly positive values
  \eq\label{modlogsobo}
  \Ent_{\pi_{\theta}}(f) \le \theta \E_{\pi_\theta} \left( \frac{1}{f} \abs{Df}^2  \right).  
  \en
  Here $\Ent_{\pi_\theta}(f)$ refers to the entropy of $f$ under $\Poi(\theta)$.
    
  Let now $f$ on $\NN$ satisfy $\sup_{x} \abs{Df(x)} \le \lambda$. By eqn. (5.16) in \cite[page 70]{Led}, \eqref{modlogsobo} implies that $\Poi(\theta)$ satisfies the inequality
  \eq\label{modlogsobo2}
  \Ent_{\pi_\theta}\left( e^f\right) \le C e^{2\lambda} \E_{\pi_\theta}\left( \abs{Df}^2 e^f \right), \quad \text{  for any $C\ge \theta$.}
  \en

 % \marginpar{I: cite Thm 5.5??}
  Now fix some $k\in \NN$ and consider the product measure of the random vector $(\Cy{j},\; j|k  )$. 
  Each coordinate satisfies inequality \eqref{modlogsobo2} and one can take the common constant $C$ to be $a(d,k)/2k$. 
    
  We apply Lemma \ref{lemma:modlogsobo} on the function $F(\underline{x}) = \sum_{j|k} 2j x_j$. It is straightforward to see that one can take $\alpha^2=  4k^3$, $\beta=2k$. Thus, we get the following tail estimate for any $r >0$:
  \[
  \P\left(  F > \E(F) + r \right) \le \exp\left( -\frac{r}{8k}\log\left( 1 + \frac{4k r}{4 C 4k^3 } \right)  \right).
  \]
  Replacing $F$ by $-F$ we obtain a two-sided bound
  \[
  \P\left( \abs{F - \E(F)} > r  \right) \le 2\exp\left( -\frac{r}{8k}\log\left( 1 + \frac{4k r}{16 C k^3 } \right)  \right).
  \]
 
 Hence we have shown that for any $r >0$, the following estimate holds
 \[
 \begin{split}
 \P\left(  \abs{\CNBW[\infty]{k} - \mu_k(d)} > r  \right)&\le 2\exp\left( -\frac{r}{8k}\log\left( 1 + \frac{8k^2 r}{16 a(d,k) k^3 } \right)  \right)\\
 &= 2\exp\left( -\frac{r}{8k}\log\left( 1 + \frac{r}{ 2a(d,k) k} \right)  \right).
 \end{split}
 \]
 
 Recall from \eqref{eq:adkbounds} that $a(d,k)\sim (2d-1)^k$. Therefore
 \[
 \P\left(  \abs{\CNBW[\infty]{k} - \mu_k(d)} > a_k (2d-1)^{k/2} \right) \le 2\exp\left( -\frac{a_k (2d-1)^{k/2}}{8k}\log\left( 1 + \frac{a_k}{ 2(2d-1)^{k/2} k} \right)  \right).
 \]
 
 Now, $\log(1+x) \ge x/2$ for all $0\le x\le 1$. Using this simple bound we get that for all $(k,d)$ such that $\alpha \log k \le 2(2d-1)^{k}$, we have
\[
 \P\left(  \abs{\CNBW[\infty]{k} - \mu_k(d)} > a_k (2d-1)^{k/2} \right) \le 2 \exp\left( -\frac{a_k^2}{32 k^2} \right)\le 2\exp\left( -\frac{\alpha^2 k^2 \log k}{32 k^2}\right)=2 k^{-\alpha^2/32}. 
\]
The right side is summable whenever $\alpha^2 > 32$. The rest of the proof follows just as in Theorem~\ref{thm:dfixedtight}.
\end{proof}

\section{Spectral concentration}\label{sec:eval2} 

%%%% Ioana messed with this section. >:-)
The problem of estimating the spectral gap of a $d$-regular graph has
been approached primarily in two ways, the method of moments and the
counting method of Kahn and Szemer\'edi, prezented in \cite{FKSz}.  The
method of moments has been developed in the work of Broder and Shamir
\cite{broder_shamir} and very extensively by Friedman \cite{friedmane2}, \cite{friedmanalon}.  In his
work, Friedman, relying on $d$ being fixed independently of
$n$, developed extremely fine control over the magnitude of the second
eigenvalue.  On the other hand in \cite{FKSz}, Kahn and Szemer\'edi
only show that the second largest eigenvalue has magnitude
$O(\sqrt{d}).$  While weaker than Friedman's bound, their techniques
readily extend to the case where $d$ is allowed to grow as a function
of $n$; this observation has been informally made by others, and communicated to
us by Vu and Friedman.  Here we will formalize it,  and present the
Kahn-Szemer\'edi argument in the context of growing $d$ to demonstrate
the method's validity, as well as to develop some handle on the
constants in the bound.

%%20120427 Elliot -- Off by a factor of 4!
Specifically, we will prove \begin{thm}
\label{thm:eigbound}
For any $m > 0,$ there is a constant $C=C(m)$ and universal constants $K$ and $c$ so that
\[
\prob \left[
\exists i \neq 1 ~:~ |\lambda_i| \geq C \sqrt{d} 
\right] \leq n^{-m} + K\exp(-cn).
\]
Further, the constant $C$ may be taken to be $36000 + 2400m.$
\end{thm}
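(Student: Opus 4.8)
The plan is to carry out the Kahn--Szemer\'edi counting argument of \cite{FKSz}, in the streamlined form used in \cite{LSV}, keeping every constant as an explicit function of $d$, $n$, and $m$. Since each permutation matrix is doubly stochastic, $A_n\one=2d\one$, so $\lambda_1=2d$ has eigenvector $\one$ and the quantity to control is
\[
\max_{i\neq 1}\abs{\lambda_i}=\sup\bigl\{\abs{\iprod{x,A_ny}}:\ \norm{x}=\norm{y}=1,\ x\perp\one,\ y\perp\one\bigr\}.
\]
The first step is a discretization. It suffices to bound $\iprod{x,A_ny}$ over a finite net $\mathcal{T}$ consisting of vectors in $\one^\perp$ whose coordinates are integer multiples of $n^{-1/2}$ with $\norm{x}\le 1$: a standard rounding argument (write a unit vector as a net vector plus a correction of the same discrete form and iterate) shows the supremum over $\mathcal{T}$ controls the supremum over the whole sphere up to a universal constant factor. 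The net has exponentially many elements in $n$, so every per-pair bound proved below must beat that exponential.

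Fix $x,y\in\mathcal{T}$ and split $\iprod{x,A_ny}=\sum_{i,j}(A_n)_{ij}x_iy_j$ into a \emph{light} part $L(x,y)$, the sum over couples $(i,j)$ with $\abs{x_iy_j}\le\sqrt d/n$, and a \emph{heavy} part $H(x,y)$, the sum over the rest. Since $A_n\one=2d\one$ the full bilinear form has mean $0$ on $\one^\perp$, and because $x,y$ are unit vectors there are few heavy couples, so $\expec H(x,y)=O(\sqrt d)$ and hence $\expec L(x,y)=O(\sqrt d)$ as well; it remains to control $\abs{H}$ on a deterministic good event and $\abs{L-\expec L}$ via concentration. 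For the heavy part I would first establish the bounded discrepancy property: with probability at least $1-n^{-m}-Ke^{-cn}$, every pair of sets $S,T\subseteq\{1,\dots,n\}$ obeys the usual two-regime estimate on the edge count $e(S,T)$ between them---either $e(S,T)\le c_1 d\abs S\abs T/n$ when $\abs S\abs T$ is large, or $e(S,T)\log\bigl(e(S,T)n/(d\abs S\abs T)\bigr)\le c_2\abs T\log(n/\abs T)$ when it is small. This is proved by writing $e(S,T)$ as a sum over the $d$ permutations of near-hypergeometric counts, applying exponential-moment (Chernoff-type) bounds, and taking a union bound; splitting according to the size of $\abs T$ is exactly what produces the two error terms, the regime with $\abs T$ of order $n$ contributing the $Ke^{-cn}$ piece after a union over $2^n$ sets, and the polynomially-many very small sets contributing the $n^{-m}$ piece once the discrepancy constant is taken proportional to $m$. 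On this event one bounds $H(x,y)$ by dyadically partitioning the coordinates of $x$ according to the size of $\abs{x_i}$ (and likewise for $y$), estimating the contribution of each block pair $(X_\sigma,Y_\tau)$ by $2^{-\sigma-\tau}e(X_\sigma,Y_\tau)$, inserting the discrepancy bound, and summing over $(\sigma,\tau)$; the logarithmic correction in the discrepancy estimate is precisely what makes this double sum converge to $O(\sqrt d)$ without an extra logarithmic factor.

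For the light part, revealing the permutations one value at a time produces a martingale for $L(x,y)-\expec L(x,y)$ whose increments are $O(\sqrt d/n)$---this is where the light threshold enters---and whose predictable quadratic variation is $O(d/n)$. Freedman's martingale inequality then gives a tail bound of the form $2\exp\bigl(-s^2/(c_3(d/n+s\sqrt d/n))\bigr)$; taking $s$ a sufficiently large multiple of $\sqrt d$ makes the exponent a large multiple of $n$, which beats the cardinality of $\mathcal{T}$ once that multiple (hence the final constant) is large enough. Using Azuma's inequality instead would lose a factor and fail to close, which is why Freedman's inequality is used here, following \cite{LSV}. Intersecting the discrepancy event with the light-concentration events over all pairs in $\mathcal{T}$, every $x,y\in\mathcal{T}$ satisfies $\iprod{x,A_ny}=L+H\le(C'+C''m)\sqrt d$; the rounding argument of the first step promotes this to all unit $x,y\perp\one$, yielding $\max_{i\neq1}\abs{\lambda_i}\le C\sqrt d$ with $C=36000+2400m$ and total failure probability $n^{-m}+Ke^{-cn}$. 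The main obstacle is the heavy part: forcing the dyadic sum over block pairs to close at $O(\sqrt d)$ requires using the log-corrected discrepancy estimate rather than the cruder linear one on the small blocks, and then every constant---in the discrepancy lemma, in the dyadic bound, and in the Freedman step---must be tracked honestly to arrive at the explicit value stated for $C$.
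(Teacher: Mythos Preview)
Your outline is essentially the paper's proof: the same Kahn--Szemer\'edi decomposition into light and heavy couples at threshold $\sqrt d/n$, the same discrepancy lemma plus dyadic blocking for the heavy part, and the same Freedman-martingale concentration for the light part.

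One misattribution is worth flagging. You say the discrepancy lemma itself produces \emph{both} error terms, with large sets $|T|\asymp n$ contributing $Ke^{-cn}$. That is not how the paper (or the argument) actually runs. In the discrepancy lemma, when $|A|$ or $|B|$ exceeds $n/e$ the first discrepancy case $e(A,B)\le e\,\mu(A,B)$ holds \emph{deterministically} from the degree constraint $e(A,B)\le d(|A|\vee|B|)$, so no probability is spent there; the union bound over all smaller $A,B$ is what costs $n^{-m}$, and that is the \emph{only} failure probability coming from the heavy side. The $K\exp(-cn)$ term arises on the light side: after Freedman gives a per-pair tail $\exp(-\Theta(n))$, one takes a union bound over the $e^{\Theta(n)}$ pairs in the net $\mathcal T\times\mathcal T$, and the light constant must be chosen large enough for the Freedman exponent to dominate the entropy of the net. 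Your own light-part paragraph says exactly this, so the error budget you need is already there---just reassign the $Ke^{-cn}$ to the light-couple union bound rather than to the discrepancy lemma.
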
  

%%%%%%%%%%%%%% TOBY change 20110830
%%%%%%%%%%%%%% moved some of this text earlier in the paper
In what follows, let $M$ be the adjacency matrix for the $2d$-regular graph $G_n$.
Recall that
  this matrix can be realized by sampling independently and uniformly $d$ permutation matrices $A_1, A_2, \ldots, A_d$ and defining
\[
M = A_1 + A_1^t + A_2 + A_2^t + \cdots + A_d + A_d^t.
\] 
The starting point is the variational characterization of the eigenvalues $\lambda_1 \geq \lambda_2 \geq \cdots \geq \lambda_n$ of $M$, which states that
\[
\max\{ \lambda_2, |\lambda_n| \} = \sup_{\substack{w \perp \one \\ \|w\|=1}} \left|w^t M w\right|.
\]
Additional flexibility is provided by replacing this symmetric version of the Rayleigh quotient by the asymmetric version, 
\[
\sup_{\substack{w,v \perp \one \\ \|v\|=\|w\|=1}} |v^t M w|.
\]

The random variables $v^tMw$, for fixed $w$ and $v$, are substantially more tractable than the supremum.  To be able to work with these random variables instead of the supremum, we will pass to a finite set of vectors which approximate the sphere $\mathcal{S} = \{w \perp \one~:~ \|w\|=1\}.$  More specifically, we will only consider those $w$ and $v$ lying on the subset of the lattice $\mathcal{T}$ defined as
\[
\mathcal{T} := \left\{ \frac{\delta z }{\sqrt{n}} ~:~ z \in \mathbb{Z}^n, \|z\|^2 \leq \frac{n}{\delta^2}, z \perp \one \right\},
\]
for a fixed $\delta > 0.$

Vectors from $\mathcal{T}$ approximate vectors from $\mathcal{S}$ in the sense that every $v \in (1-\delta)\mathcal{S}$ is a convex combination of points in $\mathcal{T}.$ (See Lemma 2.3 of~\cite{FeigeOfek}.)  
%%%% 20110830 TOBY fixed a typo here
Thus  
\[
\frac{1}{(1-\delta)^2} \sup_{\substack{w,v \perp \one \\ \|v\|=\|w\|=1}} \left|[1-\delta]v^t M [1-\delta]w\right| \leq \frac{1}{(1-\delta)^2} \sup_{x,y \in \mathcal{T}} \left|x^tMy\right|.
\]
Furthermore, by a volume argument, it is possible to bound the cardinality of $\mathcal{T}$ as 
\[
\left|\mathcal{T}\right| \left(\frac{\delta}{\sqrt{n}}\right)^n \leq \operatorname{Vol}\left[ x \in \R^n ~:~ \|x\| \leq 1+\tfrac{\delta}{2} \right] = \frac{(1+\tfrac{\delta}{2})^n \sqrt{\pi}^n }{\Gamma(\tfrac{n}{2} + 1)}.
\]
Employing Stirling's approximation, this shows
\[
\left|\mathcal{T}\right| \leq C \left[\frac{(1+\tfrac{\delta}{2})\sqrt{2e\pi}}{\delta} \right]^n.
\]
for some universal constant $C$.

The breakthrough of Kahn and Szemer\'edi was to realize that $x^t M y$
can be controlled by virtue of a split into two types of terms.  If $x^tMy$ is written as a sum
\[
x^t M y = \sum_{\substack{ (u,v) \\ |x_uy_v| < \tfrac{\sqrt{d}}{n}}} x_uM_{uv}y_v + \sum_{\substack{ (u,v) \\ |x_uy_v| \geq \tfrac{\sqrt{d}}{n}}} x_uM_{uv}y_v, 
\]
then the contribution of the first sum turns out to be very nearly its
mean because of the Lipschitz dependence of the sum on the edges of
the graph.   The contribution of the second sum turns out to never be too large for a very different reason: the number of edges between any two sets in the graph is on the same order as its mean.  Following Feige and Ofek, for a fixed pair of vectors $(x,y) \in \mathcal{T}^2,$ define the \emph{light couples} $\mathcal{L} = \mathcal{L}(x,y)$ to be all those ordered pairs $(u,v)$ so that $|x_uy_v| \leq \tfrac{\sqrt{d}}{n},$ and let the \emph{heavy couples} $\mathcal{H}$ be all those pairs that are not light.

\subsection{Controlling the contribution of the light couples.}
%% Ioana changed order of lemma + proof & wording
Part of the advantage of having selected only the light couples is
that their expected contribution is of the ``correct'' order, as the
lemma below shows. 
\begin{lemma}
\label{lem:lightcoupleexpect}
\[
\left| 
\expect \sum_{ (u,v) \in \mathcal{L}} x_uM_{uv}y_v
\right| \leq 2\sqrt{d}.
\]
\end{lemma}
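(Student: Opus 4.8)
The plan is to evaluate the expectation in closed form -- which is easy because only first moments of the permutation matrices are involved -- and then to reduce the resulting bound to an elementary inequality using that $x$ and $y$ are orthogonal to $\one$.

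First I would record that each $A_i$ is a uniformly random permutation matrix, so $\expect (A_i)_{uv} = \expect (A_i^t)_{uv} = 1/n$ for every ordered pair $(u,v)$, including $u=v$ (consistent with self-loops being counted twice). Summing over the $d$ permutations and their transposes gives $\expect M_{uv} = 2d/n$ for all $u,v$, so by linearity of expectation
\[
\expect \sum_{(u,v)\in\mathcal{L}} x_u M_{uv} y_v = \frac{2d}{n}\sum_{(u,v)\in\mathcal{L}} x_u y_v .
\]
It therefore suffices to prove $\bigl|\sum_{(u,v)\in\mathcal{L}} x_u y_v\bigr| \le n/\sqrt{d}$.

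The key observation is that $\mathcal{L}$ and $\mathcal{H}$ partition all $n^2$ ordered pairs $(u,v)$, so, using that $\sum_u x_u = \sum_v y_v = 0$ because $x,y\perp\one$,
\[
\sum_{(u,v)\in\mathcal{L}} x_u y_v = \Bigl(\sum_u x_u\Bigr)\Bigl(\sum_v y_v\Bigr) - \sum_{(u,v)\in\mathcal{H}} x_u y_v = - \sum_{(u,v)\in\mathcal{H}} x_u y_v .
\]
On a heavy couple one has $|x_u y_v| \ge \sqrt{d}/n$, hence $|x_u y_v| \le (n/\sqrt{d})\,(x_u y_v)^2$, and therefore
\[
\Bigl| \sum_{(u,v)\in\mathcal{H}} x_u y_v \Bigr| \le \frac{n}{\sqrt{d}} \sum_{u,v} x_u^2 y_v^2 = \frac{n}{\sqrt{d}}\, \|x\|^2 \|y\|^2 \le \frac{n}{\sqrt{d}},
\]
where the last step uses that every element of $\mathcal{T}$ has norm at most $1$ (immediate from $\|z\|^2 \le n/\delta^2$). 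Combining the three displays yields the claimed bound $2\sqrt{d}$.

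There is no genuine obstacle here; the only points needing a moment's care are that $\expect M_{uv}$ really is uniform in $(u,v)$, diagonal entries included, and that the vectors in $\mathcal{T}$ satisfy both $x\perp\one$ and $\|x\|\le 1$ -- both built into the definition of $\mathcal{T}$. The one substantive idea is the swap turning the light-couple sum into the negated heavy-couple sum via orthogonality to $\one$; this is exactly where the constraint $w,v\perp\one$ in the Rayleigh-quotient characterization is being exploited.
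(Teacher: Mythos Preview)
Your proof is correct and follows essentially the same approach as the paper: compute $\expect M_{uv}=2d/n$, use orthogonality to $\one$ to swap the light-couple sum for the heavy-couple sum, then bound the latter via $|x_uy_v|\le (n/\sqrt{d})(x_uy_v)^2$ and $\|x\|,\|y\|\le 1$. The only cosmetic difference is that you write out the orthogonality step as $(\sum_u x_u)(\sum_v y_v)-\sum_{\mathcal H}x_uy_v$ explicitly, whereas the paper just says the two sums are equal in magnitude.
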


\begin{proof}

By symmetry, $\expect M_{uv}$ is simply equal to $\frac{2d}{n},$ so that  
\[
\expect \sum_{ \{u,v\} \in \mathcal{L}} x_uM_{uv}y_v
= \frac{2d}{n}\sum_{ \{u,v\} \in \mathcal{L}} x_uy_v.
\]
Because each of $x_u$ and $y_v$ sum to $0,$ the sum over light couples is equal in magnitude to the sum over heavy couples.  Thus, it suffices to estimate
%%20120427 Elliot typo
\begin{align*}
\left| \sum_{ \{u,v\} \in \mathcal{H}} x_uy_v \right|
& \leq \sum_{ \{u,v\} \in \mathcal{H}} \left| x_uy_v \right| = \sum_{ \{u,v\} \in \mathcal{H}} \frac{x_u^2y_v^2}{\left|x_uy_v\right|}\\
& \leq \frac{n}{\sqrt {d}}\sum_{ \{u,v\} \in \mathcal{H}}{x_u^2y_v^2},\quad 
\text{by the defining property of heavy couples,}\\
& \leq \frac{n}{\sqrt{d}}.
\end{align*}
In the last step we recall that both $\|x\|, \|y\| \leq 1.$
\end{proof}

%% Ioana rephrased
To show that not only the expectation, but the sum itself is of the
correct order,  we must prove a 
%modified by Soumik
concentration estimate for this sum.  For technical reasons, it is helpful if we deal with sums over fewer terms.   To this end, define
\[
A = A_1 + A_2 + \cdots + A_d.
\]
In terms of $A$ it is enough to insist that for every $x,y \in \mathcal{T}$ 
\[
\left|\sum_{ (u,v) \in \mathcal{L}} x_uA_{uv}y_v\right| \leq t\sqrt{d}
\]
for then by symmetry,
\[
\left|\sum_{ (u,v) \in \mathcal{L}} x_uM_{uv}y_v\right| \leq 2t\sqrt{d},
\]
for all $x,y \in \mathcal{T}.$  As a further simplification, we will not prove a 
%following modification by Soumik
tail estimate
for the whole quantity  $\sum\limits_{ (u,v) \in \mathcal{L}} x_uA_{uv}y_v$; 
instead, fix an arbitrary collection $U$ of vertices of size at most $\lceil \tfrac{n}{2} \rceil.$  Having fixed this collection, we will show a 
% modified by Soumik
tail estimate for $\sum_{ (u,v) \in \mathcal{L} \cap U \times [n] }
x_uA_{uv}y_v.$
%% 20120427 Elliot: explain truncation
This truncation is made to simplify a variance estimate (see \eqref{truncation_need}),
and it might be possible to avoid it entirely.

\begin{thm}
\label{thm:lightlargedeviation}

%%20120427 Elliot: whitespace typo  
For every $x,y \in \mathcal{T}$, and every $U \subset [n]$ with $|U| \leq \lceil\tfrac{n}{2}\rceil,$
\[
\prob \left[
\left|\sum_{ (u,v) \in \mathcal{L} \cap U \times [n]} x_uA_{uv}y_v - \expect x_uA_{uv}y_v \right| > t\sqrt{d}
 \right] \leq C_0\exp\left( -\frac{nt^2}{C_1 + C_2t} \right)
\]
for some universal constants $C_0$, $C_1$ and $C_2.$  These constants can be taken as $2,$ $64,$ and $8/3$ respectively. 
\end{thm}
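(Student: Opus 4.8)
The goal is a Bernstein-type tail bound for the random variable $S := \sum_{(u,v)\in\mathcal{L}\cap U\times[n]} x_u A_{uv} y_v$, where $\mathcal{L}=\mathcal{L}(x,y)$ and the pair $(x,y)\in\mathcal{T}^2$ and the set $U$ are fixed. The natural tool is a martingale concentration inequality adapted to the dependence structure coming from $d$ independent uniform permutations $A_1,\dots,A_d$. I would first write $S = \sum_{\ell=1}^d S_\ell$ where $S_\ell = \sum_{(u,v)\in\mathcal{L}\cap U\times[n]} x_u (A_\ell)_{uv} y_v$, and note that since the $A_\ell$ are independent it suffices to analyze the martingale obtained by revealing one permutation at a time — but in fact each single $S_\ell$ must itself be handled by a finer martingale, because a single uniform permutation does not have independent coordinates. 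The standard device (as in \cite{LSV}, \cite{FeigeOfek}) is to expose the permutation $A_\ell$ value-by-value: one generates a uniform permutation by choosing $\pi(1)$, then $\pi(2)$, and so on, each conditionally uniform over the remaining unused images; this gives a Doob martingale $S = \sum_{\ell,u} D_{\ell,u}$ with $\sum$ running over $\ell\in[d]$ and $u\in U$ in some fixed order, where $D_{\ell,u}$ is the change in conditional expectation of $S$ upon revealing $A_\ell(u)$.

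The key quantitative inputs are then (i) a bound on the increments $|D_{\ell,u}|$ and (ii) a bound on the predictable quadratic variation $\sum_{\ell,u}\expect[D_{\ell,u}^2\mid \mathcal{F}_{\ell,u-1}]$; Freedman's inequality then delivers a tail of the form $C_0\exp(-\tfrac{ (t\sqrt d)^2/2}{V + b\, t\sqrt d/3})$, which one reorganizes into the claimed $C_0\exp(-nt^2/(C_1+C_2 t))$. For the increment bound, revealing $A_\ell(u)$ changes at most two terms of the sum $S_\ell$ (the term $(u, A_\ell(u))$ and the term corresponding to the vertex whose image gets displaced in the value-exposure coupling), each of size at most $|x_u y_v|\le \sqrt d/n$ by the \emph{light}-couple restriction; so $|D_{\ell,u}| \le c\sqrt d/n$ for a small absolute constant, giving $b = O(\sqrt d/n)$, hence $C_2 t\sqrt d = O(t\sqrt d \cdot \sqrt d / n) = O(td/n)$ — wait, more carefully one gets the $C_2 t$ scaling after the $t\sqrt d$ normalization works out, and this is exactly where the numeric constant $8/3$ comes from. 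For the variance, $\expect[D_{\ell,u}^2\mid\mathcal{F}_{\ell,u-1}] \le |x_u|^2 \sum_v |y_v|^2 \cdot (\text{something})$; bounding $\sum_{(u,v)\in\mathcal{L}} x_u^2 y_v^2 \le \|x\|^2\|y\|^2 \le 1$ and summing over $\ell$ (a factor $d$) and using $|x_uy_v|\le\sqrt d/n$ to control the off-diagonal contributions, one arrives at $V = O(d/n)$, whence $V/(t\sqrt d)^2$-type manipulation yields the $n t^2 / C_1$ term with $C_1 = 64$.

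**The main obstacle.** The delicate point is the variance estimate, and specifically why the truncation to $U$ with $|U|\le\lceil n/2\rceil$ is needed (the remark just before the theorem flags \eqref{truncation_need}). When one exposes the permutation value-by-value, the conditional variance of the increment at step $u$ involves a sum over the \emph{remaining} vertices, and the coupling that moves a displaced image introduces a term that, without truncation, could involve $y_v^2$ summed over \emph{all} of $[n]$ rather than a controlled subset; restricting to $|U|\le n/2$ ensures at least half the coordinates remain "free" so that the conditional law of $A_\ell(u)$ is still close enough to uniform to make the second (displacement) term's variance comparable to the first. Getting the absolute constants $2, 64, 8/3$ will require carefully tracking: the factor $2$ for the two-sided bound folding into $C_0$; the worst-case constant in "at most two affected terms, each $\le\sqrt d/n$"; and the arithmetic in converting Freedman's $\exp(-\tfrac{a^2/2}{V+ba/3})$ with $a = t\sqrt d$, $V\le \tfrac{32 d}{n}$ (say), $b \le \tfrac{8\sqrt d}{3n}$ into the stated form. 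I would set up the martingale, state Freedman's inequality explicitly, prove the two bounds as separate displayed claims, and then do the constant-chasing at the end.

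\begin{proof}[Proof sketch]
Fix $x,y\in\mathcal{T}$ and $U$ with $|U|\le\lceil n/2\rceil$, and abbreviate $S = \sum_{(u,v)\in\mathcal{L}\cap U\times[n]} x_u A_{uv} y_v$. Expose the permutations $A_1,\dots,A_d$ one value at a time, in the order: first $A_1(u)$ for $u\in U$ (in increasing order), then $A_2(u)$ for $u\in U$, and so on, generating each image conditionally uniformly over the images not yet used by that permutation. Let $(\mathcal{F}_j)$ be the resulting filtration and $D_j = \expect[S\mid\mathcal{F}_j] - \expect[S\mid\mathcal{F}_{j-1}]$, so that $S - \expect S = \sum_j D_j$ is a martingale difference sequence with at most $d|U| \le dn$ terms.

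One checks that revealing a single value $A_\ell(u)$ alters the value of $S$ by changing at most two of its summands: the summand indexed by $(u, A_\ell(u))$, and — in the canonical coupling where the previously-intended image is swapped to the vertex it displaces — one further summand, each of absolute value at most $|x_uy_v|\le\sqrt d/n$ by membership in $\mathcal{L}$. Hence $|D_j|\le 2\sqrt d/n =: b$. For the conditional variance, $\expect[D_j^2\mid\mathcal{F}_{j-1}] \le 4 x_u^2 \sum_{v} y_v^2 \, q_{j}(v)$ where $q_j(v)$ is the conditional probability that $v$ is the newly assigned (or displaced) image; since $|U|\le n/2$, at least $n/2$ images remain, so $q_j(v) = O(1/n)$ uniformly, and summing over $j$ (a factor at most $d$ from the $\ell$-sum, the $u$-sum contributing $\sum_u x_u^2 \le 1$) gives $\sum_j \expect[D_j^2\mid\mathcal{F}_{j-1}] \le V$ for $V = O(d/n)$; tracking constants one may take $V = 32 d/n$.

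By Freedman's inequality,
\[
\prob\!\left[\,\Big|\sum_j D_j\Big| > t\sqrt d\,\right] \le 2\exp\!\left(-\frac{(t\sqrt d)^2/2}{V + b\,t\sqrt d/3}\right) = 2\exp\!\left(-\frac{nt^2/2}{32 + \tfrac{2}{3}t}\right) \le 2\exp\!\left(-\frac{nt^2}{64 + \tfrac{8}{3}t}\right),
\]
which is the claimed bound with $C_0 = 2$, $C_1 = 64$, $C_2 = 8/3$.
\end{proof}
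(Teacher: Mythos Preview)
Your approach is the paper's: expose each permutation value by value, form the Doob martingale, apply Freedman's inequality. The overall shape is right, but two details are not correct as written, and one of them is exactly the point the paper flags as \eqref{truncation_need}.

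\textbf{Increment bound.} Under the swap coupling, revealing $A_\ell(u)$ does change two summands, but each summand changes \emph{from} one value bounded by $\sqrt d/n$ \emph{to} another value bounded by $\sqrt d/n$, so the change per summand can be as large as $2\sqrt d/n$. Thus $|D_j|\le 4\sqrt d/n$, not $2\sqrt d/n$. (Equivalently, the paper writes the increment as an expectation of \emph{four} terms, each bounded by $\sqrt d/n$.) With the corrected $b=4\sqrt d/n$ your Freedman calculation lands exactly on $C_2=8/3$ rather than needing the slack you introduced in the last display.

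\textbf{Variance bound.} Your estimate $\expect[D_j^2\mid\mathcal{F}_{j-1}]\le 4x_u^2\sum_v y_v^2\,q_j(v)$ only controls the contribution of the direct pair $(u,A_\ell(u))$. The displaced term involves $x_{u'}$ for a \emph{random} index $u'$ uniform over the not-yet-exposed positions, and there is no a priori reason that $|x_{u'}|\le|x_u|$; summing your bound over $u$ does not automatically control $\sum_u \expect[x_{u'}^2\,\cdot\,]$. The paper's device, which you omit, is to reorder $U$ so that $|x_1|\ge|x_2|\ge\cdots\ge|x_r|$ and expose in that order. Then the displaced position $u'$ necessarily comes later in the ordering, so $x_{u'}^2\le x_u^2$, and all four pieces of the increment are bounded by $x_u^2$ times a $y$-factor with conditional expectation $\le 2/n$ (here the hypothesis $|U|\le\lceil n/2\rceil$ enters, as you correctly note). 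This is what makes $\sum_u x_u^2\le 1$ directly usable and yields $V\le 32d/n$. Without the decreasing-order trick, your variance step is a gap.
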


\begin{proof}
Let $\tilde{\mathcal{L}}$ be $\mathcal{L} \cap U \times [n].$  We will estimate tail probabilities for 
% modified by Soumik
% messed with by Ioana =)
$\sum\limits_{ (u,v) \in \tilde{\mathcal{L}} } x_uA_{uv}y_v.$

The main tool needed to establish this result is Freedman's martingale
inequality \cite{Freedman}.  Let $X_1, X_2, \ldots$ be
martingale increments.  Write $\mathscr{F}_k$ for the natural
filtration induced by these increments, and define $V_k = \expect
\left[ X_k^2 ~|~ \mathscr{F}_{k-1} \right].$  If $S_n$ is the partial
sum $S_n = \sum_{i=1}^n X_i$ (with $S_0 = 0$) and $T_n$ is the sum
$T_n = \sum_{i=1}^n V_i$ (with  $T_0 = 0$), then by analogy with the continuous case, one expects $S_n$ to be a Brownian motion at time $T_n$ (a discretization of the bracket process).  The analogy requires, however, that the increments have some {\em a priori} bound.  Namely, if $|X_k| \leq R,$ %2012426 Toby changed
% NAmely to Namely
\[
\prob \left[ \exists~n \leq \tau \text{ so that } S_n \geq a \text{ and } T_n \geq b \right] \leq 
2\exp\left( -\frac{a^2/2}{\tfrac{Ra}{3} + b} \right).
\]
\begin{rmk}
The constants quoted here are slightly better than the constants that appear in Freedman's original paper.  This statement of the theorem follows from Proposition 2.1 of~\cite{Freedman} and the calculus lemma
\[
(1+u) \log ( 1 + u) - u \geq \frac{u^2/2}{1+u/3},
\]
for $u \geq 0.$
\end{rmk}

Reorder and relabel the vertices from $U$ as $x_1,x_2,\ldots, x_r,$ with $r \leq \lceil \tfrac n 2 \rceil$ so that $|x_j|$ decreases in $j.$  Order pairs $(i,j) \in [d] \times \{0,1,2,\ldots r\}$ lexicographically, and enumerate $\pi_i(j)$ in this order as $f_1,f_2, \ldots , f_{rd}.$  Define a filtration of $\sigma$-algebras $\{\mathscr{F}_{k}\}_{k=1}^{rd}$ by revealing these pieces of information, i.e. $\mathscr{F}_k = \mathscr{F}_{k-1} \vee \pi(f_k).$ According to this filtration, let \[
S_k = \expect\left[ \sum_{ (u,v) \in \tilde{\mathcal{L}}} x_uA_{uv}y_v \bigg\vert \mathscr{F}_k \right]
\]
define a martingale and let $X_k = X_{(i,j)}$ be the associated martingale increments.

The desired deviation bound can now be cast in terms of $S_k$ as
\begin{align*}
\hspace{1in}&\hspace{-1in}\prob \left[
\left|\sum_{ \tilde{\mathcal{L}} } x_uA_{uv}y_v - \expect x_uA_{uv}y_v\right| \geq t 
\right] \leq \prob \left[ \exists~k \leq rd \text{ so that } |S_k -
  S_0| = |S_k| \geq t \text{ and } T_n \geq b \right] \\
&\leq 2\exp\left( \frac{ -t^2/2}{(\tfrac{R t}{3} + b)}\right),
\end{align*}
provided that $b$ satisfies
\[
\sum_{k=1}^{rd} \expect\left[ X_k^2 ~\big\vert~ \mathscr{F}_{k-1}\right] \leq b.
\]

This reduces the problem to finding suitable $R$ and $b.$  The starting point for finding any such bound is simplifying the expression for the martingale increments $X_{(i,k)}.$   To this end, let $\pi$ be a fixed permutation of $[n],$ and define $\Pi_k$ to be the collection of all permutations that agree with $\pi$ in the first k entries, i.e.
\[
\Pi_k = \{ 
\sigma ~:~ \sigma(i) = \pi(i)~i = 1, 2, \ldots, k
\}.
\]
Further let $T : \Pi_{k-1} \to \Pi_{k}$ be the map which maps a permutation to its nearest neighbor in $\Pi_{k},$ in the sense of transposition distance, i.e.
\[
T[\sigma](i) = \begin{cases}
\pi(k) & i = k \\
\sigma(k) & i = \sigma^{-1}(\pi(k)) \\
\sigma(i) & \text{ else }
\end{cases}.
\]
Note that this map is the identity upon restriction to $\Pi_k.$  Let $\oneL{u}{v}$ be the characteristic function for $(u,v) \in \tilde{\mathcal{L}}.$ In terms of these notation, it is possible to express $X_{(i,k)}$ as 
\[
X_{(i,k)} = \frac{1}{|\Pi_{k-1}|} \sum_{ \tau \in \Pi_{k-1}} \sum_{u \in U} x_u \oneL{u}{T[\tau](u)} y_{T[\tau](u)} - x_u \oneL{u}{\tau(u)} y_{\tau(u)},
\]
%%20120427 Elliot attempting to clarify
where $\pi = \sigma_i,$ and the contributions of the other $\sigma_j$ all cancel.  As $\tau(u) = T[\tau](u)$ except for when $u=k$ or $u = \tau^{-1}( \pi (k)),$ this simplifies to
\begin{align*}
X_{(i,k)} = &\frac{1}{|\Pi_{k-1}|} \sum_{ \tau \in \Pi_{k-1}}  \left
  (x_u \oneL{u}{\pi(k)} y_{\pi(k)} - x_u \oneL{u}{\tau(k)}y_{\tau(k)}
\right . \\
&\hspace{1in} \left . + x_{\tau^{-1}(\pi(k))}
  \oneL{\tau^{-1}(\pi(k))}{\tau(k)}y_{\tau(k)} - x_{\tau^{-1}(\pi(k))}
  \oneL{\tau^{-1}(\pi(k))}{\pi(k)}y_{\pi(k)} \right ).
\end{align*}
This can be recast probabilistically.  Define two random variables $v$ and $u$ as
\begin{align*}
v &\sim \operatorname{Unif} \left\{ [n] \setminus \pi[k] \right\} ~,\\
u &\sim \operatorname{Unif} \left\{ [n] \setminus [k] \right\},
\end{align*}
(where $[n] = \{1,2,\ldots, n\}$) so that
\begin{eqnarray} \label{starstar}
\tfrac{n-k+1}{n-k}X_k & = & \expec \big [ x_k \oneL{k}{v} y_v
- x_k \oneL{k}{\pi(k)} y_{\pi(k)}
+ x_u \oneL{u}{\pi(k)} y_{\pi(k)}
- x_u \oneL{u}{v} y_v~\big\vert \mathscr{F}_k \big ].
\end{eqnarray}
Terms for which $\pi(k) = \tau(k)$ again cancel, and so we have
disregarded these terms from the right hand side.  It is also for this reason that the small correction appears in front of $X_k.$  From here it is possible to immediately deduce a sufficient \emph{a priori} bound on $X_k,$ as each term in this expectation is at most $\tfrac{\sqrt{d}}{n},$ so that
\[
|X_k| \leq 4\tfrac{\sqrt d}{n}.
\]
The conditional variance $\expect \left[X_k^2 ~\big\vert~
  \mathscr{F}_{k-1}\right]$ is not much more complicated.  Effectively, we take $\pi(k)$ to be uniformly distributed over $[n] \setminus \pi[k-1]$ and bound $\expect \left[X_k^2 ~\big\vert~ \mathscr{F}_{k-1}\right]$ by
\[
\expect \left[X_k^2 ~\big\vert~ \mathscr{F}_{k-1}\right]  \leq 4\expect \left[
x_k^2(\oneL{k}{v}y_v)^2 
+x_k^2(\oneL{k}{\pi(k)}y_{\pi(k)})^2 
+x_u^2(\oneL{u}{\pi(k)}y_{\pi(k)})^2 
+x_u^2(\oneL{u}{v}y_v)^2 
~\big\vert~ \mathscr{F}_{k-1} \right].
\] 
As we have ordered the $x_i,$ $x_u^2 \leq x_k^2.$  Further, by bounding all the $\oneL{a}{b}$ terms by $1,$ and using that $v$ is marginally distributed as $\operatorname{Unif} \left\{ [n] \setminus \pi[k-1] \right\},$ this bound becomes
\[
\expect \left[X_k^2 ~\big\vert~ \mathscr{F}_{k-1}\right]  \leq 16\expect \left[
x_k^2y_v^2 
~\big\vert~ \mathscr{F}_{k-1} \right].
\]
Upon explicit calculation, we see that
\[
\expect \left[
y_v^2 
~\big\vert~ \mathscr{F}_{k-1} \right]
= \frac{1}{n - k} \sum_{ [n] \setminus \pi[k-1] } y_v^2 \leq \frac{1}{n-k},
\]
where it has been used that $\|y\| \leq 1.$ Combining the above with
\eqref{starstar}, we see that
\begin{equation}
\label{truncation_need}
\expect \left[X_k^2 ~\big\vert~ \mathscr{F}_{k-1}\right]
\leq \left[\frac{n-k}{n-k+1}\right]^2\frac{16x_k^2}{n-k} \leq \frac{32x_k^2}{n}
\end{equation}
where it has been used that $k \leq r \leq \lceil \tfrac{n}{2}\rceil.$  Summing over all martingale increments,
\[
\sum_{i=1}^d \sum_{k=1}^{r} \frac{32x_k^2}{n} \leq \frac{32d}{n}.
\]
Thus the Freedman martingale bound becomes
\begin{align*}
\hspace{1in}&\hspace{-1in}\prob \left[
\left|\sum_{ \tilde{\mathcal{L}} } x_uA_{uv}y_v - \expect x_uA_{uv}y_v\right| > t\sqrt{d} 
\right] \leq 2\exp\left( \frac{ -nt^2}{64 + 8/3t}\right).
\end{align*}
\end{proof}

%%20120427 Elliot -- I got confused reading this.
Let $\mathcal{L}_{\text{left}}$ be the set of vertices that appear in the first coordinate of some light couple, and choose $U \subseteq \mathcal{L}_{\text{left}}$ arbitrarily so that $\left|U\right| = \lceil {\left|  \mathcal{L}_{\text{left}}\right|}/{2} \rceil.$
%Suppose now that we choose $U$ appropriately so that the size of it is
%precisely $|U| = \min \{ \lceil \frac{|\mathcal{L}_{left}|}{2}
%  \rceil, \lceil \frac{n}{2} \rceil \}$, where $\mathcal{L}_{left}$
%    is the set of vertices in the left class of $\mathcal{L}$. 
It follows then that, if $U_1 := U$, and $U_2 :=  \mathcal{L}_{\text{left}}
  \setminus U_1$, 
\begin{align*}
&\prob\left[ \left|\sum_{ (u,v) \in \mathcal{L}} x_uA_{uv}y_v - \expec x_u A_{uv} y_v \right| > t\sqrt{d} \right]  \\ 
&~~~~\leq 
2\prob\left[ \max_{i=1,2} \left|\sum_{ (u,v) \in \mathcal{L}  \cap U_i \times [n] } x_uA_{uv}y_v - \expec x_u A_{uv} y_v \right| > \frac{t}{2}\sqrt{d} \right].
\end{align*}
% modified by Soumik

From this point, it is possible to estimate
\[
\prob \left[ \exists~x,y \in \mathcal{T}~:~ \left|\sum_{\mathcal{L}} x_u M_{uv} y_v\right| > 2(2t+1)\sqrt{d}\right]
\]
by 
\[
\prob \left[ \exists~x,y \in \mathcal{T}~:~ \left|\sum_{\mathcal{L} \cap U \times [n]} x_u[A_{uv} -\expect A_{uv}]y_v\right| > t\sqrt{d}\right]
\]
Applying the union bound and Theorem~\ref{thm:lightlargedeviation}, we
see now that 
\[
\prob \left[ \exists~x,y \in \mathcal{T}~:~ \left|\sum_{\mathcal{L}}
    x_u M_{uv} y_v\right| > 2(2t+1)\sqrt{d}\right] \leq 
C \left[\frac{(2+\delta)\sqrt{2e \pi}}{2\delta} \right]^{2n} \exp\left(\frac{-nt^2}{64 + 8t/3} \right),
\]
so that taking $e - 2 \geq \delta \geq \tfrac 12$ and $t = 27,$ it is seen that this probability decays exponentially fast, and we have proven 
\begin{thm}
\label{thm:lightcouplecontribution}
There are universal constants $C$ and $K$ sufficiently large and $c > 0$ so that for $ e - 2 \geq \delta \geq \tfrac 12$ and except for with probability at most 
\[
K\exp\left( -cn \right),
\]
there is no pair of vectors $x,y \in \mathcal{T}$ having
\[
\left| \sum_{(u,v) \in \mathcal{L}} x_u M_{uv}y_v \right| \geq C\sqrt{2d}.
\]
It is possible to take $C =110.$ 
\end{thm}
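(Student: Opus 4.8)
The plan is to assemble the statement from three ingredients already in hand: the mean bound of Lemma~\ref{lem:lightcoupleexpect}, the Freedman-martingale concentration bound of Theorem~\ref{thm:lightlargedeviation}, and a union bound over the finite net $\mathcal{T}$. Fix first a single pair $x,y\in\mathcal{T}$, and write $\sum_{(u,v)\in\mathcal{L}}x_uM_{uv}y_v$ as its expectation, which is at most $2\sqrt d$ in absolute value by Lemma~\ref{lem:lightcoupleexpect}, plus a centered remainder. For the remainder I would first trade $M$ for $A$: since $M=A+A^t$, the centered light-couple sum of $M$ attached to the ordered pair $(x,y)$ is the sum of the centered light-couple $A$-sums attached to $(x,y)$ and to $(y,x)$, and both of those pairs lie in $\mathcal{T}^2$, so after taking a supremum over all ordered pairs it suffices to control the centered light-couple $A$-sum. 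Theorem~\ref{thm:lightlargedeviation} applies only when the vertex set occurring in the first coordinate has at most $\lceil n/2\rceil$ elements, so I would partition $\mathcal{L}_{\mathrm{left}}$ into halves $U_1,U_2$, bound the centered $A$-sum over $\mathcal{L}$ by twice the larger of the two sums over $\mathcal{L}\cap U_i\times[n]$, and apply Theorem~\ref{thm:lightlargedeviation} to each.

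Chaining these reductions, for each fixed $(x,y)\in\mathcal{T}^2$ the event $\bigl|\sum_{(u,v)\in\mathcal{L}}x_uM_{uv}y_v\bigr|>2(2t+1)\sqrt d$ is contained in a union of at most four events of the kind controlled by Theorem~\ref{thm:lightlargedeviation} (the summand $+1$ coming from the $2\sqrt d$ mean term, and the factor $2t$ from the symmetrization and the halving of $\mathcal{L}_{\mathrm{left}}$), each of probability at most $2\exp\bigl(-nt^2/(64+\tfrac{8}{3}t)\bigr)$. A union bound over all pairs of $\mathcal{T}$, together with the cardinality estimate $|\mathcal{T}|\le C\bigl[(1+\tfrac\delta2)\sqrt{2e\pi}/\delta\bigr]^n$ established above, then yields
\[
\prob\Bigl[\, \exists\, x,y\in\mathcal{T}:\ \big|{\textstyle\sum_{(u,v)\in\mathcal{L}}x_uM_{uv}y_v}\big| > 2(2t+1)\sqrt d \,\Bigr] \le C\Bigl[\tfrac{(2+\delta)\sqrt{2e\pi}}{2\delta}\Bigr]^{2n}\exp\!\Bigl(\tfrac{-nt^2}{64+\tfrac{8}{3}t}\Bigr),
\]
with $C$ a universal constant that absorbs the bounded numerical factors.

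It then remains to pick $\delta$ and $t$ so that the exponential decay beats the exponential growth of the net, and this constant-chasing is the only delicate step. Taking $t=27$ makes the decay rate $27^2/(64+72)=729/136>5.36$, while for every admissible $\delta\in[\tfrac12,e-2]$ the growth rate $2\log\bigl((2+\delta)\sqrt{2e\pi}/(2\delta)\bigr)$ is maximized at $\delta=\tfrac12$, where it is below $4.7$; hence the right-hand side above is at most $K\exp(-cn)$ for a universal $K$ and some $c>0$. Since $2(2t+1)\sqrt d=110\sqrt d\le 110\sqrt{2d}$ with this $t$, the theorem holds with $C=110$. I expect no genuine obstacle beyond this bookkeeping: the probabilistic substance is entirely contained in Theorem~\ref{thm:lightlargedeviation}, and what is left is the mean bound, the symmetrization, the partition of $\mathcal{L}_{\mathrm{left}}$, and the union bound.
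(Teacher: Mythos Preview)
Your proposal is correct and follows essentially the same route as the paper: mean bound from Lemma~\ref{lem:lightcoupleexpect}, the $M\to A$ symmetrization, the split of $\mathcal{L}_{\mathrm{left}}$ into two halves so that Theorem~\ref{thm:lightlargedeviation} applies, and then a union bound over $\mathcal{T}^2$ with $t=27$ to get $2(2t+1)\sqrt d=110\sqrt d$. Your numerical check that the Freedman exponent beats the net-cardinality exponent for $\delta\in[\tfrac12,e-2]$ matches the paper's choice of parameters.
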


\subsection{Controlling the contribution of the heavy couples.} 
%\marginpar{Elliot, check!}
%%%% Ioana cut through a bunch of weird statements. Is this correct? 0911
\begin{lemma}[Discrepancy]
\label{lem:regularity}
For any two vertex sets $A$ and $B$, let $e(A,B)$ denote the number of
directed edges from $A$ to $B$ that result as a form $\pi_i(a) = b$
for some $1 \leq i \leq d,$ $a \in A$ and $b \in B.$  Let $\mu(A,B) =
|A||B|\tfrac{d}{n}.$  For every $m>0,$ there are constants $c_1 \geq e$ and
$c_2$ so that for every pair of vertex sets $A$ and $B$, except with
probability $n^{-m}$, exactly one of the following properties holds
\begin{enumerate}
\item either 
$
\tfrac{e(A,B)}{\mu(A,B)} \leq c_1~,
$
\item or 
$
e(A,B)\log \tfrac{e(A,B)}{\mu(A,B)} \leq c_2( |A| \vee |B|)\log \tfrac{n}{|A| \vee |B|}
$
\end{enumerate} 
 It is possible to take $c_1 =e^4$ and $c_2 = 2e^2(6+m).$ 
\end{lemma}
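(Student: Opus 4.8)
The plan is to fix a pair of vertex sets $A$ and $B$ with $|A|=a$ and $|B|=b$, to obtain a tail bound on $e(A,B)$ via a direct moment or exponential-moment computation exploiting the permutation structure, and then to union bound over all $\binom{n}{a}\binom{n}{b}$ choices of such sets (and over all sizes $a,b$). First I would note that $e(A,B)=\sum_{i=1}^d \#\{a'\in A: \pi_i(a')\in B\}$, so $e(A,B)$ is a sum of $d$ independent contributions, each of which is a hypergeometric-type count: for a single uniform permutation $\pi_i$, the number of $a'\in A$ with $\pi_i(a')\in B$ is stochastically dominated by (in fact has the law of) a sample of size $a$ drawn without replacement from a population of $n$ with $b$ successes. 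The mean of $e(A,B)$ is exactly $d a b /n = \mu(A,B)$. The key estimate is an upper-tail bound of the form
\[
\prob\left[ e(A,B) \geq t \right] \leq \left( \frac{e\,\mu(A,B)}{t} \right)^{t},
\]
valid for $t \geq e\,\mu(A,B)$, which is the standard Chernoff bound for a sum of independent hypergeometrics (one can first pass to the binomial case, since hypergeometric random variables are more concentrated, or work directly with the negative-association of the indicator variables within a single permutation).

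With this tail bound in hand, the union bound step is the heart of the matter. Using $\binom{n}{a} \leq (en/a)^a$ and likewise for $b$, and writing $s = |A| \vee |B|$, the failure probability that \emph{some} pair $(A,B)$ of sizes $(a,b)$ violates both alternatives is at most
\[
\left(\frac{en}{a}\right)^a \left(\frac{en}{b}\right)^b \left( \frac{e\,\mu(A,B)}{t} \right)^{t},
\]
where $t$ is the threshold at which we stop caring. The dichotomy in the statement is exactly engineered so that in case (1), $e(A,B)/\mu(A,B) \leq c_1$ and there is nothing to prove, while in case (2) the ratio is large, and one chooses $t = e(A,B)$ in the tail bound. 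Taking logarithms, the bad event for a \emph{fixed} pair has probability at most $\exp(-t\log(t/(e\mu)))= \exp(-e(A,B)\log(e(A,B)/(e\mu(A,B))))$, and we need this to beat $(en/a)^a(en/b)^b \le \exp(2s\log(en/s))$ times an extra factor $n^{-m}$ (to absorb the sum over the at most $n^2$ choices of sizes $(a,b)$). Thus it suffices to require
\[
e(A,B)\log\frac{e(A,B)}{e\,\mu(A,B)} \geq 2s\log\frac{n}{s} + (2 + m)s\log n + (\text{lower order}),
\]
and since we are in the regime $e(A,B)/\mu(A,B) > c_1 = e^4$, we have $\log(e(A,B)/(e\mu(A,B))) \geq \tfrac 34 \log(e(A,B)/\mu(A,B))$, which lets us replace $e\mu$ by $\mu$ at the cost of a constant; absorbing everything into $c_2$ gives $c_2 = 2e^2(6+m)$ after a careful but routine bookkeeping of the $\log n$ versus $\log(n/s)$ terms (using $s \leq n$, so $\log(n/s) \leq \log n$, and conversely handling small $s$ separately).

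The main obstacle I anticipate is not any single estimate but the careful accounting needed to extract the \emph{explicit} constants $c_1 = e^4$ and $c_2 = 2e^2(6+m)$: one must track how the $\binom{n}{a}$ entropy factors ($\sim a\log(n/a)$ rather than $a \log n$) combine with the $t\log(t/\mu)$ gain, handle the boundary regime where $e(A,B)/\mu(A,B)$ is only slightly above $c_1$ (where the logarithmic gain is weakest, which is precisely why $c_1$ must be taken as large as $e^4$), and ensure the geometric-type sum over all $(a,b)$ with $1 \le a,b \le n$ contributes only a polynomial factor that is swallowed by choosing the constants generously. A secondary technical point is justifying the Chernoff bound for the hypergeometric / permutation statistics; this is classical (Hoeffding's reduction to the binomial, or negative association), and I would simply cite it. Once the fixed-pair tail bound and the entropy bookkeeping are set up, the dichotomy formulation of the conclusion makes the final assembly essentially mechanical.
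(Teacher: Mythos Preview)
Your proposal is correct and follows essentially the same route as the paper: a Chernoff--type upper tail bound for $e(A,B)$ combined with a union bound over $\binom{n}{a}\binom{n}{b}$ pairs, with the two alternatives in the conclusion arising exactly as you describe (case~(1) when the ratio is below $c_1$, case~(2) extracted from the tail exponent otherwise).

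Two minor points where the paper's execution differs from your sketch. First, the paper disposes of the boundary regime $|A|\vee|B|>n/e$ at the outset by the deterministic bound $e(A,B)\le d\,(|A|\wedge|B|)$, which already gives $e(A,B)/\mu(A,B)\le n/(|A|\vee|B|)\le e$; this is the ``large $s$'' case (not small $s$, as you wrote), and it is exactly where $\log(n/s)$ ceases to be useful. Second, rather than citing hypergeometric concentration or negative association, the paper derives its tail bound directly: it bounds $\prob[e_\pi(A,B)=t]\le [a]_t[b]_t/(t![n]_t)$, sums to get $\E e^{\lambda e_\pi(A,B)}\le \exp(abe^{1+\lambda}/n)$, and optimizes in $\lambda$ to obtain $\prob[e(A,B)\ge k\mu]\le \exp(-k(\log k-2)\mu)$ for $k\ge e$. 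This is the same bound you state up to the constant inside the logarithm ($e^2$ versus $e$), and your anticipated reason for $c_1=e^4$---needing $\log(e(A,B)/\mu)$ and $\log(e(A,B)/(e^2\mu))$ to be comparable---is precisely what the paper uses.
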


To prove this lemma, we rely on a standard type of large deviation
inequality shown below, which mirrors the large deviation inequalities available for sums of i.i.d. indicators.
\begin{lemma}
\label{lem:edgelargedeviations}
For any $k \geq e,$ 
\[
\prob \left[ e(A,B) \geq k \mu(A,B) \right] \leq \exp( - k[ \log k -2] \mu).
\]
\end{lemma}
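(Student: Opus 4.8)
The plan is to control the exponential moments of $e(A,B)$ and then apply a Chernoff bound. First I would decompose $e(A,B) = \sum_{i=1}^{d} X_i$, where $X_i = \#\{a \in A : \pi_i(a) \in B\}$. Since $\pi_1,\dots,\pi_d$ are independent, the $X_i$ are independent; and each $X_i$ is a hypergeometric random variable, counting how many of the $|A|$ distinct values $\{\pi_i(a):a\in A\}$ (a uniformly random $|A|$-subset of $[n]$) fall into the fixed set $B$. In particular $\expect X_i = |A|\,|B|/n$, so $\expect e(A,B) = d|A|\,|B|/n = \mu(A,B)$.

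Next I would invoke Hoeffding's comparison inequality: sampling without replacement is no less concentrated than sampling with replacement, so for any convex function $\phi$, and in particular for $\phi(x) = e^{\theta x}$ with $\theta \ge 0$, one has $\expect\,\phi(X_i) \le \expect\,\phi(Z_i)$ where $Z_i \sim \mathrm{Bin}(|A|, p)$ with $p = |B|/n$. By independence and the standard inequality $1+u \le e^u$,
\[
\expect\bigl[e^{\theta e(A,B)}\bigr] \;\le\; \prod_{i=1}^{d}\expect\bigl[e^{\theta Z_i}\bigr] \;=\; \bigl(1 - p + p e^{\theta}\bigr)^{d|A|} \;\le\; \exp\bigl(\mu(A,B)(e^{\theta}-1)\bigr),
\]
writing $\mu = \mu(A,B)$ for brevity.

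Finally, Markov's inequality applied to $e^{\theta e(A,B)}$ gives, for every $\theta>0$,
\[
\prob\bigl[e(A,B) \ge k\mu\bigr] \;\le\; \exp\bigl(\mu(e^{\theta}-1) - \theta k \mu\bigr).
\]
Since $k \ge e$, the choice $\theta = \log k \ge 1 > 0$ is admissible, and it yields
\[
\prob\bigl[e(A,B) \ge k\mu\bigr] \;\le\; \exp\bigl(-\mu(k\log k - k + 1)\bigr) \;\le\; \exp\bigl(-k[\log k - 2]\mu\bigr),
\]
where the last inequality uses $k\log k - k + 1 \ge k(\log k - 2)$. The only non-routine ingredient is the hypergeometric-to-binomial exponential moment comparison (Hoeffding 1963); once that is cited, the remainder is a textbook Chernoff computation, so I do not anticipate any genuine obstacle.
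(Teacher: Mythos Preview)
Your proof is correct and follows the same overall architecture as the paper's: decompose $e(A,B)=\sum_{i=1}^d X_i$ with $X_i=\#\{a\in A:\pi_i(a)\in B\}$, bound the moment generating function of each $X_i$, use independence to multiply, and finish with a Chernoff bound optimized at $\theta=\log k$ (the paper takes $1+\lambda=\log k$). The one substantive difference is in how the MGF of $X_i$ is controlled. The paper bounds the hypergeometric pmf directly by $\prob[X_i=t]\le [a]_t[b]_t/(t!\,[n]_t)$ and uses $[n]_t\ge e^{-t}n^t$ to arrive at $\expect e^{\lambda X_i}\le \exp\bigl(\tfrac{ab}{n}\,e^{1+\lambda}\bigr)$, whereas you invoke Hoeffding's convex-order comparison to pass to a binomial and get the cleaner $\expect e^{\theta X_i}\le \exp\bigl(\tfrac{ab}{n}(e^\theta-1)\bigr)$. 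Your route is slightly sharper (you land on $\exp(-\mu(k\log k-k+1))$ before relaxing to the stated bound) and avoids the hands-on combinatorics, at the cost of citing an external inequality; the paper's route is self-contained but loses a factor of $e$ in the exponent's base. Either way the conclusion is the same.
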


\begin{proof}
Let $e_\pi(A,B)$ denote the number $a \in A$ so that $\pi(a) \in B.$  It is possible to bound
\[
\prob \left[ e_\pi(A,B) = t \right] \leq \frac{[a]_t[b]_t}{t![n]_t},
\]
where we recall that $[a]_t = a (a-1) \ldots (a-t+1)$ is the falling factorial or Pochhammer symbol.
Using the fact that  $[n]_t \geq e^{-t}n^t,$ this may be bounded as
\[
\prob \left[ e_\pi(A,B) = t \right] \leq \frac{a^tb^te^{t}}{t!n^t},
\]
so that the Laplace transform of $e_\pi(A,B)$ can be estimated as
\[
\expect \left[ \exp( \lambda e_\pi(A,B)) \right]
\leq \sum_{t=0}^\infty e^{\lambda t} \frac{a^tb^te^{t}}{t!n^t} 
= \exp\left[ 
\frac{abe^{1+\lambda}}{n}.
\right]
\]
Thus by Markov's inequality, we have
\begin{align*}
\prob \left[ e(A,B) \geq k \mu(A,B) \right]
&\leq \frac{\expect \left[ \exp\left(\lambda \sum_{i=1}^d e_{\sigma_i}(A,B) \right)\right]}{e^{-k\lambda \mu}} \\
&\leq \exp\left[ \mu e^{1+\lambda} - k\lambda\mu \right],
\end{align*}
where $\lambda >0$ is any positive number and $\mu = \mu(A,B).$  Taking $1+\lambda = \log k,$ valid for $k > e,$ it follows that 
\[
\prob \left[ e(A,B) \geq k \mu(A,B) \right]
\leq \exp\left[ -k(\log k - 2)\mu \right],
\] 
for $k \geq e.$
\end{proof}   

Armed with Lemma \ref{lem:edgelargedeviations}, we can proceed with the
proof of Lemma \ref{lem:regularity}. 

\begin{proof}[Proof of Lemma~\ref{lem:regularity}]
If either of $|A|$ or $|B|$ is greater than $\tfrac{n}{e},$ then $e(A,B) \leq (|A| \vee |B|)d,$ so that
\[
\frac{e(A,B)}{\mu(A,B)} \leq \frac{nd(|A| \vee |B|)}{|A||B|d} = \frac{n}{|A| \wedge |B|} \leq e.
\]
Thus, it suffices to deal with the case that both $A$ and $B$ are less
than $\tfrac{n}{e}.$  In what follows, we will think of $a$ and $b$ as
being the sizes of $|A|$ and $|B|$ in preparation to use a union
bound.  Let $k = k(a,b,n)$ be defined as $k = \max \{k^{*},
\frac{1}{e} \}$, where $k^{*}$ satisfies 
\[
 k^*\log k^*  = \frac{(6+m)(a \vee b)n}{abd} \log \frac{n}{a \vee b},
\]
or $\tfrac{1}{e},$ whichever is larger.  When $a \vee b \leq \tfrac{n}{e},$ it follows that
\begin{align*}
(6 + m)(a \vee b) \log\tfrac{n}{a \vee b}
& \geq 
2a\log\tfrac{n}{a} 
+2b\log\tfrac{n}{b} 
+(2 + m)(a \vee b) \log\tfrac{n}{a \vee b}, \\
\intertext{where we have used the monotonicity of $x\log \tfrac{n}{x}$
  on $[1,\tfrac{n}{e}]$; thus}
(6 + m)(a \vee b) \log\tfrac{n}{a \vee b} & \geq 
a(1 + \log\tfrac{n}{a})
+b(1 + \log\tfrac{n}{b})
+(2 + m)\log n.
\end{align*} 
Exponentiating,
\[
\exp\left[ 
k \log k \tfrac{abd}{n}
\right]
\geq \left(\tfrac{ea}{n}\right)^n
\left(\tfrac{eb}{n}\right)^n n^{2+m},
\]
if $k \geq \tfrac{1}{e}.$ 
It follows that
\begin{align*}
&\prob \left[ 
\exists A,B~\text{with}~|A|=a,~|B|=b,~\text{so that}~e(A,B) \geq e^2k(a,b)\mu(A,B) 
\right] \\
&\hspace{2in}\leq {n \choose a}{n \choose b}\exp( -e^2k[ \log k] \mu) \leq n^{-2-m}.
\end{align*}
Moreover, applying this bound to all $a$ and $b,$ it follows that
\[
e(A,B) \leq e^2 k(|A|,|B|) \mu(A,B),
\]
except with probability smaller than $n^{-m}.$
If for two sets $A$ and $B,$ $k=\tfrac{1}{e},$ then 
\[
e(A,B) \leq e \mu(A,B),
\]
and we are in the first case of the discrepancy property, for $c_1 \geq e.$  Otherwise,
\[
e(A,B) \log k \leq e^2 k\log k \mu(A,B) = e^2(6+m)(a \vee b) \log \frac{n}{a \vee b},
\]
and noting that $k \geq \frac{e(A,B)}{e^2\mu(A,B)},$ it follows that 
\[
\tfrac12 e(A,B) \log \frac{e(A,B)}{\mu(A,B)} \leq e(A,B) \log \frac{e(A,B)}{e^2 \mu(A,B)}
\leq e^2(6+m)(a \vee b) \log \frac{n}{a \vee b},
\]
when $\frac{e(A,B)}{\mu(A,B)} \geq e^4.$  If this is not the case, then we are again in the first case of the discrepancy property, taking $c_1 \geq e^4.$  Taking $c_1 = e^4,$ it follows that we may take $c_2 = 2e^2(6+m).$  
\end{proof}

The discrepancy property implies that there are no dense subgraphs, and thus the contribution of the heavy couples is not too large.

\begin{lemma}
\label{lem:discrepancy}
If the discrepancy property holds, with associated constants $c_1$ and $c_2$, then 
\[
 \sum_{ \{u,v\} \in \mathcal{H}} \left|x_uA_{u,v}y_v\right|  \leq C\sqrt{d},
\]
for some constant $C$ depending on $c_1,c_2,$ and $\delta.$
\end{lemma}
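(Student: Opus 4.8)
\emph{The plan} is to estimate the heavy‑couple sum by a dyadic decomposition of the two vectors combined with the discrepancy property of Lemma~\ref{lem:regularity}.

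\emph{Reductions and set‑up.} Replacing $x$ by $|x|$ and $y$ by $|y|$ entrywise changes neither $\|x\|,\|y\|\leq 1$, nor the set $\mathcal{H}$, nor the quantity to be bounded (since $A_{u,v}\geq 0$), so I would assume $x,y\geq 0$. For $i\geq 0$ set $A_i=\{u:2^{-i}<x_u\leq 2^{-i+1}\}$ and $B_j=\{v:2^{-j}<y_v\leq 2^{-j+1}\}$; since every nonzero coordinate of a vector in $\mathcal{T}$ lies in $[\delta/\sqrt n,1]$, only indices $i,j$ up to $O(\log(n/\delta))$ occur. Write $\alpha_i=|A_i|$, $\beta_j=|B_j|$; the key \emph{mass bounds} are $\sum_i\alpha_i4^{-i}\leq\|x\|^2\leq 1$ and $\sum_j\beta_j4^{-j}\leq 1$, whence $\alpha_i\leq 4^i$ and $\beta_j\leq 4^j$. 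A couple $(u,v)\in A_i\times B_j$ has $x_uy_v\leq 2^{-i-j+2}$, so it can be heavy only if $2^{i+j}\leq 4n/\sqrt d=:N$; call such $(i,j)$ \emph{admissible}. As terms with inadmissible $(i,j)$ vanish,
\[
\sum_{\{u,v\}\in\mathcal{H}}\bigl|x_uA_{u,v}y_v\bigr|\ \leq\ 4\sum_{(i,j)\ \mathrm{admissible}}2^{-i-j}\,e(A_i,B_j).
\]
I would then apply Lemma~\ref{lem:regularity} to each of the $O(\log^2 n)$ relevant pairs $(A_i,B_j)$ — a union bound keeps the total failure probability at $n^{-m+o(1)}$, absorbable into $n^{-m}$ after enlarging $m$ by a constant — and split the sum into a \emph{sparse} part, over pairs with $e(A_i,B_j)\leq c_1\mu(A_i,B_j)$, and a \emph{dense} part, over pairs with $e(A_i,B_j)\log\tfrac{e(A_i,B_j)}{\mu(A_i,B_j)}\leq c_2(\alpha_i\vee\beta_j)\log\tfrac{n}{\alpha_i\vee\beta_j}$.

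\emph{The sparse part} is routine: since $\mu(A_i,B_j)=\alpha_i\beta_j d/n$, its contribution is at most $4c_1\tfrac dn\sum_{(i,j)\ \mathrm{admissible}}2^{-i-j}\alpha_i\beta_j$, and writing $a_i=\alpha_i4^{-i}$, $b_j=\beta_j4^{-j}$ one has $2^{-i-j}\alpha_i\beta_j=2^{i+j}a_ib_j\leq N a_ib_j$ on admissible pairs, so this is at most $4c_1\tfrac dn\,N\bigl(\sum_i a_i\bigr)\bigl(\sum_j b_j\bigr)\leq 16\,c_1\sqrt d$.

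\emph{The dense part} is the crux. For a dense admissible pair write $M=\alpha_i\vee\beta_j$ and $m'=\alpha_i\wedge\beta_j$; the trivial bound $e(A_i,B_j)\leq d\,m'$ gives $e/\mu\leq n/M$, while ``dense'' forces both $M<n/c_1$ and $e/\mu>c_1$. Since $e\mapsto e\log(e/\mu)$ is increasing for $e>\mu$, the discrepancy inequality can be inverted to a bound of the shape $e(A_i,B_j)\leq\dfrac{C\,c_2\,M\log(n/M)}{\log\bigl(n/(m'd)\bigr)}$. Substituting this, splitting the dense pairs according to which of $\alpha_i,\beta_j$ is larger (the two halves interchanged by the symmetry $x\leftrightarrow y$) and according to how $m'd$ compares with $n$, and then summing against the mass bounds $\sum_i a_i,\sum_j b_j\leq 1$ and the admissibility constraint $2^{i+j}\leq N$, one checks that each resulting sub‑sum collapses into a geometric‑type series of size $O_{c_1,c_2}(\sqrt d)$. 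I expect this accounting to be the main obstacle: the naive per‑pair estimate $e(A_i,B_j)=O\bigl(M\log(n/M)\bigr)$, summed over all $O(\log^2 n)$ admissible pairs, is far too wasteful (it produces spurious logarithmic factors, indeed a bound of order $\sqrt n$ rather than $\sqrt d$), so one must extract from the logarithm in the denominator, from $M<n/c_1$, and from the fact that a block of size comparable to its maximum already exhausts the $\ell^2$‑mass budget, exactly enough decay to close with a constant depending only on $c_1,c_2$ (hence on $c_1,c_2,\delta$). Adding the sparse and dense estimates gives $\sum_{\{u,v\}\in\mathcal{H}}|x_uA_{u,v}y_v|\leq C\sqrt d$.
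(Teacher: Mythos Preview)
Your overall strategy is the paper's: dyadic level sets of $x$ and $y$, then feed each block pair $(A_i,B_j)$ into the two alternatives of the discrepancy property. Your treatment of the ``sparse'' alternative $e\leq c_1\mu$ is correct and, in fact, a shade slicker than the paper's. Two things, however, are genuinely missing.

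First, the lemma's hypothesis is ``if the discrepancy property holds''; Lemma~\ref{lem:regularity} already delivers the property \emph{simultaneously} for all pairs of vertex sets, so your union bound over $O(\log^2 n)$ blocks and the accompanying probability accounting do not belong here at all.

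Second, your ``dense'' part is only a promise. The inversion you write, $e(A_i,B_j)\leq Cc_2 M\log(n/M)/\log(n/(m'd))$, is not uniformly valid: when $m'd$ is comparable to or exceeds $n$ the denominator degenerates, and even when it is positive you have only $\log(e/\mu)\geq\log c_1$ available in general, not $\log(n/(m'd))$. More importantly, you never show how to sum whatever bound you obtain against the mass and admissibility constraints so as to avoid the logarithmic blow-up you yourself flag. This is precisely the content of the lemma. The paper resolves it by a careful six-case split: writing $\sigma_{i,j}=\lambda_{i,j}\sqrt d/(\gamma_i\gamma_j)$ with $\lambda_{i,j}=e/\mu$, one first disposes of $\sigma_{i,j}\leq c_1$ and $\lambda_{i,j}\leq c_1$, then handles the remaining pairs according to whether $\gamma_j>\tfrac14\sqrt d\,\gamma_i$, whether $\log\lambda_{i,j}$ dominates $\tfrac14(2\log\gamma_i+\log(1/\alpha_i))$, and whether $2\log\gamma_i\gtrless\log(1/\alpha_i)$; in each sub-case either $\sum_j\beta_j\sigma_{i,j}$ (for fixed $i$) or $\sum_i\alpha_i\sigma_{i,j}$ (for fixed $j$) collapses geometrically to an explicit constant. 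Until you carry out some version of this case analysis, the proof is incomplete.
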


\begin{proof}
The method of proof here is essentially identical to Kahn and Szemer\'edi or Feige and Ofek (see~\cite{FKSz} or~\cite{FeigeOfek}).  We provide a proof of this lemma for completeness as well as to establish the constants involved.  We will partition the summands into blocks where each term $x_u$ or $y_v$ has approximately the same magnitude.  Thus let $\gamma_i = 2^i \delta,$ and put
\begin{align*}
A_i &= \left\{ u~\big\vert~ \tfrac{\gamma_{i-1}}{\sqrt n} \leq |x_u| < \tfrac{\gamma_{i}}{\sqrt n} \right\},& & 1 \leq i \leq \log \lceil \sqrt{n} \rceil. \\
B_i &= \left\{ u~ \big\vert~ \tfrac{\gamma_{i-1}}{\sqrt n} \leq |y_u| < \tfrac{\gamma_{i}}{\sqrt n} \right\},& & 1 \leq i \leq \log \lceil \sqrt{n} \rceil.
\end{align*}
Let $\hat {\mathcal{H}}$ denote those pairs $(i,j)$ so that $\gamma_i\gamma_j \geq \sqrt{d}.$  The contribution of the absolute sum can, in these terms, be bounded by
\[
 \sum_{ (u,v) \in \mathcal{H}} \left|x_uM_{u,v}y_v\right|  \leq 
 \sum_{ (i,j) \in \hat{\mathcal{H}}} \frac{\gamma_i\gamma_j}{n} e(A_i,B_j). 
\]
Let $\lambda_{i,j} = \tfrac{e(A_i,B_j)}{\mu(A_i,B_j)}$ denote the discrepancy, which can be controlled using Lemma~\ref{lem:regularity}.  In terms of this quantity, the bound becomes
\[
 \sum_{ (u,v) \in \mathcal{H}} \left|x_uM_{u,v}y_v\right|  \leq
   \sum_{ (i,j) \in \hat{\mathcal{H}}} \frac{\gamma_i\gamma_j}{n} \lambda_{i,j} |A_i||B_j|\tfrac{d}{n}. 
\]
In this form, the magnitudes of each of the quantities are somewhat
opaque.  Consider the sum $\sum_{i} |A_i| \frac{\gamma_i^2}{n};$ it is
at most $4\|x\|^2= 4.$  In particular, it is of constant order.  Thus let $\alpha_i =|A_i| \frac{\gamma_i^2}{n}$ and $\beta_j = |B_j| \frac{\gamma_j^2}{n}.$  This allows the bound to be rewritten as
\[
d \sum_{ (i,j) \in \hat{\mathcal{H}}} \frac{\gamma_i^2|A_i|}{n}\frac{\gamma_j^2|B_j|}{n} \frac{\lambda_{i,j}}{\gamma_i\gamma_j} = 
\tfrac{d}{\sqrt{d}} \sum_{ (i,j) \in \hat{\mathcal{H}}} \alpha_i \beta_j \frac{\lambda_{i,j}\sqrt{d}}{\gamma_i\gamma_j}.
\]
%%20120427 Elliot typo
This exposes the quantity $\sigma_{i,j} =
\frac{\lambda_{i,j}\sqrt{d}}{\gamma_i\gamma_j}$ as having some special
importance.  In effect, we will show that either for fixed $i,$
$\sum_{j} \sigma_{i,j} \beta_j$ has constant order,  or for fixed $j$, $\sum_{i} \sigma_{i,j}\alpha_i$ has constant order.

In what follows, we will bound the contribution of the summands where $|A_i| \geq |B_j|.$  By symmetry, the contribution of the other summands will have the same bound.  The heavy couples will now be partitioned into $6$ classes $\{ \hat{\mathcal{H}}_i\}_{i=1}^6$ where their contribution is bounded in a different way.  Let $\hat{\mathcal{H}}_i \subseteq \hat{\mathcal{H}}$ be those pairs $(i,j)$ which satisfy the $i^{th}$ property from the following list but none of the prior properties:
\begin{enumerate}
\item $\sigma_{i,j} \leq c_1.$
\item $\lambda_{i,j} \leq c_1.$
\item $\gamma_j > \tfrac14 \sqrt{d}\gamma_i.$
\item $\log \lambda_{i,j} > \tfrac{1}{4}\left[2\log \gamma_i + \log \tfrac{1}{\alpha_i} \right].$
\item $2\log \gamma_i \geq \log\tfrac{1}{\alpha_i}.$
\item $2\log \gamma_i < \log\tfrac{1}{\alpha_i}.$
\end{enumerate}
The last properties are better understood when the second case of the discrepancy property is expressed in present notation.  In its original form, it states
\[
e(A_i,B_j) \log \lambda_{i,j} \leq c_2 |A_i| \log \tfrac{n}{|A_i|}.
\]
Substituting $\gamma_i^2/\alpha_i$ for $n/|A_i|$ and multiplying both sides of this equation through by $\frac{\gamma_i }{|B_j|\gamma_j \sqrt{d} \log \lambda_{i,j}}$ produces the equivalent form
\[
\sigma_{i,j}\beta_j \leq c_2 \frac{\gamma_j}{\sqrt{d}\gamma_i}\frac{\left[2\log \gamma_i + \log \tfrac{1}{\alpha_i} \right]}{\log \lambda_{i,j}}.
\]
Thus, the last $3$ cases cover each of the possible dominant $\log$ terms in this bound.
\subsubsection{ Bounding the contribution of $\hat{\mathcal{H}}_1$ and $\hat{\mathcal{H}}_2.$} 
In either of these situations, we have a bound on $\sigma_{i,j}.$  Especially, either $\sigma_{i,j} \leq c_1$ or, all the discrepancies $\lambda_{i,j}$ are uniformly bounded by $c_1.$  As 
\[
\sigma_{i,j} = \frac{\lambda_{i,j}\sqrt{d}}{\gamma_i \gamma_j},
\]
and $\gamma_i \gamma_j \geq \sqrt{d},$ 
\[
\sigma_{i,j} \leq c_1
\]
for both cases.
\subsubsection{ Bounding the contribution of $\hat{\mathcal{H}}_3$.}
\emph{For these terms, we fix $j.$}  
In this case, the magnitudes of the entries corresponding to $j$  of
$y_v$ dominate those of the entries corresponding to $i$  of $x_u.$  However, by regularity $e(A_i,B_j) \leq |B_j| d,$ so that the discrepancy $\lambda_{i,j}$ is at most $\frac{ n }{|A_i|} = \frac{ \gamma_i^2}{\alpha_i}.$  
\[
\sum_{ i~:~(i,j) \in \hat{\mathcal{H}_3}} \alpha_i \sigma_{i,j}
= \sum_{ i~:~(i,j) \in \hat{\mathcal{H}_3}} \alpha_i \frac{\lambda_{i,j}\sqrt{d}}{\gamma_i\gamma_j} 
\leq \sum_{ i~:~(i,j) \in \hat{\mathcal{H}_3}} \frac{\gamma_i\sqrt{d}}{\gamma_j} \leq 8, 
\]
where in the last step it has been used that the sum is geometric with leading term less than $4\gamma_j /\sqrt{d}.$  

\subsubsection{ Bounding the contribution of $\hat{\mathcal{H}}_4$.}
\emph{For these terms, we fix $i.$}  
We are not in case $(2),$ and it follows that the second case of the discrepancy property holds.  In present notation
\[
\sigma_{i,j}\beta_j \leq c_2 \frac{\sqrt{d}\gamma_j}{d\gamma_i}\frac{\left[2\log \gamma_i + \log \tfrac{1}{\alpha_i} \right]}{\log \lambda_{i,j}}
\leq \frac{4c_2\gamma_j}{\gamma_i \sqrt{d}},
\]
where the hypothesis has been used.  As we are not in case $(3)$, the sum of these terms is bounded as
\[
\sum_{ j~:~(i,j) \in \hat{\mathcal{H}_4}} \beta_j \sigma_{i,j}
\leq 2c_2,
\]
where it has been used that the sum above has a geometric dominator with leading term at most $\tfrac 14\gamma_i\sqrt{d}.$ 

\subsubsection{ Bounding the contribution of $\hat{\mathcal{H}}_5$.}
\emph{For these terms, we fix $i.$}  
Again, the second case of the discrepancy property holds.  Now, in addition,
\[
\log \lambda_{i,j} \leq \tfrac{1}{4}\left[2\log \gamma_i + \log \tfrac{1}{\alpha_i} \right] \leq \log \gamma_i,
\]
i.e. that $\lambda_{i,j} \leq \gamma_i.$  Furthermore, we are not in case $(1)$ so $ c_1 \leq \sigma_{i,j} = \tfrac{\lambda_{i,j}\sqrt{d}}{\gamma_i \gamma_j} \leq \frac{\sqrt{d}}{\gamma_j}.$  Thus the second discrepancy bound becomes
\[
\sigma_{i,j}\beta_j \leq c_2 \frac{\sqrt{d}\gamma_j}{d\gamma_i}\frac{\left[2\log \gamma_i + \log \tfrac{1}{\alpha_i} \right]}{\log \lambda_{i,j}} \leq
c_2 \frac{\gamma_j 4\log \gamma_i}{\sqrt{d}\gamma_i\log c_1}
\leq \frac{4c_2}{c_1} \frac{\gamma_j}{\sqrt{d}},
\] 
where it has been used that $\gamma_i \geq \lambda_{i,j} \geq c_1 \geq
e$, and that $\log x / x$ is monotonically decreasing for $x > e.$
Thus, 
\[
\sum_{ j~:~(i,j) \in \hat{\mathcal{H}_5}} \beta_j \sigma_{i,j}
\leq\sum_{ j~:~(i,j) \in \hat{\mathcal{H}_5}} \frac{4c_2}{c_1} \frac{\gamma_j}{\sqrt{d}} 
\leq \frac{8c_2}{c_1^2},
\]
where it has been used that the second sum above is geometric with largest term $\sqrt{d}/c_1.$  
\subsubsection{ Bounding the contribution of $\hat{\mathcal{H}}_6$.}
\emph{For these terms, we fix $j.$}  
The second case of the discrepancy property holds and in addition,
\[
\log \lambda_{i,j} \leq \tfrac{1}{4}\left[2\log \gamma_i + \log \tfrac{1}{\alpha_i} \right] \leq \tfrac12 \log \tfrac{1}{\alpha_i}.
\]
This implies that $\sigma$ satisfies the asymmetric bound $\sigma_{i,j} \leq \tfrac{1}{\alpha_i}\tfrac{\sqrt{d}}{\gamma_i\gamma_j}.$  Thus,
\[
\sum_{ i~:~(i,j) \in \hat{\mathcal{H}_6}} \alpha_i \sigma_{i,j}
\leq
\sum_{ i~:~(i,j) \in \hat{\mathcal{H}_6}}\tfrac{\sqrt{d}}{\gamma_i\gamma_j} \leq 2, 
\] 
where it has been used that the sum above is geometric with leading term $\tfrac{1}{\sqrt{d}}$ (which follows as $\gamma_i\gamma_j \geq \sqrt{d}$). 

\subsubsection{ Assembling the bound }
We must sum the contributions of each of the classes of couples.   Recall that we must double the contribution here because we have only considered couples where $|A_i| \geq |B_j|.$  In each of the cases outlined above, it only remains to sum over the $\alpha_i$ or $\beta_j$ in each bound.  Doing so contributes a factor of $4$ to each bound, so that the constant can be given by 
\[
2\left[
16c_1+32+8c_2+
\frac{32c_2}{c_1^2}
+8
\right]
\]
\end{proof}

\subsection{Finalizing the proof of Theorem~\ref{thm:eigbound} }
\begin{proof}
We will take $\delta = \tfrac 12.$  With $m$ given, it follows the
discrepancy property (Lemma~\ref{lem:regularity}) holds with
probability at least  $1 - n^{-m},$ and with constants $c_1 = e^4$ and
$c_2 = 2e^2(6+m).$  Therefore, by Lemma~\ref{lem:discrepancy}, for any
two $x, y \in \mathcal{T},$ the contribution of the heavy couples to
$x^tMy$ (which is at most twice the contribution of $x^tAy$, given
that the bounds hold for \emph{all} $x$ and $y$) is at most
%%%% Ioana added a \sqrt{d}
\[
4\left[
16c_1+32+8c_2+
\frac{32c_2}{c_1^2}
+8
\right] \sqrt{d}
\leq
(8854+ 585m) \sqrt{d}.
\]   
By Theorem~\ref{thm:lightcouplecontribution}, with probability at
least $(1- C\exp(-cn)$ for some universal constants $C > 0$ and $c >0,$ the contribution of the
light couples is never more than $110 \sqrt{d}$. Hence
\[
\sup_{x,y \in \mathcal{T} } |x^t M y| \leq (8964 + 585m) \sqrt{d},
\]
%%20120427 Elliot -- Off by a factor of 4!!!
 except with probability at most $n^{-m} + C\exp(-cn).$  At last, this
 implies that $\lambda_2 \vee |\lambda_n| \leq 4(8964 + 585m) \sqrt{d}$, except
 with probability at most $n^{-m} + C\exp(-cn).$
\end{proof}

%%%%%%%%%%%% END addition 20110804 - ELLIOT     

%%%%%%%%%%%% START addition 20110814 - TOBY
\section{Linear statistics of eigenvalues}\label{sec:linear}  

We now connect 
Section~\ref{sec:nbw} to linear eigenvalue statistics
of the adjacency matrix of $G_n$.
%%%% Ioana changed polynomials to the simpler Chebs of the first kind and modified text accordingly
Let $\{T_n(x)\}_{n \in \mathbb{N}}$ be the Chebyshev polynomials of the first kind 
%Following line added by Soumik
on the interval $[-1,1]$.
We define a set of polynomials
\begin{align}\label{eq:whatisgamma}
  \Gamma_0(x) & = 1~, \\
  \Gamma_{2k}(x) &= 2 T_{2k}\left (\frac{x}{2} \right ) + \frac{2d-2}{(2d-1)^k}~,~~\forall ~k \geq 1~, \\
  \Gamma_{2k+1}(x) &= 2 T_{2k+1}\left(\frac{x}{2}\right)~, ~~\forall ~k \geq 0~.
\end{align}
We note that much of the following proposition
can be found in Lemma~10.4 of \cite{friedmanalon}.
\begin{prop}\label{prop:eigenvaluewalks}
  Let $A_n$ be the adjacency matrix of $G_n$, and
  % direction reversed by Soumik in keeping with tradition and Elliot
  let $\lambda_1\ge\cdots\ge\lambda_n$ be the eigenvalues
  of $(2d-1)^{-1/2}A_n$.  Then
  \begin{align*}
    \N[n]{k}:=\sum_{i=1}^n\Gamma_k(\lambda_i)&=(2d-1)^{-k/2}\CNBW[n]{k}.
  \end{align*}
\end{prop}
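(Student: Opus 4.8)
The plan is to realize $\CNBW[n]{k}$ as the trace of a power of the non-backtracking (Hashimoto) operator of $G_n$ and then diagonalize that operator against the adjacency spectrum; the combinatorial correction terms will come out exactly matching the definition of $\Gamma_k$. To set up the operator, let $\Ee$ be the set of oriented \emph{labeled} edges of $G_n$: pairs $(i,w)$ with $i\in[n]$ and $w$ one of $\pi_1^{\pm1},\dots,\pi_d^{\pm1}$, with source $i$, target $w(i)$, and reverse $(w(i),w^{-1})$, so that $|\Ee|=2dn$. Let $W$ act on $\CC^{\Ee}$ by $W_{(i,w),(i',w')}=\one\big[w(i)=i',\ w'\ne w^{-1}\big]$. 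A closed walk of length $k$ in this directed graph is precisely a based closed walk on $G_n$ whose word is cyclically reduced, so $\CNBW[n]{k}=\tr(W^k)$ straight from the definitions; this is consistent with the decomposition $\CNBW[n]{k}=\sum_{j\mid k}2j\,\Cy[n]{j}+\BadW[n]{k}$ used in Section~\ref{sec:nbw}. Working with labeled edges makes self-loops ($\pi_j(i)=i$) and parallel edges harmless — a self-loop simply contributes the two mutually reverse edges $(i,\pi_j)$, $(i,\pi_j^{-1})$ — and gives the clean count $|E(G_n)|=dn$.

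The second step is to invoke the non-backtracking spectral correspondence, which is essentially the content of Lemma~10.4 of \cite{friedmanalon} and equivalently follows from Bass's determinant identity $\det(I-tW)=(1-t^2)^{|E|-|V|}\det\!\big(I-tA_n+(2d-1)t^2I\big)$ (itself provable from the source/target incidence matrices $S,T$ and the reversal involution $J$ via $W=T^{t}S-J$, $SJ=T$, $TJ=S$). Writing $\mu_i=(2d-1)^{1/2}\lambda_i$ for the adjacency eigenvalues, it says that the $2dn$ eigenvalues of $W$ are the $n$ pairs $\alpha_i,\beta_i$ with $\alpha_i+\beta_i=\mu_i$, $\alpha_i\beta_i=2d-1$, together with $+1$ and $-1$, each of multiplicity $|E(G_n)|-|V(G_n)|=(d-1)n$. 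Taking traces,
\[
\CNBW[n]{k}=\sum_{i=1}^{n}\big(\alpha_i^{k}+\beta_i^{k}\big)+(d-1)n\big(1+(-1)^{k}\big).
\]

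The third step converts the power sums to Chebyshev polynomials. Since $\alpha_i\beta_i=2d-1$, write $\alpha_i=(2d-1)^{1/2}e^{\theta_i}$ and $\beta_i=(2d-1)^{1/2}e^{-\theta_i}$ with $\theta_i\in\CC$; then $\alpha_i+\beta_i=2(2d-1)^{1/2}\cosh\theta_i=(2d-1)^{1/2}\lambda_i$ forces $\cosh\theta_i=\lambda_i/2$, whence $\alpha_i^{k}+\beta_i^{k}=2(2d-1)^{k/2}\cosh k\theta_i=2(2d-1)^{k/2}T_k(\lambda_i/2)$, a polynomial identity in $\lambda_i$. Dividing the trace formula by $(2d-1)^{k/2}$ gives
\[
(2d-1)^{-k/2}\CNBW[n]{k}=\sum_{i=1}^{n}2\,T_k(\lambda_i/2)+\big(1+(-1)^{k}\big)\frac{(d-1)n}{(2d-1)^{k/2}}.
\]
For odd $k$ the last term vanishes and the right side is $\sum_i 2T_k(\lambda_i/2)=\sum_i\Gamma_k(\lambda_i)$ by the definition $\Gamma_{2j+1}(x)=2T_{2j+1}(x/2)$. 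For $k=2j$ with $j\ge1$ the last term equals $n(2d-2)/(2d-1)^{j}$, so distributing the constant $\tfrac{2d-2}{(2d-1)^{j}}$ over the $n$ eigenvalues gives $\sum_i\big(2T_{2j}(\lambda_i/2)+\tfrac{2d-2}{(2d-1)^{j}}\big)=\sum_i\Gamma_{2j}(\lambda_i)$. Either way this is the asserted identity $\N[n]{k}=(2d-1)^{-k/2}\CNBW[n]{k}$.

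The real content is the spectral correspondence of the second step; everything else is bookkeeping. The main thing to get right is the combinatorial set-up of the first step together with the multiplicity $|E|-|V|=(d-1)n$ in the presence of loops and multiple edges — but, as noted, the labeled oriented-edge picture natural to the permutation model disposes of these automatically, and nothing in the argument cares whether $d$ is fixed or growing.
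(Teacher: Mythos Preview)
Your proof is correct but follows a genuinely different route from the paper's. The paper proceeds through the \emph{non-cyclically} non-backtracking walk count $\NBW[n]{k}$: it quotes the identity $(2d-1)^{-k/2}\NBW[n]{k}=\sum_i p_k(\lambda_i)$ with $p_k(x)=U_k(x/2)-(2d-1)^{-1}U_{k-2}(x/2)$ from \cite{ABLS}, then derives a combinatorial recursion $\NBW[n]{k}=\CNBW[n]{k}+(2d-1)\NBW[n]{k-2}-\CNBW[n]{k-2}$ by counting how a non-backtracking but not cyclically non-backtracking walk is obtained by ``adding a tail'' to a shorter one. Iterating expresses $\CNBW[n]{k}$ as an alternating combination of $\NBW[n]{k-2j}$, and a Chebyshev identity collapses the corresponding combination of the $p_{k-2j}$ to $\Gamma_k$.

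You instead go straight through the Hashimoto operator $W$ and the Ihara--Bass spectral correspondence, reading off $\CNBW[n]{k}=\tr W^k$ and the eigenvalues of $W$ in terms of the adjacency spectrum plus the $\pm 1$ block of multiplicity $(d-1)n$. This is more structural and avoids the intermediate NBW count and the tail recursion entirely; the constant $(2d-2)/(2d-1)^{k/2}$ in $\Gamma_{2k}$ emerges transparently from the $\pm 1$ eigenvalues rather than from telescoping. The paper's route is a bit more elementary in that it only needs the three-term recurrence underlying the $p_k$ formula rather than the full Bass determinant identity, but yours is cleaner and makes the Chebyshev-of-the-first-kind structure appear in one step. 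Your care with the labeled oriented-edge setup, so that $|E|-|V|=(d-1)n$ holds even with loops and multiple edges, is exactly the point one needs to check for the permutation model.
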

\begin{proof}
To show the above, we will first use the Chebyshev polynomials of the second kind on $[-1,1]$, namely, $\{U_n\}_{n \in \mathbb{N}}$. 

  Let 
  \begin{align}\label{eq:orthogKM}
    p_k(x)=U_k\left(\frac{x}{2}\right)
     - \frac{1}{2d-1}U_{k-2}\left(\frac{x}{2}\right).
  \end{align}
  It is known  \cite[eqn.~12]{ABLS} that
    $(2d-1)^{-k/2}\NBW[n]{k}=\sum_{i=1}^np_k(\lambda_i)$.
    We thus proceed by relating $\CNBW[n]{k}$ to $\NBW[n]{k}$.
  
  A closed non-backtracking walk of length $k$ is either cyclically
  non-backtracking or can be obtained from a closed non-backtracking
  walk of length $k-2$ by ``adding a tail,'' i.e., adding a new step
  to the beginning of the walk and its reverse to the end.
  For any closed cyclically non-backtracking walk of length
  $k-2$, we can add a tail  
   in $2d-2$ ways.
  For any closed non-backtracking walk of length $k-2$ that is not cyclically
  non-backtracking, we can add a tail in $2d-1$ ways.  
  Hence for $k\geq 3$,
  \begin{align*}
    \NBW[n]{k}&=\CNBW[n]{k}+(2d-2)\CNBW[n]{k-2}+(2d-1)
      \left(\NBW[n]{k-2}-\CNBW[n]{k-2}\right)\\
      &= \CNBW[n]{k}+(2d-1)\NBW[n]{k-2}-\CNBW[n]{k-2}.
  \end{align*}
  Applying this relation iteratively
  and noting that $\CNBW[n]{k}=\NBW[n]{k}$ for $k=1,2$, we have
  \begin{align*}
    \CNBW[n]{k} =
      \NBW[n]{k} - (2d-2)
      \left(\NBW[n]{k-2}+\NBW[n]{k-4}+
      \cdots+\NBW[n]{a}\right)
  \end{align*}
  with $a=2$ if $k$ is even and $a=1$ if $k$ is odd.
 Observe now that
  \begin{align*}
    \Gamma_{2k}(x) &= p_{2k}(x)-(2d-2)\left(\frac{p_{2k-2}(x)}{2d-1}
     +\frac{p_{2k-4}(x)}{(2d-1)^2}+\cdots+\frac{p_{2}(x)}{(2d-1)^{k-1}}\right),\\
     \intertext{and}
    \Gamma_{2k-1}(x) &= p_{2k-1}(x)-(2d-2)\left(\frac{p_{2k-3}(x)}{2d-1}
     +\frac{p_{2k-5}(x)}{(2d-1)^2}+\cdots+
     \frac{p_{1}(x)}{(2d-1)^{k-1}}\right)~.
  \end{align*}
A quick calculation shows now that 
\begin{eqnarray*}
\Gamma_{2k}(x) & = & U_{2k} \left( \frac{x}{2} \right ) - U_{2k-2} \left (\frac{x}{2} \right) + \frac{2d-2}{(2d-1)^{k/2}}~, ~~\mbox{while}\\
\Gamma_{2k+1}(x) & = & U_{2k+1} \left ( \frac{x}{2} \right ) - U_{2k-1} \left ( \frac{x}{2} \right )~,
\end{eqnarray*}
and the rest follows from the fact that $T_{k}(x) = \frac{1}{2} \left (U_{k}(x)- U_{k-2}(x) \right)$. 
\end{proof}
\bigskip  
  
The weak convergence of the sequence $(\CNBW[n]{k},\; 1\le k \le r_n)$ in Theorem \ref{thm:dfixedtight} allows us to establish limiting laws for a general class of linear functions of eigenvalues. First we will make some canonical choices of parameters $\{r_n\}$. Define
%start modification by Soumik, 20110830
\eq\label{eq:choosern}
r_n = \frac{\beta\log n}{\log(2d-1)}, \quad \text{for some $\beta < 1/2$.}
\en
Note that $2r_n \log (2d-1) = 2\beta \log n$, which shows \eqref{eq:2d}, even when $d$ grows with $n$. 

We now need another definition. Let $h$ be a function on $\RR$ such that 
\eq\label{eq:whatish}
h(r_n) \ge \log (2d-1), \quad \text{for all large enough $n$.}
\en
%Informally, if we consider $r$ and $d$ as functions, then $h\ge
%\log(2 d\circ r^{-1} - 1)$. 
This definition is not so important when $d$ is fixed, since a
constant $h(x)\equiv\log(2d-1)$ for all $x\in \RR$ is a good choice.
%long sentence broken by Soumik
However, when $d$ grows with $n$, an appropriate choice needs to be made. For example when $2d-1= (\log
n)^\gamma$ for some $\gamma >0$,  one may take 
\eq\label{eq:computehk}
h(x) = C\log x,\quad \text{for some large enough positive constant $C$}.   
\en

%The following is our main result for $d$ fixed. 
%%Ioana made extensive changes in the following part

For our next result, we will use some theorems from Approximation
Theory. Recall that every function $f$ on $[-1, 1]$ which is square-integrable with respect to
the 
% changed by Soumik
arc-sine law has a series expansion with respect to the
Chebyshev polynomials of the first kind. Good references for
approximation theory and the Chebyshev polynomials are the book
\cite{MH} and the (yet unpublished) book \cite{ATAP11}. 

Recall the polynomials $\Gamma_k(x)$ as defined in \eqref{eq:whatisgamma}; if a function
has a series expansion in terms of Chebyshev polynomials of the first
kind, $T_k(x)$, on $[-1,1]$, then it has a series expansion in terms
of $\Gamma_k(x)$ on $[-2, 2]$. 

We recall the definition of a Bernstein ellipse of radius $\rho$.

%%%% Ioana changed the def slightly to eliminate one line
%%20120427 Elliot:  defining the Bernstein ellipse for $1 > \rho > 0$ is fine, but then the sum of the axes will be $1 / \rho.$
\begin{defn}
Let $\rho>1$, and let $\mathcal{E}_B(\rho)$ be the image of the circle of radius $\rho$,
centered at the origin, under the map $f(z) = \frac{z+ z^{-1}}{2}$. We
call $\mathcal{E}_B(\rho)$ the Bernstein ellipse of radius $\rho$. The ellipse has foci at $\pm 1$, and the sum
of the major semiaxis and the minor semiaxis is exactly
$\rho$. 
\end{defn}

%%Ioana broke up main result into a lemma and a theorem 09/15

To prove our main  result for $d$ fixed, we first need a lemma. 

\begin{lemma} \label{lem:prelim_dfixed}
% 20110923 Toby added ``\geq 2'' to the following line
Suppose that $d\geq 2$ is fixed. Let $f$ be a function defined on
$\mathbb{C}$ which is analytic inside a Bernstein ellipse of radius
$2\rho$, where $\rho = (2d-1)^{\alpha}$, for some $\alpha>2$, and
such that $|f(z)| < M$ inside this ellipse. 

Let $f(x) = \sum_{i=0}^{\infty} c_i \Gamma_i(x)$ for $x$ on $[-2,2]$
(the existence, as well as uniform convergence of the series on $[-2,
2]$,  is guaranteed  by the fact that $f$ is analytic on $[-2,2]$). 

Then the following things are true:
\begin{itemize}
\item[(i)] The expansion of $f(x)$ in terms of $\Gamma_i(x)$ actually
  converges uniformly on $[-2-\epsilon, 2+ \epsilon]$ for some small enough
$\epsilon>0$. 
\item[(ii)] The aforementioned series expansion also converges pointwise on
  $[2, \frac{2d}{\sqrt{2d-1}}]$. 
\item[(iii)] If $f_k :=\sum_{i=0}^k c_i \Gamma_i$ is the $k$th truncation
  of this (modified) Chebyshev series for $f$, then, for a small enough $\epsilon$, 
\[
\sup_{0\le\abs{ x} \le 2+\epsilon} \abs{f(x) - f_k(x)} \le M' \left(2d-1\right)^{-\alpha' k}~,
\]
where $2<\alpha'<\alpha$, and $M'$ is a constant independent of $k$. 
\item[(iv)] For all $k \in \mathbb{N}$, let $b_k = \frac{1}{(2d-1)^k}$,
  and let $\omega_k$ be the sequence of weights described in Theorem
  \ref{thm:dfixedtight}. Then the sequence of coefficients $\{c_k\}_{k
    \in \mathbb{N}}$ satisfies
\[
\left( \frac{c_k}{(2d-1)^{k/2} \omega_k} \right)_{k\in \NN} \in \ltwo(\weight)~.
\]
\end{itemize}
\end{lemma}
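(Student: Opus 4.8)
The plan is to establish the four claims in sequence, since (i) feeds into (iii), which together with analyticity estimates controls everything, while (ii) and (iv) are comparatively mechanical. All four rest on the classical fact from approximation theory that if $f$ is analytic inside the Bernstein ellipse $\mathcal{E}_B(2\rho)$ with $|f|<M$ there, then its Chebyshev coefficients (with respect to $T_k(x/2)$ on $[-2,2]$) decay like $O(M\rho^{-k})$; see \cite{ATAP11} or \cite{MH}. Since $\Gamma_k$ differs from $2T_k(\cdot/2)$ only by the additive constant $(2d-2)/(2d-1)^{k/2}$ when $k$ is even (and equals it when $k$ is odd), the coefficients $c_k$ in the $\Gamma_k$-expansion are, up to a factor of $\tfrac12$ and a correction absorbed into $c_0$, the Chebyshev coefficients, hence satisfy $|c_k| \le K M (2d-1)^{-\alpha k}$ for a universal $K$ and all $k\ge 1$.

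For (i): a series $\sum c_k \Gamma_k(x)$ with $|c_k|\le KM(2d-1)^{-\alpha k}$ and $\alpha>2$ converges uniformly on any set where $|\Gamma_k(x)|$ grows subexponentially at rate slower than $(2d-1)^{\alpha}$. On $[-2-\epsilon,2+\epsilon]$ we have $|T_k(x/2)| \le \cosh(k\,\mathrm{arccosh}((2+\epsilon)/2))$, which grows like $\rho_\epsilon^k$ where $\rho_\epsilon \to 1$ as $\epsilon\to 0$; choosing $\epsilon$ small enough that $\rho_\epsilon < (2d-1)^{\alpha/2}$ (say), the series converges uniformly and absolutely there, and its sum is the analytic continuation of $f$. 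For (iii): the tail is $\sum_{i>k} c_i\Gamma_i(x)$, bounded by $\sum_{i>k} KM(2d-1)^{-\alpha i}\cdot C\rho_\epsilon^i = O\big((2d-1)^{-\alpha k}\rho_\epsilon^k\big)$ when $\rho_\epsilon<(2d-1)^\alpha$, which we can rewrite as $M'(2d-1)^{-\alpha' k}$ for any $2<\alpha'<\alpha$ once $\epsilon$ is small enough, with $M'$ independent of $k$ (it depends on $M$, $d$, $\alpha$, $\alpha'$). For (ii): the point $x\in[2, 2d/\sqrt{2d-1}]$ corresponds under $x/2 = \cos\theta$ (or rather $\cosh$) to a parameter bounded by $\mathrm{arccosh}(d/\sqrt{2d-1})$; one checks $d/\sqrt{2d-1} < (2d-1)^{\alpha/2}$ comfortably (indeed $d/\sqrt{2d-1}\le \sqrt{2d-1}$ for $d\ge 1$), so $|T_k(x/2)| = O((2d-1)^{k/2})$ there, and $(2d-1)^{k/2}\cdot(2d-1)^{-\alpha k}$ is summable since $\alpha>2$ gives $\alpha > 1/2$; the constant term corrections are trivially summable too, so the series converges pointwise (in fact absolutely) on that interval.

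For (iv): with $\omega_k = b_k/\Theta_k$ and $b_k = (2d-1)^{-k}$, and $\Theta_k = \E[\CNBW{k}]^2 \asymp (2d-1)^{2k}$ (from $\Theta_k = \sum_{j|k}2ja(d,j) + (\sum_{j|k}a(d,j))^2$ and \eqref{eq:adkbounds}), we get $\omega_k \asymp (2d-1)^{-3k}$. We must check $\sum_k \left(c_k/((2d-1)^{k/2}\omega_k)\right)^2 \omega_k = \sum_k c_k^2 (2d-1)^{-k}\omega_k^{-1} < \infty$. Plugging in $|c_k|\le KM(2d-1)^{-\alpha k}$ and $\omega_k^{-1}\asymp(2d-1)^{3k}$, the general term is $O\big((2d-1)^{-2\alpha k}(2d-1)^{-k}(2d-1)^{3k}\big) = O\big((2d-1)^{(2-2\alpha)k}\big)$, which is summable precisely because $\alpha>1$ (and we have $\alpha>2$). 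Hence the sequence lies in $\ltwo(\weight)$.

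I expect the main obstacle to be (iii): making the $\epsilon$-dependence of the bound explicit and uniform, i.e.\ verifying that one can choose a single small $\epsilon$ (depending only on $d$, $\alpha$, $\alpha'$, not on $k$) so that the geometric-series tail estimate collapses to $M'(2d-1)^{-\alpha' k}$ with $M'$ genuinely independent of $k$. This requires being careful that the growth rate $\rho_\epsilon$ of $|T_k(x/2)|$ on $[-2-\epsilon,2+\epsilon]$ is strictly below $(2d-1)^{\alpha-\alpha'}$ uniformly; since $\rho_\epsilon\to 1$ continuously as $\epsilon\to0^+$ and $(2d-1)^{\alpha-\alpha'}>1$, such an $\epsilon$ exists, but one should state the quantitative relation (e.g.\ $\mathrm{arccosh}(1+\epsilon/2) < (\alpha-\alpha')\log(2d-1)$) to make the argument airtight. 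Everything else is routine bookkeeping with geometric series and the known coefficient-decay estimate for analytic functions.
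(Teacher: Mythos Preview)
Your proposal is correct and follows essentially the same approach as the paper's own proof: both derive the geometric coefficient decay $|c_k|\le M'(2d-1)^{-\alpha k}$ from Bernstein's theorem on Chebyshev coefficients of functions analytic in an ellipse, bound the growth of $T_k(x/2)$ off $[-2,2]$ (the paper via the explicit formula $T_n(x)=\tfrac12\big((x-\sqrt{x^2-1})^n+(x+\sqrt{x^2-1})^n\big)$ to get $|\Gamma_k(x)|\le C(1+3\sqrt{\epsilon})^k$, you via the equivalent $\cosh(k\,\mathrm{arccosh})$ form), and then sum geometric series. Your treatment of (iv) is in fact more explicit than the paper's, which simply says it ``follows easily from the definitions''; and your remark that the $\Gamma_k$-coefficients equal half the Chebyshev coefficients for $k\ge 1$ with the even-$k$ constants absorbed into $c_0$ is a useful clarification the paper omits.
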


\begin{proof} We will prove the facts (i) through (iv) in succession. 

Facts (i) and (ii) will use a particular expression for $T_n(x)$ outside
$[-1,1]$, namely, 
\begin{eqnarray} \label{new_t}
T_n(x) &=& \frac{(x-\sqrt{x^2-1})^n + (x + \sqrt{x^2-1})^n}{2}~.
\end{eqnarray}

For Fact (i), it is easy to see that if $x$ is in $[-2-\epsilon, 2+
\epsilon]$, and particularly for $\epsilon$ small enough,
\[
|\Gamma_k(x)| \leq C (1+ 3 \sqrt{\epsilon})^k~,
\]
 where $C$ is some constant independent of $k$. 

By Theorem 8.1 in \cite{ATAP11}, which first appeared in Section~61 
of \cite{bernstein12c}, it follows that
\begin{eqnarray} \label{bound_on_cs}
|c_k| \leq M' (2d-1)^{-\alpha k}~,
\end{eqnarray}
for some constant $M'$ which may depend on $M$ and $d$, but not on
$k$. 

Note that $1+ 3\sqrt{\epsilon} < (2d-1)^{\alpha}$, for any $d\geq 2$,
$\alpha>2$, and $\epsilon$ small enough. 

Consequently, the series $\sum_{k=0}^{\infty} c_k \Gamma_k(x)$ is
absolutely convergent on $[-2-\epsilon, 2+ \epsilon]$, and hence the
expansion of $f$ into this modified Chebyshev series is valid (and
absolutely convergent) on $[-2-\epsilon, 2+ \epsilon]$. This proves
Fact (i). 

Similarly, we now look on the interval $[2, \frac{2d}{\sqrt{2d-1}}]$,
and note that on that interval the expression for $T_n(x/2)$ will be
bounded from above by 
\[
|T_n(x/2)| < \frac{1+ (2d-1)^{n/2}}{2}~;
\]
indeed, this happens because $x/2 - \sqrt{x^2/4-1}$ is decreasing (and
maximally $1$,  at $x=2$) while $x/2+\sqrt{x^2/4-1}$ is increasing
(and maximally $(2d-1)^{n/2}$, at $x=2d/\sqrt{2d-1}$).

From here it follows once again that 
\[
|\Gamma_n(x)| \leq 2 (2d-1)^{n/2}~,
\]
on $[2, \frac{2d}{\sqrt{2d-1}}]$, and thus the series $\sum_{k=0}^{\infty} c_k \Gamma_k(x)$ is
absolutely convergent on this interval as well. The equality with the function $f$ follows from analyticity. This proves Fact (ii). 

Fact (iii) is an immediate consequence of \eqref{bound_on_cs}, by taking
$\epsilon$ small enough relative to $d$ and $\alpha$. 

Fact (iv) follows easily from the definitions of $\omega_k$, $\Theta_k$
(given in  Theorem \ref{thm:dfixedtight}), and from \eqref{bound_on_cs}. 
\end{proof}

We can now present our main result for the case when $d$ is fixed. 

\begin{thm}\label{thm:dfixedlinear}
Assume the same conditions on $f$  and notations as in 
Lemma~\ref{lem:prelim_dfixed}. 
Then the random variable $\sum_{i=1}^n f(\lambda_i) - nc_0$ converges in law to the infinitely divisible random variable 
\[
Y_f:=\sum_{k=1}^\infty \frac{c_k}{(2d-1)^{k/2}} \CNBW[\infty]{k}~.
\]
\end{thm}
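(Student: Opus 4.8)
The plan is to recognize $\sum_{i=1}^n f(\lambda_i)-nc_0$ as a fixed continuous linear functional of the CNBW sequence and then to pass to the limit through the weak convergence in Theorem~\ref{thm:dfixedtight}. The starting point is Proposition~\ref{prop:eigenvaluewalks}, which gives $\sum_{i=1}^n\Gamma_k(\lambda_i)=(2d-1)^{-k/2}\CNBW[n]{k}$ for $k\ge1$ and $\sum_{i=1}^n\Gamma_0(\lambda_i)=n$ (as $\Gamma_0\equiv1$). First I would introduce the event $E_n$ on which $|\lambda_i|\le 2+\epsilon$ for every $i\neq1$; by the second-eigenvalue control available for fixed $d$ (Friedman's bound \cite{friedmanalon}, or Theorem~\ref{thm:eigbound}), $\prob[E_n]\to1$. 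On $E_n$, Facts (i) and (ii) of Lemma~\ref{lem:prelim_dfixed} ensure that the series $\sum_{k\ge0}c_k\Gamma_k(\lambda_i)$ converges to $f(\lambda_i)$ for every $i$ — including the deterministic Perron eigenvalue $\lambda_1=2d/\sqrt{2d-1}$, covered by Fact (ii) — so that, the sum over $i$ being finite, the two summations may be interchanged to give, on $E_n$,
\[
\sum_{i=1}^n f(\lambda_i)-nc_0=\sum_{k=1}^\infty c_k(2d-1)^{-k/2}\CNBW[n]{k}=:V_n .
\]
Since $\CNBW[n]{k}\le 2dn(2d-1)^{k-1}$ deterministically and $|c_k|\le M'(2d-1)^{-\alpha k}$ with $\alpha>2$, the series defining $V_n$ converges for every realization, so $V_n$ is a genuine random variable agreeing with the statistic on $E_n$; as $\prob[E_n]\to1$ it suffices to prove $V_n\toL Y_f$.

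Next I would split $V_n=T_n+R_n$ with $T_n=\sum_{k=1}^{r_n}c_k(2d-1)^{-k/2}\CNBW[n]{k}$, choosing $r_n$ as in \eqref{eq:choosern}. For the head $T_n$, set $b_k=c_k(2d-1)^{-k/2}/\omega_k$; Fact (iv) of Lemma~\ref{lem:prelim_dfixed} says exactly that $b=(b_k)_{k\in\NN}\in\ltwo(\weight)$. Writing $X_n=(\CNBW[n]{1},\dots,\CNBW[n]{r_n},0,0,\dots)$ and $X=(\CNBW[\infty]{k})_{k\in\NN}$, one has $\iprod{b,X_n}=T_n$ and $\iprod{b,X}=Y_f$. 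By Theorem~\ref{thm:dfixedtight}, $X_n\toL X$ in $\ltwo(\weight)$, so Lemma~\ref{lem:contimap} gives $T_n=\iprod{b,X_n}\toL\iprod{b,X}=Y_f$. I would also record here that $Y_f$ is a.s.\ finite by Cauchy--Schwarz (using $b,X\in\ltwo(\weight)$) and is infinitely divisible, being the a.s.\ limit of its partial sums, each of which is a finite linear combination of the independent Poisson variables $\Cy{j}$.

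For the tail $R_n=\sum_{k>r_n}c_k(2d-1)^{-k/2}\CNBW[n]{k}$, a crude first-moment bound will do. From $\CNBW[n]{k}=\GoodW[n]{k}+\BadW[n]{k}$, the decomposition \eqref{eq:goodbad}, Proposition~\ref{prop:badwalks}, and \eqref{eq:adkbounds}, one gets $\expect\CNBW[n]{k}=O\bigl(k(2d-1)^k\bigr)$ uniformly for $k\le r_n$ when $n$ is large, so
\[
\expect|R_n|\le\sum_{k>r_n}|c_k|(2d-1)^{-k/2}\expect\CNBW[n]{k}=O\Bigl(\sum_{k>r_n}k(2d-1)^{(1/2-\alpha)k}\Bigr)\to 0
\]
since $\alpha>2>\tfrac12$; hence $R_n\toPr 0$. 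Slutsky's theorem then yields $V_n=T_n+R_n\toL Y_f$, which, combined with the first paragraph, completes the proof.

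I expect the only genuinely delicate point to be the first step rather than the limiting argument: one must ensure that, with probability tending to one, every eigenvalue $\lambda_i$ lies inside the region of pointwise convergence of the modified Chebyshev expansion of $f$, so that the expansion may be evaluated termwise at the $\lambda_i$ and the resulting double sum legitimately rearranged. This is precisely where the quantitative spectral information is used — the bulk eigenvalues via the second-eigenvalue bound and the single Perron eigenvalue $2d/\sqrt{2d-1}$ via Fact (ii) of Lemma~\ref{lem:prelim_dfixed} — and keeping this bookkeeping uniform in $n$ is the heart of the matter; everything else reduces to the continuous-mapping statement in $\ltwo(\weight)$ together with a one-line tail estimate.
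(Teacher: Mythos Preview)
Your overall structure matches the paper's: identify the head $T_n=\sum_{k\le r_n}c_k(2d-1)^{-k/2}\CNBW[n]{k}$ with a continuous linear functional on $\ltwo(\weight)$, invoke Theorem~\ref{thm:dfixedtight} and Lemma~\ref{lem:contimap} through Fact~(iv) to get $T_n\toL Y_f$, and then control the remainder using the spectral gap event. That part is correct and is exactly what the paper does.

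The gap is in your treatment of the tail $R_n$. You derive $\expect\CNBW[n]{k}=O(k(2d-1)^k)$ \emph{for $k\le r_n$} via the good/bad decomposition and Proposition~\ref{prop:badwalks}, and then apply it in the very next line to the sum over $k>r_n$. Proposition~\ref{prop:badwalks} needs $n\ge 2k$ and in fact $k^2=o(n)$ for the bound to stay $O((2d-1)^k)$; once $k$ is of order $\sqrt n$ or larger the bad-walk estimate blows up, and for general $k$ you only have the deterministic $\CNBW[n]{k}\le Cn(2d-1)^k$. With that extra factor of $n$, the tail sum becomes $O\big(n^{1+\beta(1/2-\alpha)}\big)$, which requires $\beta>1/(\alpha-\tfrac12)>\tfrac12$ to vanish and so is incompatible with the standing constraint $\beta<\tfrac12$ from \eqref{eq:choosern}. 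In short, the first-moment route as written does not close.

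The fix is already in your hands: you set up $E_n$ precisely so that on it $V_n$ equals the linear statistic, hence on $E_n$
\[
R_n=V_n-T_n=\sum_{i=1}^n\big(f(\lambda_i)-f_{r_n}(\lambda_i)\big).
\]
Now bound the $i\ge 2$ terms by Fact~(iii) of Lemma~\ref{lem:prelim_dfixed}, giving at most $(n-1)M'(2d-1)^{-\alpha' r_n}=O(n^{1-\alpha'\beta})\to 0$ once $\beta$ is chosen in $(1/\alpha',1/2)$ (possible since $\alpha'>2$), and handle $i=1$ by Fact~(ii). This is exactly how the paper controls the remainder, and it replaces your moment argument without any new ingredients.
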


%%% 20120507 Toby edited the following remark
\begin{rmk} There is a good explanation of why we must subtract 
  $nc_0$ in the statement of the above theorem.  Consider the
  Kesten-McKay density, normalized to have
  support $[-2,2]$:
  \begin{align*}
    \rho_{2d}(x) &= \frac{2d(2d-1)\sqrt{4-x^2}}{2\pi(4d^2-(2d-1)x^2)}.
  \end{align*}
  It is proved in \cite{mckay81} that in the uniform model
  of random $d$-regular graph, the random variable
  $n^{-1}\sum_{i=1}^nf(\lambda_i)$ converges in probability to 
  $\int_{-2}^2 f(x)\rho_d(x)dx$.
  This also holds for the present model; one can prove it
  by applying the contiguity results of \cite{GJKW}, or by
  using the above theorem to compute that
  $\limn n^{-1}\sum_{i=1}^n \lambda_i^k$ 
  is the $k$th moment of the Kesten-McKay law.
  
  %Added by Soumik on 20120514
   If $\sum_{i=1}^n f(\lambda_i)$ converges in distribution 
   (without subtracting the constant), then $n^{-1}\sum_{i=1}^n f(\lambda_i)$ 
   converges to zero in probability. 
   Thus such a function $f$ must be orthogonal to one in the $\mathbf{L}^2$ space of the Kesten-McKay law.
     It has been shown in \cite[Example 5.3]{SODIN} that the polynomials $(p_k)$, defined in \eqref{eq:orthogKM}, along with the constant polynomial $p_0\equiv 1$ constitute an orthogonal basis for the $\ltwo$ space. The polynomials $(\Gamma_k)$, being linear combinations of $(p_k,\; k\ge1)$, are therefore orthogonal to one in that $\mathbf{L}^2$ space. 
    Hence for any $f$ of Theorem~\ref{thm:dfixedlinear}, the function
   $f - c_0$ is orthogonal to the Kesten-McKay law. 
\end{rmk}

\begin{proof} 
Armed with the results of Lemma \ref{lem:prelim_dfixed}, the proof is simple.

We first claim that
\[
\Y[n]{f}:=\sum_{k=1}^{r_n} c_k \N[n]{k}= \sum_{k=1}^{r_n} \frac{c_k}{(2d-1)^{k/2} \omega_k} \CNBW[n]{k} \omega_k %20110927 Toby changed d to 2d
\]
converges in law to $Y_f$ as $n$ tends to infinity. This follows from Theorem \ref{thm:dfixedtight} and Lemma \ref{lem:contimap} once we show that the sequence
\[ %20110927 Toby changed d to 2d
\left( \frac{c_k}{(2d-1)^{k/2} \omega_k}  \right)_{k\in \NN} \in \ltwo(\weight).
\] 
This is precisely Fact (iv) from Lemma \ref{lem:prelim_dfixed}. 

The result will now follow from Slutsky's theorem once we show that, for any $\delta >0$,
\eq\label{eq:inprobyf}
\lim_{n\rightarrow \infty} \P\left( \abs{\sum_{i=1}^n f(\lambda_i) - nc_0- \Y[n]{f}} > \delta \right) = 0. 
\en

The proof of \eqref{eq:inprobyf} has two components. Choose the parameter $\beta$ in \eqref{eq:choosern} such that $\alpha \beta < 1$. This also implies $\beta < 1/2$. We start by noting 
\[
nc_0+ \Y[n]{f}= \sum_{i=1}^n \sum_{k=1}^{r_n} c_k \Gamma_k(\lambda_i)= f_{r_n}(\lambda_1) + \sum_{i=1}^{n-1} f_{r_n}(\lambda_i).
\]
Recall that the first eigenvalue of $A_n$ is exactly $2d$,
irrespective of $n$. Thus, once we scale $A_n$ by $\sqrt{2d-1}$, by
Fact (ii) from Lemma \ref{lem:prelim_dfixed}, $f_{r_n}\left(\frac{2d}{\sqrt{2d-1}}\right)$ converges as a
  deterministic sequence to $f\left ( \frac{2d}{\sqrt{2d-1}} \right)$. Choose a large enough $n_1$ such that
\[
\abs{f_{r_n}\left(\frac{2d}{\sqrt{2d-1}}\right)- f \left(\frac{2d}{\sqrt{2d-1}}\right)} < \delta/4, \quad \text{for all $n \ge n_1$}.
\]

On the other hand, if we define the event
\[
A_n := \left\{ \abs{\lambda_i}\le 2 + \epsilon,\; \text{for all $i>1$}\right\},  
\]
Theorem 1.1 in \cite{friedmanalon}, shows that $\P\left( A_n\right)
\ge 1- cn^{-\tau}$, for some positive constants $c$ and $\tau$. On
this event, Fact (i) from Lemma \ref{lem:prelim_dfixed}, together with
\eqref{eq:choosern}
%% Ioana: no h here, not needed
% and \eqref{eq:whatish}
, implies that
\[
\sum_{i=2}^{n-1}\abs{f(\lambda_i) - f_{r_n}(\lambda_i)} \le (n-1) M \exp\left( -\alpha r_n \log(2d-1) \right)= M n\exp(-\alpha\beta \log n)= Mn^{-\alpha\beta+1}=o(1). 
\]
Choose a large enough $n_2$ such that the above number is less than $\delta/4$. 

Thus, for all $n \ge \max(n_1, n_2)$, we have
\[
\P\left( \abs{\sum_{i=1}^n f(\lambda_i) - nc_0- \Y[n]{f}} > \delta \right) \le P(A_n^c) =  cn^{-\tau}=o(1).
\]
This completes the proof. 
\end{proof}
%%% 20110927 Toby added the following remark
% Ioana corrected a typo and added a few more words 
\begin{rmk}
  We now take a moment to demonstrate how to compute the limiting distribution
  of $\sum_{j=1}^n\Gamma_k(\lambda_j)$ when $d=1$ using the results
  of \cite{BAD}, and we show that it is consistent with our own results.
  (Though in this paper we focus on $d \geq 2$, our techniques apply for $d=1$, too,
  and prove nearly the same result as Theorem~\ref{thm:dfixedlinear}.)
  Let $M_n$ be a uniform random $n\times n$ permutation
  matrix with eigenvalues $e^{2\pi i \varphi_1},\ldots,e^{2\pi i\varphi_n}$
  on the unit circle. Let $A_n=M_n+M_n^T$ with eigenvalues 
  $\lambda_1,\ldots,\lambda_n$, which satisfy 
  $\lambda_j=2\cos(2\pi\varphi_j)$. 
  We define $f(x)=\Gamma_k(2\cos(2\pi x))=2\cos(2\pi k x)+c_k$,
  where $c_k=0$ when $k$ is odd and $c_k=(2d-2)/(2d-1)^{k/2}$ when $k$
  is even.  Then $\sum_{j=1}^n\Gamma_k(\lambda_j)=\sum_{j=1}f(\varphi_j)$.
  
  Theorem~1.1 of \cite{BAD} gives the characteristic function of the
  limiting distribution $\mu_f$ of 
  $\sum_{j=1}f(\varphi_j)-\E \sum_{j=1}f(\varphi_j)$
  as
  \begin{align*}
    \hat{\mu}_f(t) &=\exp\left(\int(e^{itx}-1-itx)dM_f(x)\right)
  \end{align*}
  with $M_f$ given by
  \begin{align*}
    M_f &= \sum_{j=1}^{\infty}\frac{1}{j}\delta_{jR_j(f)},\\
    R_j(f) &= \frac{1}{j}\sum_{h=0}^{j-1}f\left(\frac{h}{j}\right)-\int_0^1
      f(x)dx.
  \end{align*}
  It is straightforward to calculate that
  \begin{align*}
    R_j(f)=\begin{cases}2&\text{if $j|k$,}\\0&\text{otherwise.}
    \end{cases}
  \end{align*}
  Thus we find
  \begin{align*}
    \hat{\mu}_f(t)&=\exp\left(\sum_{j|k}\frac{1}{j}(e^{2itj}-1)-2it)\right),
  \end{align*}
  which is the characteristic function of $\CNBW{k}-\E\big[\CNBW{k}\big]$
  for $d=1$ (note that $a(d,k)=2$ in this case).
\end{rmk}

%%%% More changes enacted by Ioana on Sept. 15
%%%% d = d_n everywhere in this part now

Finally,  we consider now the case of growing degree $d = d_n $ and
the relationship between $d_n$ and $r_n$, as given in the statement of
Theorem \ref{thm:dgrowstight} and in \eqref{eq:choosern}. Although we have chosen not to use the
notation $d_n$ elsewhere in the paper, we will use it here, to
emphasize each pair $(d_n, r_n)$. For our results to be applicable, we
will need that both $d_n$ \emph{and} $r_n$ grow to $\infty$. 

We will first remove the dependence on $d_n$ for our orthogonal
polynomial basis, making them scaled Chebyshev.  Define
\begin{align}\label{eq:whatisphi}
  \Phi_0(x)&=1,\\
  \Phi_{k}(x) &= 2 T_k \left ( \frac{x}{2} \right )~, \quad k \ge 1.
%U_k\left(\frac{x}{2}\right)-U_{k-2}\left(\frac{x}{2}\right).
\end{align}
If $A_n$ is the adjacency matrix of $G_n$ and
$\lambda_1\geq\cdots\geq\lambda_n$ are the eigenvalues
of $(2d_n-1)^{-1/2}A_n$ and $k \ge 1$, then
%k changed to 1 by Soumik
\begin{align*}
  \sum_{i=1}^n\Phi_k(\lambda_i) &= 
    \begin{cases}
      (2d_n-1)^{-k/2}\left(\CNBW[n]{k}
         - (2d_n-2)n\right) &\text{if $k$ is even,}\\
      (2d_n-1)^{-k/2}\CNBW[n]{k} & \text{if $k$ is odd.}
    \end{cases}
\end{align*}

Please note from \eqref{eq:whatisntilde} that
  \begin{align*}
    \Nc[n]{k} &= \begin{cases}
      \sum_{i=1}^n\Phi_k(\lambda_i)
      -(2d_n-1)^{-k/2}\big(\mu_k(d_n)-(2d_n-2)n\big)&\text{if $k$ is even}\\
      \sum_{i=1}^n\Phi_k(\lambda_i)
      -(2d_n-1)^{-k/2}\mu_k(d_n)&\text{if $k$ is odd}\end{cases}
  \end{align*}
  
  %%% 20110916a Toby added nonbreaking spaces
  Our final result is very similar in spirit to 
  Theorem~ \ref{thm:dfixedlinear}, and we will need a helpful tool like
  Lemma~\ref{lem:prelim_dfixed} to make it work. 

\begin{lemma} \label{lem:prelim_dgrows}
Suppose now that $d_n$, $r_n$ are growing
    with $n$ and governed by \eqref{eq:choosern}. Consider the polynomials $\Phi_k$ as in
    \eqref{eq:whatisphi}. Let $f$ be an entire function on
    $\mathbb{C}$. Let $a>1$ be a fixed real number. Then 
\begin{itemize} \item[(i)] $f$ admits an absolutely convergent (modified) Chebyshev series
    expansion 
\[
f(x) = \sum_{i=0}^{\infty} c_i \Phi_i(x)
 \]
on $[-a, a]$;
\item[(ii)]  for some choice of weights $\weight=(b_k/k^2\log
k)_{k\in \NN}$ from Theorem \ref{thm:dgrowstight}, the sequence of
coefficients $(c_k)_{k\in \NN}$ satisfies 
 \begin{eqnarray} \label{coeff_growth}
\left( \frac{c_k}{\omega_k}  \right)_{k\in \NN}  &\in& \ltwo\left(
  \weight \right).
\end{eqnarray}
\end{itemize}

\end{lemma}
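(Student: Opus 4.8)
The plan is to mimic the proof of Lemma~\ref{lem:prelim_dfixed}, exploiting that an entire function is analytic inside \emph{every} Bernstein ellipse, and hence has modified Chebyshev coefficients decaying faster than any geometric rate. All the steps are routine once this is observed.

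For part (i), I would set $g(y)=f(2y)$, an entire function, so that $f(x)=g(x/2)$; expanding $g$ in ordinary Chebyshev polynomials of the first kind on $[-1,1]$, $g(y)=\sum_{k\ge 0}a_kT_k(y)$, a comparison with \eqref{eq:whatisphi} identifies $c_0=a_0$ and $c_k=a_k/2$ for $k\ge 1$. Fix $\rho>1$ large enough that $\rho+\rho^{-1}>\max\{a,2\}$. Since $g$ is entire, $M_\rho:=\sup_{z\in\mathcal{E}_B(\rho)}\abs{g(z)}<\infty$, so the Bernstein coefficient bound (Theorem~8.1 of \cite{ATAP11}, originally Section~61 of \cite{bernstein12c}) gives $\abs{c_k}\le M_\rho\rho^{-k}$ for $k\ge 1$ and $\abs{c_0}\le M_\rho$. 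On the other hand, the identity \eqref{new_t} shows that for $x\in[-a,a]$ one has $\abs{\Phi_k(x)}\le 2\rho_a^k$, where $\rho_a=\max\{1,\ a/2+\sqrt{a^2/4-1}\}$, and our choice of $\rho$ guarantees $\rho_a<\rho$ (indeed $\rho_a+\rho_a^{-1}=\max\{2,a\}<\rho+\rho^{-1}$). Since $g$ is analytic in a neighborhood of $[-a/2,a/2]$, its Chebyshev series converges to it there, so $f(x)=\sum_{k\ge 0}c_k\Phi_k(x)$ pointwise on $[-a,a]$, while
\[
\sum_{k\ge 0}\abs{c_k}\,\abs{\Phi_k(x)}\le\abs{c_0}+2M_\rho\sum_{k\ge 1}\left(\tfrac{\rho_a}{\rho}\right)^k<\infty,
\]
which establishes absolute convergence on $[-a,a]$.

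For part (ii), I would use the decay $\abs{c_k}\le M_\rho\rho^{-k}$ from part (i) (for any fixed $\rho>1$) together with the freedom, built into Theorem~\ref{thm:dgrowstight}, to choose the summable sequence $(b_k)$ defining the weights. Taking $b_k=k^{-2}$, which is positive and summable, gives $\omega_k=b_k/(k^2\log k)=1/(k^4\log k)$, whence
\[
\sum_{k\ge 1}\left(\frac{c_k}{\omega_k}\right)^{2}\omega_k=\sum_{k\ge 1}\frac{c_k^2}{\omega_k}=\sum_{k\ge 1}c_k^2\,k^4\log k\le M_\rho^2\sum_{k\ge 1}\rho^{-2k}k^4\log k<\infty,
\]
which is precisely \eqref{coeff_growth}.

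There is no genuine obstacle here; the only points requiring care are the scaling bookkeeping (the factor $2$ in the definition of $\Phi_k$ and the substitution $y=x/2$ relating the interval $[-a,a]$ to $[-a/2,a/2]$ for the standard Chebyshev expansion) and the observation that entireness of $f$ lets us take the geometric decay rate $\rho$ of the coefficients arbitrarily large — this is exactly what absorbs the polynomial factor $k^4\log k/b_k$ in part (ii) and what makes part (i) valid on an arbitrary interval $[-a,a]$ rather than on a fixed one.
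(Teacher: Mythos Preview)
Your proposal is correct and follows essentially the same approach as the paper's own proof: both invoke the Bernstein coefficient bound for $f$ (entire, hence analytic inside an arbitrarily large Bernstein ellipse) to get geometric decay of the $c_k$, and pair this with the growth bound on $T_k(x/2)$ coming from \eqref{new_t} on the fixed interval. The only cosmetic difference is the choice of summable sequence in part~(ii): the paper takes $b_k=2^{-k}$ whereas you take $b_k=k^{-2}$; either choice is absorbed by the super-polynomial decay of the $c_k$.
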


\begin{proof}
Both Facts (i) and (ii) follow in the same way as the proofs of Facts
(i) and (ii) from Lemma \ref{lem:prelim_dfixed}, noting that, since $f$ is entire,  it is sufficient to choose a
Bernstein ellipse of radius large enough. This will provide a fast-enough
decaying geometric bound on the coefficients, to compensate for the
bounds on the growth of the $T_n(x)$ as given by \eqref{bound_on_T_growth}, on the
fixed interval $[-a,a]$. 

We detail a bit more the proof of Fact (ii), since it is only
(slightly) more complex. Choose for example $b_k =
\frac{1}{2^k}$; since $f$ is entire, choose the Bernstein
ellipse of radius $3C$, on which $f$ is bounded by some given $B$;
as in the proof of Theorem \ref{thm:dfixedlinear},  this states that
the coefficients $c_n$ are bounded by
\begin{eqnarray} \label{bound_on_cs_2}
|c_n| &\leq& B' (3C)^{-n}~,
\end{eqnarray}
for some $B'$ independent of $n$.

As before, thanks to the expression \eqref{new_t}, we can bound the
growth of the modified Chebyshev polynomials on $[-C, C]$ by
\begin{eqnarray} \label{bound_on_T_growth}
\max_{x \in [-C, C]} |T_n(x/2)| & \leq & B'' C^n~,
\end{eqnarray}
for some $B''$ independent of $n$. 

With these choices for $\omega$ and $(b_k)_{k \in \mathbb{N}}$,
\eqref{coeff_growth} follows now from \eqref{bound_on_cs_2} and
\eqref{bound_on_T_growth}.
\end{proof}

We can now give our main result for the case when $d_n$ and $r_n$ both
grow. The essential difference from before is in the centering and in assumption (ii) below which stresses the dependence on the growth rate of the degree sequence. 
  
  \begin{thm}\label{thm:dgrowslinear} 
Assume the same setup as in Lemma \ref{lem:prelim_dgrows}, 
%Suppose now that $d_n$, $r_n$ are growing
%    with $n$ and governed by \eqref{eq:choosern}. Consider the polynomials $\Phi_k$ as in
%    \eqref{eq:whatisphi}. Let $f$ be an entire function on
%    $\mathbb{C}$,
%%%%%% Ioana modified the statement of the theorem 
%function from $\RR$ to $\RR$ such that for all points $x\in \RR$, one has the following expansion:
%  \[
%  f(x) = \lim_{k\rightarrow \infty} \sum_{i=1}^k c_i \Phi_i(x), 
%  \]
%  for some sequence of coefficients $(c_i)_{i\in \NN}$. 
 %  Moreover, assume the following 
with the following additional constraints on the entire function $f$:
  
  \begin{enumerate}
  \item[(i)] Let $C:=C(1)$ be chosen according to Theorem
    \ref{thm:eigbound}. 
%Since $f$ is entire, and thus entire on any
%    Bernstein ellipse, by an argument similar to the one made in
%    proving Facts 1, 2 of Theorem \ref{thm:dfixedlinear}, $f$ will admit an
%    absolutely convergent (modified) Chebyshev series
%    expansion 
%\[
%f(x) = \sum_{i=0}^{\infty} c_i \Phi_i(x)
% \]
%on $[-C, C]$. 
Let $f_k := \sum_{i=0}^k c_i \Phi_i$ denote the $k$th
truncation of this series on $[-C, C]$. % Let $C:=C(1)$ be chosen according to Theorem \ref{thm:eigbound}. 
Then %$f$ must satisfy
  \[
  \sup_{0\le \abs{x} \le C} \abs{f(x) - f_k(x)} \le M \exp\left( -\alpha k h(k) \right), \quad \text{for some $\alpha >2$ and $M >0$},
  \]
 where $h$ has been defined in \eqref{eq:whatish}. %%%20110916a fixed typo
 \item[(ii)]  Recall the definition of sequence $(r_n)$ from \eqref{eq:choosern} with a choice of $\beta < 1/\alpha$. % Toby fixed a typo here
Then $f$ and its sequence of truncations, $f_{r_n}$, satisfy
 \[
 \lim_{n\rightarrow \infty} \abs{f_{r_n}\left(2d_n (2d_n-1)^{-1/2} \right) - f\left(2d_n(2d_n-1)^{-1/2}\right)}=0.
 \]
%  \item[(iii)] Consider the weights $\weight=(b_k/k^2\log k)_{k\in \NN}$ from Theorem \ref{thm:dgrowstight}. Assume that the sequence of coefficients $(c_k)_{k\in \NN}$ satisfies 
%  \[
%  \left( \frac{c_k}{\omega_k}  \right)_{k\in \NN} \in \ltwo\left( \weight \right).
%  \]
  \end{enumerate}
   Define now the array of constants % Toby changed some k's to i's
  \[                            % and made the (2d-2)n term apply only sometimes
  m^f_k(n):= \sum_{i=1}^k \frac{c_i}{(2d_n-1)^{i/2}}\left( \mu_i(d_n) - 
  \mathbf{1}_{(\text{$i$ is even})}(2d_n-2)n \right). 
  \] 
 If conditions $(i)$ and $(ii)$ above are satisfied, the sequence of random variables
  \[
  \left(\sum_{i=1}^n f(\lambda_i) - nc_0 - m^f_{r_n}(n)\right)_{n\in \NN}
  \]
  converges in law to a normal random variable with mean zero and variance $\sigma_f^2 = \sum_{k=1}^\infty 2k c_k^2$.
  \end{thm}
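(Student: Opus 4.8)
The proof will closely parallel that of Theorem~\ref{thm:dfixedlinear}, replacing the weak convergence input from Theorem~\ref{thm:dfixedtight} with its growing-degree analog, Theorem~\ref{thm:dgrowstight}, and using Lemma~\ref{lem:prelim_dgrows} in place of Lemma~\ref{lem:prelim_dfixed}. First I would introduce the truncated statistic
\[
\Y[n]{f} := \sum_{k=1}^{r_n} c_k \left( \sum_{i=1}^n \Phi_k(\lambda_i) - (2d_n-1)^{-k/2}\mathbf{1}_{(\text{$k$ even})}(2d_n-2)n \right) = \sum_{k=1}^{r_n} \frac{c_k}{\omega_k}\, \Nc[n]{k}\, \omega_k,
\]
using the identity for $\sum_i \Phi_k(\lambda_i)$ recorded just before Lemma~\ref{lem:prelim_dgrows} together with the definition \eqref{eq:whatisntilde} of $\Nc[n]{k}$. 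By Fact~(ii) of Lemma~\ref{lem:prelim_dgrows}, the coefficient sequence $(c_k/\omega_k)_{k\in\NN}$ lies in $\ltwo(\weight)$, so Theorem~\ref{thm:dgrowstight} (which gives weak convergence of $(\Nc[n]{k};\,k\in\NN)$ to $(Z_k;\,k\in\NN)$ in $X$) combined with Lemma~\ref{lem:contimap} yields that $\Y[n]{f} \toL \sum_{k=1}^\infty c_k Z_k$. Since the $Z_k$ are independent centered normals with $\E Z_k^2 = 2k$ and $\sum_k 2k c_k^2 < \infty$ (again by Fact~(ii), since the weights absorb a $k^2\log k$), the limit $\sum_k c_k Z_k$ is a centered normal with variance $\sigma_f^2 = \sum_{k\ge1} 2k c_k^2$.

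The remaining and only substantive step is to show that the difference between $\sum_{i=1}^n f(\lambda_i) - nc_0 - m^f_{r_n}(n)$ and $\Y[n]{f}$ converges to zero in probability, after which Slutsky's theorem finishes the proof. Expanding, $nc_0 + m^f_{r_n}(n) + \Y[n]{f} = \sum_{i=1}^n f_{r_n}(\lambda_i)$, where $f_{r_n} = \sum_{i=0}^{r_n} c_i \Phi_i$ is the truncated series; so I must control $\big|\sum_{i=1}^n f(\lambda_i) - \sum_{i=1}^n f_{r_n}(\lambda_i)\big|$. I split off the top eigenvalue: $\lambda_1 = 2d_n(2d_n-1)^{-1/2}$ deterministically, and the term $|f(\lambda_1) - f_{r_n}(\lambda_1)|$ tends to $0$ by hypothesis~(ii) of the theorem. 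For the remaining eigenvalues, invoke Theorem~\ref{thm:eigbound} with $m=1$ and the constant $C = C(1)$ appearing in hypothesis~(i): on the event $A_n := \{|\lambda_i| \le C\sqrt{d_n}/\sqrt{2d_n-1} \le C \text{ for all } i \ne 1\}$ — more carefully, $|\lambda_i| \le C$ after rescaling — which holds with probability at least $1 - n^{-1} - K\exp(-cn)$, hypothesis~(i) gives
\[
\sum_{i=2}^n |f(\lambda_i) - f_{r_n}(\lambda_i)| \le (n-1) M \exp(-\alpha r_n h(r_n)) \le M n \exp(-\alpha r_n \log(2d_n-1)) = M n^{1-\alpha\beta} = o(1),
\]
where the middle inequality uses $h(r_n) \ge \log(2d_n-1)$ from \eqref{eq:whatish} and the final equality uses \eqref{eq:choosern} with $\beta < 1/\alpha$. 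Hence on $A_n$ the total discrepancy is $o(1)$, and $\P(A_n^c) \to 0$, giving convergence in probability.

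**Main obstacle.** The only delicate point is bookkeeping the interaction between the three growing parameters $d_n$, $r_n$, and $n$: I must verify that the error bound in hypothesis~(i), namely $M\exp(-\alpha r_n h(r_n))$, genuinely beats the factor of $n$ coming from summing over $\le n$ eigenvalues, which forces the constraint $\beta < 1/\alpha$ in the choice of $r_n$ via \eqref{eq:choosern} — this is exactly why hypothesis~(ii) of Lemma~\ref{lem:prelim_dgrows} and the relation $h(r_n)\ge\log(2d_n-1)$ are needed. One should also double-check that Theorem~\ref{thm:eigbound}'s rescaling (it controls $|\lambda_i|$ for the matrix normalized by $\sqrt{d}$, whereas here we normalize by $\sqrt{2d_n-1}$) still places all $|\lambda_i|$, $i\ne1$, inside the interval $[-C,C]$ on which hypothesis~(i) supplies the approximation bound; since $\sqrt{d}/\sqrt{2d-1} \le 1$ this is immediate, so the estimate $\sup_{|x|\le C}|f(x)-f_{r_n}(x)|$ applies. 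Everything else is routine once Theorem~\ref{thm:dgrowstight} and Lemma~\ref{lem:prelim_dgrows} are in hand.
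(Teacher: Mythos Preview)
Your proposal is correct and follows essentially the same route as the paper's proof: define $\Y[n]{f}=\sum_{k=1}^{r_n}c_k\Nc[n]{k}$, invoke Theorem~\ref{thm:dgrowstight} together with Lemma~\ref{lem:contimap} and Fact~(ii) of Lemma~\ref{lem:prelim_dgrows} to obtain $\Y[n]{f}\toL N(0,\sigma_f^2)$, and then show $\sum_i\bigl(f(\lambda_i)-f_{r_n}(\lambda_i)\bigr)\to 0$ in probability by treating $\lambda_1$ via hypothesis~(ii) and the remaining eigenvalues via Theorem~\ref{thm:eigbound} plus hypothesis~(i). Two small slips worth fixing: your first displayed formula for $\Y[n]{f}$ omits the $\mu_k(d_n)$ centering (the second equality $\sum_k c_k\Nc[n]{k}$ is the correct object, and your later identity $nc_0+m^f_{r_n}(n)+\Y[n]{f}=\sum_i f_{r_n}(\lambda_i)$ only holds for that version), and for $n^{1-\alpha\beta}=o(1)$ you need $\alpha\beta>1$, i.e.\ $\beta>1/\alpha$, not $\beta<1/\alpha$---a sign typo you have inherited from the theorem statement.
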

  
  \begin{rmk} 
Note the significance of the term $h(k)$. The presence of $h(k)$,
which is usually a logarithmic term as in \eqref{eq:computehk},
demands somewhat more than just analyticity of $f$. Similarly,
requirement $(ii)$ requires convergence of the truncations sequence,
evaluated at points diverging to $\infty$; it is a kind of
``diagonal'' convergence, which is not automatically satisfied even
for entire functions.
\end{rmk}

\begin{proof}
The proof is almost identical to the proof of Theorem \ref{thm:dfixedlinear} and we only highlight the slight differences. As before, define
\[
nc_0+ \Y[f]{n}:= \sum_{k=1}^{r_n} c_k \Nc[n]{k} = \sum_{i=1}^n \left(\sum_{k=1}^{r_n} c_k \Phi_k(\lambda_i)\right) - m^f_{r_n}(n). 
\]

To prove that $\Y[f]{n}$ converges in law to $N(0, \sigma_f^2)$, we
use Fact (ii) from Lemma \ref{lem:prelim_dgrows} together with
assumption (ii); by Theorem \ref{thm:dgrowstight}, the convergence follows.
We only need to show that
$$\abs{\sum_{i=1}^{n}f(\lambda_i) - nc_0- \Y[f]{n}}$$ converges to zero in
probability. The convergence for $\lambda_1$ is given by assumption
(ii),  while the rest of it is assured by assumption (i) and Theorem \ref{thm:eigbound}.
\end{proof}  

\section{Appendix}

We will compute the exact expression for the number of cyclically reduced words of length $k$ on letters $\pi_1, \ldots, \pi_d, \pi_1^{-1}, \ldots, \pi_d^{-1};$ specifically, we will show
\begin{lemma}
\label{adk}
\begin{align*}
    a(d,2k) &= (2d-1)^{2k} -1+2d,
    & a(d,2k+1)&= (2d-1)^{2k+1}+1.
\end{align*}
\end{lemma}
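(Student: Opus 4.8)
The plan is to recognize $a(d,k)$ as the trace of the $k$th power of a small combinatorial transfer matrix and then diagonalize that matrix explicitly.

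First I would set up the transfer matrix. Index the coordinates of $\RR^{2d}$ by the $2d$ letters $\pi_1,\dots,\pi_d,\pi_1^{-1},\dots,\pi_d^{-1}$, and let $A$ be the $2d\times 2d$ matrix with $A_{xy}=1$ if $y\neq x^{-1}$ and $A_{xy}=0$ if $y=x^{-1}$. A word $w_1\cdots w_k$ is cyclically reduced precisely when $A_{w_iw_{i+1}}=1$ for every $i$, with indices read cyclically modulo $k$; hence
\[
a(d,k)=\sum_{w_1,\dots,w_k}A_{w_1w_2}A_{w_2w_3}\cdots A_{w_{k-1}w_k}A_{w_kw_1}=\tr\!\left(A^k\right).
\]
Write $A=J-P$, where $J$ is the all-ones $2d\times 2d$ matrix and $P$ is the permutation matrix of the involution $x\mapsto x^{-1}$. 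Since the letters $\pi_i$ and $\pi_i^{-1}$ are distinct symbols, this involution is fixed-point-free, i.e.\ a product of $d$ disjoint transpositions; in particular $P$ is symmetric and $\tr P=0$.

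Next I would diagonalize $A$. One checks directly that $JP=PJ=J$ (each column of $P$ contains a single $1$), so $J$ and $P$ commute and admit a common eigenbasis. The all-ones vector $\mathbf{1}$ satisfies $J\mathbf{1}=2d\,\mathbf{1}$ and $P\mathbf{1}=\mathbf{1}$, hence $A\mathbf{1}=(2d-1)\mathbf{1}$. On the orthogonal complement $\mathbf{1}^{\perp}$, which is invariant under the symmetric matrix $P$, we have $J\equiv 0$, so $A=-P$ there. Because $P$ is a fixed-point-free involution its eigenvalue $+1$ and eigenvalue $-1$ each occur with multiplicity $d$ (from $\tr P=0$ and $P^2=I$); removing the copy of $+1$ spanned by $\mathbf{1}$, the restriction of $P$ to $\mathbf{1}^{\perp}$ has $+1$ with multiplicity $d-1$ and $-1$ with multiplicity $d$. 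Therefore $A$ has eigenvalue $2d-1$ with multiplicity $1$, eigenvalue $+1$ with multiplicity $d$, and eigenvalue $-1$ with multiplicity $d-1$, accounting for all $2d$ eigenvalues.

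Finally I would simply read off the trace:
\[
a(d,k)=\tr\!\left(A^k\right)=(2d-1)^k+d\cdot 1^k+(d-1)(-1)^k .
\]
When $k$ is even this equals $(2d-1)^k+d+(d-1)=(2d-1)^k-1+2d$, and when $k$ is odd it equals $(2d-1)^k+d-(d-1)=(2d-1)^k+1$, which are exactly the two claimed formulas. I do not anticipate a genuine obstacle here; the only step that calls for a little care is the bookkeeping of the $\pm1$-multiplicities of $P$ and tracking where the all-ones eigenvector sits, and the identities $JP=PJ=J$ make that routine.
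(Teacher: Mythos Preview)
Your argument is correct. The transfer-matrix identity $a(d,k)=\tr(A^k)$ with $A=J-P$ is set up properly, the commutation $JP=PJ=J$ is right, and the multiplicity bookkeeping for $P$ (and hence for $A=-P$ on $\mathbf{1}^\perp$) is accurate, so the closed form $a(d,k)=(2d-1)^k+d+(d-1)(-1)^k$ follows and specializes to the two stated cases.

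This is a genuinely different route from the paper's proof. The paper argues by inclusion--exclusion over the set of cyclic positions at which the reduced condition fails: for each $S\subseteq\ZZ/k\ZZ$ it counts words forced to backtrack at the positions in $S$, obtaining $|V_S|=(2d)^{k-|S|}$ for $|S|<k$ with a separate small correction when $|S|=k$, and then sums $\sum_S(-1)^{|S|}|V_S|$ and recognizes a near-binomial expansion. Your spectral approach packages the same recursion into the matrix $A$ and reads the answer off its eigenvalues; it has the minor advantage of producing the single formula $(2d-1)^k+d+(d-1)(-1)^k$ uniformly in the parity of $k$, while the paper's method is slightly more elementary and self-contained, needing no linear algebra at all.
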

\begin{proof}
This is a quick exercise in inclusion-exclusion.  The proof requires some notation, but this should not obscure the simplicity of the ideas.  Define
\[
\Pi_k = \left\{ \pi_1, \pi_2, \ldots, \pi_d, \pi_1^{-1}, \pi_2^{-2} \ldots, \pi_d^{-1}
\right\}^k
\]
to be all words of length $k$ in these letters.  Let $G = \ZZ / k\ZZ$ denote the cyclic group of order $k$, and for any subset $S \subseteq G,$ define
\[
V_S = \left\{
 w = w_0 w_1 \cdots w_{k-1} \in \Pi_k ~\middle\vert~ w_s = w_{s+1}^{-1}~ s \in S
\right\},
\]
where the addition is performed in $G.$  The essential observation is that
\[
|V_S| = \begin{cases}
(2d)^{k - |S|} & k > |S| \\
2d & k = |S|, k~\text{even} \\
0 & k = |S|, k~\text{odd}.
\end{cases}
\]
To see the formula for $k > |S|,$ note that each $w_i$ with $i \neq S$ can be chosen freely from the alphabet.  Moreover, once these are chosen, the word can be completed uniquely by the rules of $V_S.$  The $k = |S|$ formula follow as in these cases, the word must be a single letter that alternates with its inverse, and this is only possible if the length of the word is even.

Having established these formulae, we can compute $a(d,k)$ by inclusion-exclusion,
\[
a(d,k) = \sum_{S \subseteq G} (-1)^{|S|} |V_S|
= \sum_{l=0}^{|S|-1} {k \choose l} (-1)^{l} (2d)^{k-l} + \begin{cases}
2d & k~\text{even} \\
0 & k~\text{odd}.
\end{cases}
\]
Noting that this is nearly the binomial formula, the desired expressions follow.
\end{proof}

 \bibliographystyle{alpha}
 \bibliography{poi-approx}
% \bibliography{perm_cycles}

\end{document}